\crefname{hypothesis}{Hypothesis}{Hypotheses}
\title{$\Gamma$-convergence of Nonlocal Dirichlet Energies With Penalty Formulations of Dirichlet Boundary Data
\thanks{This work was supported by National Natural Science Foundation of China under grant 12071244 and US National Science Foundation 
 DMS-2309245 and DMS-1937254.
}}
\author{Weiye Gan\thanks{Department of Mathematical Sciences, Tsinghua University, Beijing, 100084, China. Email:\textit{gwy22@mails.tsinghua.edu.cn}.}
\and Qiang Du\thanks{Department of Applied Physics and Applied Mathematics, and Data Science Institute, Columbia
University, New York, NY 10027, USA.  Email: \textit{qd2125@columbia.edu} }
\and Zuoqiang Shi\thanks{Yau Mathematical Sciences Center, Tsinghua University, Beijing, 100084, China \& Yanqi Lake Beijing Institute of Mathematical Sciences and Applications, Beijing, 101408, China. Email:\textit{zqshi@tsinghua.edu.cn} (Corresponding author)}}
\newcommand*{\addFileDependency}[1]{
  \typeout{(#1)}
  \@addtofilelist{#1}
  \IfFileExists{#1}{}{\typeout{No file #1.}}
}
\newcommand{\gwy}[1]{{\color{black} #1}}
\newcommand{\GWY}[1]{{\color{black} #1}}
\newcommand{\qd}[1]{{\color{black} #1}}
\begin{document}
	\maketitle
	\begin{abstract}
We study nonlocal Dirichlet energies associated with a class of nonlocal diffusion models on a bounded domain subject to the conventional local Dirichlet boundary condition. {The goal of this paper is to give a general framework to correctly impose Dirichlet boundary condition in nonlocal diffusion model. To achieve this, we formulate the Dirichlet boundary condition as a penalty term and use theory of $\varGamma$-convergence to study the correct form of the penalty term. Based on the analysis of $\varGamma$-convergence, we prove that the Dirichlet boundary condition can be correctly imposed in nonlocal diffusion model in the sense of $\varGamma$-convergence as long as the penalty term satisfies a few mild conditions. This work provides a theoretical foundation for approximate Dirichlet boundary condition in nonlocal diffusion model.} 
	\end{abstract}

\begin{keywords}
nonlocal model; Dirichlet energy; $\Gamma$-convergence; Dirichlet boundary
\end{keywords}

	\section{Introduction}
	\label{sec:introduction}
	Nonlocal models have been extensively 
studied in various scientific disciplines\cite{alfaro2017propagation,bavzant2002nonlocal,blandin2016well,dayal2007real,foss2022convergence,kao2010random, silling2000reformulation,vazquez2012nonlinear}. Among them, nonlocal models with operators that only consider nonlocal interactions of a limited range are of particular interest\cite{du2012analysis,du2019nonlocal
 }, as they are closely related to peridynamic\cite{silling2000reformulation} and some meshless numerical algorithms like smoothed particle hydrodynamics (SPH)\cite{gingold1977smoothed,lucy1977numerical,morris1997modeling}. In this paper, we concentrate on 
nonlocal energies associated with the nonlocal Laplacian 
that are the nonlocal counterpart of
 the local Laplacian. 
	While various theoretical and numerical studies have been devoted to problems associated with nonlocal Laplacians, or also called nonlocal diffusion models
\cite{d2020numerical,du2013nonlocal,mengesha2013analysis,trask2019asymptotically,zhang2018accurate}, establishing nonlocal analogues of the boundary conditions remains 
a topic of ongoing discussion. One approach is to extend the boundary to a small volume adjacent to the boundary, known as volume constraints\cite{du2012analysis}. Designing nonlocal models or volume constraints  properly can achieve better convergence rates to their local limit, such as for Neumann boundary condition in one\cite{tao2017nonlocal} and two dimensions\cite{you2020asymptotically}. \gwy{The point integral method\cite{li2017point,shi2017convergence} is also an effective approach to construct nonlocal 
approximations of the Poisson equation with Neumann boundary condition.} As for the Dirichlet boundary condition, the constant extension method\cite{macia2011theoretical} may be the most straightforward, but it only provides first-order convergence at best. Enforcing the no-slip condition for higher-order convergence may be costly due to the need to calculate particle distances from domain boundaries\cite{holmes2011smooth,yildiz2009sph}. Recently, 
there have been many studies devoted to constructing nonlocal models with respect to the Dirichlet condition
in different implementations\cite{d2022prescription,yang2022uniform,lee2021second,shi2017convergence,shi2018harmonic,zhang2021second,scott2023nonlocal}.
	
	In this paper, we mainly analyze a nonlocal
counterpart of the following well-known Dirichlet energy defined on a bounded domain $\Omega\subset \mathbb{R}^d$
 \begin{equation}
 \label{eq:p-diri}
\int_\Omega|\nabla u(x)|^pdx, \quad p\in (1,\infty)
	\end{equation}
 for functions $u\in W^{1,p}(\Omega)$, subject to Dirichlet boundary conditions on $\partial\Omega$.

 Given a nonlocal horizon parameter $\delta>0$ that controls the range of nonlocal interaction, a commonly studied nonlocal Direclet energy is given by
 	\begin{equation}
		\label{eq:nl-p-diri}
		\frac{1}{\delta^p}\int_\Omega \int_\Omega R_\delta(|x-y|)|u(x)-u(y)|^pdydx.
	\end{equation}
 where $R_\delta(s)=\frac{1}{\delta^d}R\left(\frac{s^2}{\delta^2}\right)$ is a scaled nonlocal kernel.

\gwy{It is well-known that variational problems associated with the minimization of the functional \eqref{eq:p-diri} for a Dirichlet data $a$ being a proper function prescribed on boundary $\partial\Omega$  lead to
the following homogeneous Poisson equation, also called \qd{the p-}Laplace equation or diffusion equation, with the Dirichlet boundary condition:}
	\gwy{\begin{equation}
		\label{eq:p-laplace}
  	\left\{\begin{array}{cc}
			\nabla\cdot(|\nabla u|^{\qd{p-2}}\nabla u)=0&\text{in }\Omega,\\
			u=a&\text{on }\partial\Omega	\end{array}\right.
	\end{equation}}
 For example, for $p=2$, we get the linear Dirichlet boundary value problem
 \begin{equation}
	\label{eq:laplace}
		\left\{\begin{array}{cc}
			\Delta u=0&\text{in }\Omega,\\
			u=a&\text{on }\partial\Omega	\end{array}\right.
	\end{equation}
For widely used kernels, particularly those smoothly defined $R_\delta$, 
functions with a finite nonlocal energy are not expected to have sufficient regularity to have well-defined traces on $\partial\Omega$, thus making it hard to directly impose the Dirichlet boundary condition like that in \eqref{eq:laplace}. A possible remedy is to adopt the technique of heterogeneous localization that leads to improved regularity at the boundary, see \cite{scott2023nonlocal} and the references cited therein.

There are other attempts on imposing Dirichlet boundary condition for the nonlocal diffusion model. \qd{For $p=2$, by} taking  a constant $0<\beta\ll 1$ and using the Robin boundary condition:$$u+\beta\frac{\partial u}{\partial \mathbf{n}}=a,$$
to approximate the Dirichlet boundary condition $u=a$,
a nonlocal model was proposed in \cite{shi2018harmonic} as follows:
\begin{equation*}
   \frac{4}{\delta^2}\int_\Omega R_\delta(|x-y|)(u(x)-u(y))d y-\frac{2}{\beta}\int_{\partial \Omega}\bar{R}_\delta(|x-y|)(a(y)-u(y))d S_y=0
\end{equation*}
where $\bar{R}_\delta(s)=\frac{1}{\delta^d}\bar{R}\left(\frac{s^2}{\delta^2}\right)$ and $\bar{R}(s)=\int_s^{+\infty}R(r)dr$. 
This nonlocal model is proved to converge to the local Laplace equation as $\delta,\beta\rightarrow 0$ \cite{shi2018harmonic} and error estimate in terms of $\delta$ and $\beta$ is also given.
However, the symmetry is destroyed in above nonlocal model such that it does not have variational form with nonlocal Dirichlet energy. By 
introducing $\frac{\partial u}{\partial \mathbf{n}}$ as an auxiliary variable, a nonlocal energy with penalty term can be derived \cite{wang2023nonlocal}.
\begin{align}
\label{eq:nonlocal-diffusion-1}
F(u,a)=&\frac{1}{\delta^2}\int_\Omega\int_\Omega R_\delta(|x-y|)(u(x)-u(y))^2d xd y \\
&+\int_{\partial \Omega} \frac{2}{\delta^2 \bar{\bar{\omega}}_\delta(x)}
\left(\int_\Omega 
\bar{R}_\delta(|x-y|)
(a(x)-u(y)) dy\right)^2dx\nonumber
\end{align}
where $$\bar{\bar{\omega}}_\delta(x)=\int_{\Omega}\bar{\bar{R}}_\delta(|x-y|)dy,$$ $\bar{\bar{R}}_\delta(s)=\frac{1}{\delta^d}\bar{\bar{R}}\left(\frac{s^2}{\delta^2}\right)$and $\bar{\bar{R}}(s)=\int_s^{+\infty}\bar{R}(r)dr$. 
It can be proved that minimal solution of above energy function converges to the solution of \eqref{eq:laplace} as $\delta\rightarrow 0$ and the convergence rate is $O(\delta)$ in $H^1$ norm \cite{wang2023nonlocal}. To get $H^1$ convergence, we need a specifically designed penalty term in \eqref{eq:nonlocal-diffusion-1}. \GWY{Previous researches primarily focus on carefully designing penalty terms to ensure the best possible convergence of nonlocal models. In contrast, in this paper, we are more concerned with identifying the conditions under which the convergence of the nonlocal model can be guaranteed. Since these conditions should be as weak as possible, we are committed to study a more general form of the penalty term.} 

\GWY{Motivated by the penalty formulation of the Dirichlet boundary value problems for ellptic PDEs and nonlocal energy \eqref{eq:nonlocal-diffusion-1}, we first consider the following nonlocal energy with a general penalty
term}\GWY{
\begin{equation}
\label{eq:penalty}
\begin{aligned}
& \frac{1}{\delta^p} \int_\Omega\int_\Omega R_{\delta}(|x-y|)|u(x)-u(y)|^pdxdy +B_p(u,a)
\\
&\qquad\text{where }\;
B_p(u,a) = \int_{\partial\Omega}\left|\frac{1}{\delta}\int_\Omega K_{\delta}(|x-y|)(a(x)-u(y))dy\right|^pdx,
\end{aligned}
	\end{equation}
 and $K_{\delta}$ is a nonlocal kernel that depends on the horizon parameter $\delta$. Our interests is to give correct form of $K_\delta$ such that above nonlocal model converges to the local model with Dirichlet boundary condition as $\delta\to 0$. Since we want to get a general form of $K_\delta$, $\varGamma$-convergence of above nonlocal model is analyzed rather than other strong convergence studied in previous works.}

 In fact, $\varGamma$-convergence among functionals is significant to describe the relationship between nonlocal operators and local ones in semi-supervised learning and other fields \cite{BBM01,slepcev2019analysis,roith2022continuum,garcia2016continuum,ponce2004new}.  With a property of $\varGamma$-convergence, we can also demonstrate the convergence of the minimizers (or solutions of the stationary equations).  Nevertheless, it does not provide any information about the convergence rate of the minimizers. \GWY{Hence, $\varGamma$-convergence is a weaker convergence and allow us to consider more relaxed conditions for the penalty term.}

For $p=2$, our $\varGamma$-convergence result covers the case of linear variational problems associated with the nonlocal energy \eqref{eq:nl-p-diri} with $p=2$ and the
 boundary penalty term of the form
 	\begin{equation}
		\label{eq:example}
		E(u,a)=\frac{1}{\delta^2}\int_{\partial\Omega}\left(\int_{\Omega}K_{\delta}(|x-y|)(u(y)-a(x))dy\right)^2dx
	\end{equation} 
 \GWY{that has been previously considered in \cite{wang2023nonlocal} for some special choices of $K_\delta$ connected to $R_\delta$ (see more details in \cref{remark1}). In this paper, we work with more general choices of $K_\delta$ that enables the $\varGamma$-convergence analysis, which remains valid for more general $p$. The kernel $K_\delta$ only need to fulfill some regularization conditions and do not need to have any connection with the kernel $R_\delta$.} Moreover, as another contribution, 
 we also discuss the convergence of associated  nonlocal eigenvalue problems.   In addition, specializing to linear problems corresponding to $p=2$, we consider a  penalty form  more general than
 that in \eqref{eq:example} to illustrate the broad applicability of the method developed here.

 \GWY{Based on the analysis of $\varGamma$-convergence, we establish a class of penalty models which are guaranteed to be correct approximation of the local Dirichlet boundary condition. This is also the main contribution of this paper.}
	
	The rest of the paper is organized as follows. In \cref{sec: assump and result}, we state the main results and all assumptions that
 we need. \gwy{Some related work considering special cases of our results is discussed in \cref{sec:quant-discussion}}. In \cref{sec:preliminaries}, $\varGamma$-convergence and some estimations employed in our proof procedure are introduced. $\varGamma$-convergence of the nonlocal models and compactness results are demonstrated in \cref{sec:proof} and \cref{sec:compact} respectively. In \cref{sec:conclusion}, we conclude this paper and present several aspects for future research.
	
	\section{Assumptions and main results}
	\label{sec: assump and result}
	Let $p>1$ be a finite constant. Suppose that $\Omega$ is a Lipschitz bounded domain in $\mathbb{R}^d$. $K,R$ are two kernel functions satisfying the following regularity conditions, 
	\begin{enumerate}
		\item[(K1)] $K,R:[0,\infty)\longrightarrow[0,\infty)$ belong to $C^1$,
		\gwy{\item[(K2)] $K,R$ are monotonically decreasing,}
		\item[(K3)] $ \mathrm{supp}(K)\subset[0,r_K^2
  ]$ and $\mathrm{supp}(R)\subset[0,r_R^2]$ 
  for some $r_K,r_R>0$.
	\end{enumerate}
 \qd{For the kernel $R$, we define a normalization constant 
  \begin{equation}\label{eq:sigmaR}
      \sigma_R\coloneqq\int_{\mathbb{R}^d}R(|z|^2)|z\cdot e_1|^pdz
  \end{equation}
that  only depends on a kernel $R$ with $e_1\coloneqq(1,0,\dots,0)$. } 
	For an arbitrary positive constant $\delta$, we also employ the scaled kernels $K_\delta(s)\coloneqq \frac{1}{\delta^d}K(\frac{s^2}{\delta^2})$ and $R_\delta(s)\coloneqq \frac{1}{\delta^d}R(\frac{s^2}{\delta^2})$. We consider the p-Laplace equation with Dirichlet boundary condition \eqref{eq:p-laplace} where $a$ is the trace of some function in $W^{1,p}(\Omega)$. 
 Then the weak solution of \eqref{eq:p-laplace} is also the minimizer of the following functional:
	\gwy{\begin{equation}
		\label{eq:con-fun}
		F(u)=\left\{\begin{array}{cc}
			\int_\Omega|\nabla u(x)|^pdx&\text{if }u\in W^{1,p},Tu=a\text{ on }\partial\Omega,\\
			\infty&\text{otherwise,}
		\end{array}\right.
	\end{equation}
  where $T$ is the trace operator for $W^{1,p}$ function. }
 	We firstly establish a specific nonlocal model for \eqref{eq:con-fun}: 
	\begin{equation}
		\label{eq:dis-fun}
		\begin{aligned}
			F_n(u)&=\frac{1}{\delta_n^p}\int_\Omega\int_\Omega R_{\delta_n}(|x-y|)|u(x)-u(y)|^pdxdy\\
			&+\int_{\partial\Omega}\left|\frac{1}{\delta_n}\int_\Omega K_{\delta_n}(|x-y|)(a(x)-u(y))dy\right|^pdx,
		\end{aligned}
	\end{equation}
 \gwy{and prove the $\varGamma$-convergence from $F_n(u)$ to $\sigma_RF(u)$ as the following theorem:}
	\gwy{\begin{theorem}\label{thm:gamma-con}
		Suppose that $\Omega$ is a Lipschitz bounded domain in $\mathbb{R}^d$. $1<p<\infty$ is a constant. $K,R$ are two kernel functions satisfying (K1)-(K3), \qd{with $\sigma_R$ given by \eqref{eq:sigmaR}}. $\{\delta_n\}$ is a sequence of positive constants tending to 0 as $n\rightarrow\infty$. Then we have
		\[F_n\stackrel{\varGamma}{\longrightarrow}\sigma_RF\quad\text{in }L^p(\Omega),\]
		where $F_n,F$ are defined as \eqref{eq:dis-fun},\eqref{eq:con-fun}.
	\end{theorem}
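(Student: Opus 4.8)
The plan is to verify the two conditions defining $\varGamma$-convergence in $L^p(\Omega)$: the \emph{liminf inequality}, namely $\liminf_{n}F_n(u_n)\ge\sigma_R F(u)$ whenever $u_n\to u$ in $L^p(\Omega)$; and the \emph{existence of a recovery sequence}, namely that for each $u\in L^p(\Omega)$ there is $u_n\to u$ in $L^p(\Omega)$ with $\limsup_n F_n(u_n)\le\sigma_R F(u)$. Write $F_n=A_n+B_n$ with $A_n$ the nonlocal Dirichlet term and $B_n$ the penalty term of \eqref{eq:dis-fun}, and fix an extension $\Phi_a\in W^{1,p}(\Omega)$ of the datum $a$, so that $T\Phi_a=a$.

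\emph{The liminf inequality.} Let $u_n\to u$ in $L^p(\Omega)$. We may assume $L:=\liminf_n F_n(u_n)<\infty$, pass to a subsequence with $F_n(u_n)\to L$ and $\sup_n F_n(u_n)<\infty$, and thin further so that $\|u_n-u\|_{L^p(\Omega)}$ tends to $0$ arbitrarily fast relative to $\delta_n$. From $\sup_n A_n(u_n)<\infty$, the classical lower-semicontinuity and compactness theory for nonlocal Dirichlet energies \cite{BBM01,ponce2004new} (together with the nonlocal estimates of \cref{sec:preliminaries}) gives $u\in W^{1,p}(\Omega)$ and $\liminf_n A_n(u_n)\ge\sigma_R\int_\Omega|\nabla u|^p\,dx$. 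It therefore suffices to prove $Tu=a$: then $F(u)=\int_\Omega|\nabla u|^p\,dx<\infty$, so $L=\lim_n(A_n(u_n)+B_n(u_n))\ge\liminf_n A_n(u_n)\ge\sigma_R F(u)$. For the trace, $\sup_n B_n(u_n)<\infty$ makes the unscaled penalty vanish, $\int_{\partial\Omega}\bigl|\int_\Omega K_{\delta_n}(|x-y|)(a(x)-u_n(y))\,dy\bigr|^p dx\le\delta_n^p\sup_n B_n(u_n)\to0$. Writing the inner integral as $a(x)\omega_{\delta_n}(x)-\int_\Omega K_{\delta_n}(|x-y|)u_n(y)\,dy$ with $\omega_{\delta_n}(x):=\int_\Omega K_{\delta_n}(|x-y|)\,dy$, one shows along a further subsequence, a.e.\ on $\partial\Omega$, that $\omega_{\delta_n}(x)\to\omega(x)\in(0,\infty)$ — the positivity coming from the interior cone condition of the Lipschitz domain — and that $\int_\Omega K_{\delta_n}(|x-y|)u_n(y)\,dy\to\omega(x)\,Tu(x)$. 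For the last limit one first replaces $u_n$ by $u$: the error has $L^p(\partial\Omega)$ norm at most $C\delta_n^{-1/p}\|u_n-u\|_{L^p(\Omega)}$ by a Jensen--Fubini estimate using $\int_{\partial\Omega}K_{\delta_n}(|x-y|)\,dx\le C\delta_n^{-1}\mathbf 1_{\{\mathrm{dist}(y,\partial\Omega)<r_K\delta_n\}}$, hence is negligible by the fast thinning; and then one passes to the limit for the fixed $W^{1,p}$ function $u$ by extending it across $\partial\Omega$ and combining the pointwise limit on a dense class of smooth functions with a uniform nonlocal trace bound. This yields $\omega(x)(a(x)-Tu(x))=0$ a.e., hence $Tu=a$.

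\emph{The recovery sequence.} If $F(u)=\infty$ there is nothing to prove, so let $u\in W^{1,p}(\Omega)$ with $Tu=a$. Since the $\varGamma$-upper limit of $\{F_n\}$ is $L^p$-lower semicontinuous and $v\mapsto\sigma_R\int_\Omega|\nabla v|^p\,dx$ is continuous along $W^{1,p}$-convergent sequences, it suffices to build recovery sequences for $u$ in the dense class $\{u:u-\Phi_a\in C_c^\infty(\Omega)\}$, for which $u$ coincides with the fixed extension $\Phi_a$ near $\partial\Omega$. The naive choice $u_n\equiv u$ fails, since for smooth data one has $\frac{1}{\delta_n}\int_\Omega K_{\delta_n}(|x-y|)(a(x)-u(y))\,dy\to c\,\partial_\nu u(x)$ on $\partial\Omega$ with $c\ne0$, so $B_n(u)\not\to0$. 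Instead, choose scales $\delta_n\ll h_n\ll1$ and, writing $\Omega_h:=\{x\in\Omega:\mathrm{dist}(x,\partial\Omega)<h\}$, set $u_n=u$ outside $\Omega_{h_n}$ and, inside $\Omega_{h_n}$, let $u_n$ interpolate along normals between the normal-constant extension $\hat a(y):=a(\pi y)$ of $a$ (with $\pi$ the nearest-point projection onto $\partial\Omega$) and $u$, using a cutoff $\phi$ with $\phi(0)=1$, $\phi'(0)=0$, $\phi(1)=\phi'(1)=0$. Then $u_n\to u$ in $W^{1,p}(\Omega)$: the extra gradient is supported in the shrinking layer $\Omega_{h_n}$ and controlled by $|\nabla u|$ plus $h_n^{-1}|\hat a-u|$, where $|\hat a(y)-u(y)|$ is bounded by $\mathrm{dist}(y,\partial\Omega)$ times a maximal average of $|\nabla u|$, so its $L^p$ norm tends to $0$. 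Hence $A_n(u_n)\le\bigl(A_n(u)^{1/p}+C\|u_n-u\|_{W^{1,p}(\Omega)}\bigr)^p\to\sigma_R\int_\Omega|\nabla u|^p\,dx$, using the uniform bound $A_n(\varphi)\le C\|\varphi\|_{W^{1,p}(\Omega)}^p$. Finally $B_n(u_n)\to0$: for $x\in\partial\Omega$ the kernel $K_{\delta_n}(|x-\cdot|)$ is supported in $\Omega_{r_K\delta_n}\subset\Omega_{h_n}$, where $|u_n(y)-\hat a(y)|$ is of order $(\delta_n/h_n)^2\delta_n$ times a maximal average of $|\nabla u|$ because $1-\phi(t)=O(t^2)$ near $0$; dividing by $\delta_n$ this contributes $o(1)$, while $\frac{1}{\delta_n}\int_\Omega K_{\delta_n}(|x-y|)(a(x)-a(\pi y))\,dy\to0$ in $L^p(\partial\Omega)$ because the leading, tangential term of $a(x)-a(\pi y)$ integrates to zero over the limiting half-ball by symmetry.

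\emph{Main obstacle.} The substantive work is the boundary analysis in both steps, carried out for a merely Lipschitz domain and a trace datum $a$ of low regularity. In the liminf step one must convert the finite bulk nonlocal energy into genuine control of $u_n$ inside the $\delta_n$-thick layer $\Omega_{r_K\delta_n}$ and establish the a.e.\ convergence and strict positivity of the boundary weights $\omega_{\delta_n}$. In the recovery step, the cancellation $\frac{1}{\delta_n}\int_\Omega K_{\delta_n}(|x-y|)(a(x)-a(\pi y))\,dy\to0$ cannot be obtained by a pointwise Taylor expansion when $a$ is only a Sobolev trace and $\partial\Omega$ is only Lipschitz; it requires a density argument against a uniform nonlocal trace inequality, together with a coordinated choice of the intermediate scale $h_n$ so that the curvature and the Lipschitz irregularities of $\partial\Omega$ do not spoil the $o(1)$ bounds. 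The uniform nonlocal Poincaré and trace estimates underlying all of this are exactly what the preliminary section is designed to provide.
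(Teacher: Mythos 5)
Your outline reproduces the correct two-step structure, and the idea of mollifying $u_n$ against the kernel $K_{\delta_n}$ to detect the trace is exactly the mechanism the paper uses. However, the liminf argument has a genuine gap, and the recovery-sequence construction is needlessly hard compared to the paper's route.

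\emph{Gap in the liminf step.} You replace $u_n$ by $u$ inside the penalty by estimating the error in $L^p(\partial\Omega)$ by $C\delta_n^{-1/p}\lVert u_n-u\rVert_{L^p(\Omega)}$ and declare this negligible ``by the fast thinning,'' having earlier said one can ``thin further so that $\lVert u_n-u\rVert_{L^p(\Omega)}$ tends to $0$ arbitrarily fast relative to $\delta_n$.'' This is not a valid operation: both $\lVert u_n-u\rVert_{L^p(\Omega)}$ and $\delta_n$ are \emph{given} sequences, and extracting a further subsequence from $\{n\}$ never improves the ratio $\lVert u_{n_k}-u\rVert/\delta_{n_k}^{1/p}$. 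If, for instance, $\lVert u_n-u\rVert_{L^p}\sim\delta_n^{1/(2p)}$, no thinning helps, and your estimate gives nothing. The paper circumvents this precisely by \emph{not} replacing $u_n$ by $u$. It works directly with the mollified $\tilde u_n(x)=\omega_{\delta_n}(x)^{-1}\int_\Omega K_{\delta_n}(|x-y|)u_n(y)\,dy$, proves that $\tilde u_n\to u$ in $L^p(\Omega)$ (\cref{lem:fou}), that $\sup_n\lVert\tilde u_n\rVert_{W^{1,p}(\Omega)}<\infty$ using the finiteness of the bulk nonlocal energy (\cref{lem:L2-estimation}, which bounds $\lVert\nabla\tilde u_n\rVert_{L^p}$ by $\delta_n^{-1}$ times the nonlocal Dirichlet seminorm), and that $\lVert T\tilde u_n\rVert_{L^p(\partial\Omega)}\to 0$ from the penalty bound. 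Then the weak-compactness/trace lemma (\cref{lem:trace}) gives $Tu=0$ with no rate assumption whatsoever. You should adopt this: your own mollification $\tilde u_n$ is already in the argument as $\int K_{\delta_n}u_n\,dy$, but you must use its uniform $W^{1,p}$ bound coming from $\sup_n A_n(u_n)<\infty$ rather than trying to compare it with $u$ at a rate you cannot control.

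\emph{The recovery sequence.} Your boundary-layer interpolation between $\hat a(y)=a(\pi y)$ and $u$ at an intermediate scale $h_n\gg\delta_n$ is much harder than necessary, and you rightly flag the cancellation $\frac1{\delta_n}\int K_{\delta_n}(|x-y|)(a(x)-a(\pi y))\,dy\to 0$ as problematic for Lipschitz $\partial\Omega$ and Sobolev-trace $a$. The paper sidesteps the entire issue with a translation trick (\cref{lem:approx}): pick a $W^{1,p}(\Omega)$ extension $v$ of $a$, write $F_n^v(u):=F_n(v+u)$ and $F^v(u):=F(v+u)$, and observe that $\varGamma$-convergence of $F_n$ to $\sigma_R F$ is equivalent to that of $F_n^v$ to $\sigma_R F^v$. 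After showing that the $v$-and-$a$ part of the penalty tends to zero in $L^p(\partial\Omega)$ (which is a manageable estimate using $C^1$ approximations of $v$), one is reduced to the homogeneous case $a\equiv 0$; the recovery sequence is then simply a sequence of smooth $u_n\to u$ in $W^{1,p}$ with $u_n\equiv 0$ on the layer $\{\mathrm{dist}(x,\Omega^c)<r_K\delta_n\}$, which makes the penalty \emph{identically} zero and leaves only the classical limsup estimate $\frac1{\delta_n^p}\iint R_{\delta_n}|u_n^v(x)-u_n^v(y)|^p\le\sigma_R\lVert\nabla u_n^v\rVert_{L^p}^p$ (\cref{lem:taylor}). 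I would replace your interpolation by this reduction, both because it is cleaner and because it is not clear your argument can be completed under the stated hypotheses.
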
}
 
	\gwy{Subsequently, We consider the eigenfunctions  of Laplace operator with Dirichlet condition \eqref{eq:eigenvalue}. In this part we set $p=2$.}
	\begin{equation}
		\label{eq:eigenvalue}
		\left\{\begin{array}{cc}
			\gwy{-\Delta u=\lambda u}&\text{in }\Omega,\\
			u=0&\text{on }\partial\Omega,\\
			\lVert u \rVert_{L^2(\Omega)}=1.
		\end{array}\right.
	\end{equation}
	From the standard theory of second-order elliptic equations(for example, Section 6.5 in \cite{Evans}), if we denote $\Sigma$ the set of all eigenvalues of Laplace operator $\Delta$, then $\Sigma=\{\lambda_k\}_{k=1}^\infty$ where
	\gwy{\[0<\lambda_1<\lambda_2\leq\dots\leq\lambda_k\leq\dots\]}
	Moreover,
	\[\lambda_1=\min\{\lVert\nabla u\rVert_{L^2(\Omega)}\ \big|\ u\in H_0^1(\Omega),\lVert u \rVert_{L^2(\Omega)}=1\}\]
	\gwy{and the corresponding eigenfunction $\mu_1$ (which means $(\mu_1,\lambda_1)$ is the solution of \eqref{eq:eigenvalue}) is the minimizer of the following functional:}
	\begin{equation*}
		F_e^1(u)=\left\{\begin{array}{cc}
			\int_\Omega|\nabla u(x)|^2dx&\text{if }u\in H_0^1,\lVert u \rVert_{L^2(\Omega)}=1,\\
			\infty&\text{otherwise,}
		\end{array}\right.
	\end{equation*}
	For $k\geq 1$, let $V_k=span\{\mu_1,\mu_2,\dots,\mu_k\}$. Then,
	\[\lambda_{k+1}=\min\{\lVert\nabla u\rVert_{L^2(\Omega)}\ \big|\ u\in H_0^1(\Omega),\lVert u \rVert_{L^2(\Omega)}=1,u\perp V_k\}\]
	and $\mu_k$ is the minimizer of
	\begin{equation}
		\label{eq:eigen-cons-func}
		F_e^{k+1}(u)=\left\{\begin{array}{cc}
			\int_\Omega|\nabla u(x)|^2dx&\text{if }u\in H_0^1,\lVert u \rVert_{L^2(\Omega)}=1,u\perp V_k\\
			\infty&\text{otherwise.}
		\end{array}\right.
	\end{equation}
    For this problem, selecting $a\equiv 0$, $p=2$, then the local functional \eqref{eq:con-fun} becomes
		\begin{equation}
		\label{eq:con-fun-special}
F(u)=\left\{\begin{array}{cc}
			\int_\Omega|\nabla u(x)|^2dx&\text{if }u\in H_0^1,\\
			\infty&\text{otherwise.}
		\end{array}\right.
	\end{equation}
	Note that \eqref{eq:eigen-cons-func} is actually \eqref{eq:con-fun-special} with additional constraints. We can construct the nonlocal approximation of $F_e^k$ by defining the unnormalized functionals:
	\begin{equation}
		\label{eq:eigen-dis-fun}
		\begin{aligned}
			F_{e,n}^{k}(u)&=\left\{\begin{array}{cc}
				F_n(u)&\text{if }\lVert u \rVert_{L^2(\Omega)}=1,u\perp V_{k-1}^n,\\
				\infty&\text{otherwise}
			\end{array}\right.
		\end{aligned}
	\end{equation}
        where $V_0=\emptyset,\ V_k^n=span\{\mu_1^n,\dots\mu_k^n\}$ and $\mu_k^n$ is the minimizer of $F_{e,n}^k.$
	Moreover, with a kernel function $W$ satisfying conditions (K1)-(K3) and \begin{equation}
\label{eq:normalW}
\int_{\mathbb{R}^d} W(|z|^2)dz=1,	\end{equation}
    we can consider the inner product defined as
    \[\langle u,v\rangle_n=\int_\Omega \int_\Omega W_{\delta_n} (|x-y|) u(x) v(y)dx dy\]
    and the normalized functionals defined as
	\begin{equation}
		\label{eq:normal-dis-fun}
		\begin{aligned}
			\tilde{F}_{e,n}^{k}(u)&=\left\{\begin{array}{cc}
				F_n(u)&\text{if }
    \langle u,u\rangle_n
    =1,\,
    u\perp \tilde{V}_{k-1}^n,\\
				\infty&\text{otherwise,}
			\end{array}\right.
		\end{aligned}
	\end{equation}
	where \gwy{$W_{\delta}(s)\coloneqq\frac{1}{\delta^d}W(\frac{s^2}{\delta^2})$}, $\tilde{V}_0=\emptyset,\ \tilde{V}_k^n=span\{\tilde{\mu}_1^n,\dots\tilde{\mu}_k^n\}$ and $\tilde{\mu}_k^n$ is the minimizer of $\tilde{F}_{e,n}^k.$
 We also have the $\varGamma$-convergence from $F_{e,n}^k$ or $\tilde{F}_{e,n}^k$ to $F_e^k$:
	\gwy{\begin{theorem}
		\label{thm:eigenvalue}
		Suppose that $\Omega$ is a Lipschitz bounded domain in $\mathbb{R}^d$.  $K,R,W$ are three kernel functions satisfying (K1)-(K3), \qd{with $W$ satisfying} additionally \eqref{eq:normalW},  \qd{and $\sigma_R$ being given by \eqref{eq:sigmaR}}. $\{\delta_n\}$ is a sequence of positive constants tending to 0 as $n\rightarrow\infty$. Then we have
		\[F_{e,n}^k\stackrel{\varGamma}{\longrightarrow}\gwy{\sigma_RF_e^k}\quad\text{in }L^2(\Omega),\]
		and
		\[\tilde{F}_{e,n}^k\stackrel{\varGamma}{\longrightarrow}\gwy{\sigma_RF_e^k}\quad\text{in }L^2(\Omega),\]
		for all $k\in\mathbb{N}$, where $F_{e,n}^k,\tilde{F}_{e,n}^k$ and $F_e^k$ are defined as \eqref{eq:eigen-dis-fun}, \eqref{eq:normal-dis-fun} and \eqref{eq:eigen-cons-func}.
	\end{theorem}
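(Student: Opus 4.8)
The plan is to prove \cref{thm:eigenvalue} by induction on $k$, using \cref{thm:gamma-con} (specialized to $a\equiv 0$, $p=2$) as the engine, together with the compactness/equicoercivity estimates of \cref{sec:compact}. With $a\equiv 0$ and $p=2$ the functional $F_n$ is a nonnegative quadratic form, say $F_n(u)=Q_n(u,u)$; the $\varGamma$-limit $\sigma_R F$ of \cref{thm:gamma-con} is $\sigma_R\int_\Omega|\nabla u|^2$ on $H_0^1(\Omega)$; and $\sigma_R F_e^k$ agrees with $\sigma_R F$ on its admissible class and equals $+\infty$ elsewhere. So the task reduces to installing the extra constraints (normalization and orthogonality to the previously found eigenfunctions) on both sides of a $\varGamma$-convergence we already have. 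I would handle the unnormalized $F_{e,n}^k$ and the normalized $\tilde F_{e,n}^k$ in parallel: the only difference is that the constraints $\|u\|_{L^2(\Omega)}=1$ and $L^2$-orthogonality get replaced by $\langle u,u\rangle_n=1$ and $\langle\cdot,\cdot\rangle_n$-orthogonality, and this is absorbed once one records that, since $W$ satisfies (K1)--(K3) and \eqref{eq:normalW}, the form $\langle\cdot,\cdot\rangle_n$ is an approximate identity: $\langle u_n,v_n\rangle_n\to\int_\Omega uv$ whenever $u_n\to u$ and $v_n\to v$ in $L^2(\Omega)$, which follows from the Schur bound $\|T_nf\|_{L^2}\le\|f\|_{L^2}$ for $T_nf(x)=\int_\Omega W_{\delta_n}(|x-y|)f(y)\,dy$ together with $\int_\Omega W_{\delta_n}(|x-y|)\,dy\to 1$ for a.e. $x$.

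The inductive hypothesis at levels $j\le k$ reads: (i) $F_{e,n}^j\stackrel{\varGamma}{\longrightarrow}\sigma_R F_e^j$ and $\tilde F_{e,n}^j\stackrel{\varGamma}{\longrightarrow}\sigma_R F_e^j$ in $L^2(\Omega)$; (ii) the nonlocal eigenvalues $\lambda_j^n:=\min F_{e,n}^j$ (and the normalized ones) converge to $\sigma_R\lambda_j$; (iii) along one subsequence fixed once and for all, $\mu_j^n\to\mu_j$ and $\tilde\mu_j^n\to\mu_j$ in $L^2(\Omega)$. For the base case $k=1$, $V_0=\emptyset$, so $F_{e,n}^1$ is just $F_n$ restricted to $\{\|u\|_{L^2}=1\}$: the $\varGamma$-liminf inequality is inherited from \cref{thm:gamma-con} because the constraint is $L^2$-closed, and a recovery sequence is obtained by normalizing the recovery sequence $v_n\to u$ of \cref{thm:gamma-con}, setting $u_n=v_n/\|v_n\|_{L^2}$, which is admissible for $n$ large (since $\|u\|_{L^2}=1\ne 0$) and satisfies $F_n(u_n)=F_n(v_n)/\|v_n\|_{L^2}^2\to\sigma_R\int_\Omega|\nabla u|^2$ by $2$-homogeneity; then (ii)--(iii) follow from equicoercivity, with $\mu_1^n\to\mu_1$ after a sign choice since $\lambda_1$ is simple. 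For the inductive step, the $\varGamma$-liminf inequality at level $k+1$ is immediate: if $u_n\to u$ in $L^2$ with $\liminf F_{e,n}^{k+1}(u_n)<\infty$, pass to a subsequence realizing the $\liminf$ with $\sup_n F_n(u_n)<\infty$; then $u\in H_0^1(\Omega)$ with $\liminf F_n(u_n)\ge\sigma_R\int_\Omega|\nabla u|^2$ by \cref{thm:gamma-con}, while the constraints pass to the limit — $\|u\|_{L^2}=1$, and $\langle u_n,\mu_j^n\rangle=0$ together with (iii) (and the approximate-identity fact in the normalized case) yields $\langle u,\mu_j\rangle=0$ for $j\le k$ — so $\liminf F_{e,n}^{k+1}(u_n)\ge\sigma_R F_e^{k+1}(u)$.

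The core is the $\varGamma$-limsup (recovery-sequence) inequality at level $k+1$. Given $u$ admissible for $F_e^{k+1}$, start from a recovery sequence $v_n\to u$ of \cref{thm:gamma-con}, subtract its $L^2$-projection onto $V_k^n=\mathrm{span}\{\mu_1^n,\dots,\mu_k^n\}$ (orthonormal, as successive constrained minimizers), forming $w_n=v_n-\sum_{j=1}^k c_j^n\mu_j^n$ with $c_j^n=\langle v_n,\mu_j^n\rangle\to\langle u,\mu_j\rangle=0$, and then normalize, $u_n=w_n/\|w_n\|_{L^2}$ (in the normalized case, project and normalize with respect to $\langle\cdot,\cdot\rangle_n$ against $\tilde V_k^n$). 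Then $u_n\to u$ in $L^2$, $u_n$ is admissible for $F_{e,n}^{k+1}$, and the expansion $F_n(w_n)=F_n(v_n)-2\sum_j c_j^n Q_n(v_n,\mu_j^n)+\sum_{i,j}c_i^n c_j^n Q_n(\mu_i^n,\mu_j^n)$, combined with the Cauchy--Schwarz bounds $|Q_n(v_n,\mu_j^n)|\le\sqrt{F_n(v_n)\,\lambda_j^n}$ and $|Q_n(\mu_i^n,\mu_j^n)|\le\sqrt{\lambda_i^n\lambda_j^n}$ (both bounded by (ii) and $F_n(v_n)\to\sigma_R\int_\Omega|\nabla u|^2$) and $c_j^n\to 0$, gives $F_n(w_n)=F_n(v_n)+o(1)$; since $\|w_n\|_{L^2}\to 1$ we conclude $\limsup F_n(u_n)\le\sigma_R\int_\Omega|\nabla u|^2=\sigma_R F_e^{k+1}(u)$. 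This establishes (i) at level $k+1$, and (ii)--(iii) then follow from equicoercivity, closing the induction.

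I expect the main obstacle to be item (iii) combined with the $n$-dependence of the constraint sets in \eqref{eq:eigen-dis-fun} and \eqref{eq:normal-dis-fun}: the whole scheme hinges on the nonlocal eigenfunctions converging to the \emph{same} continuum eigenfunctions $\mu_j$ that define $F_e^{k+1}$ through $V_k$. When $\lambda_j$ is simple this is immediate up to sign, but for $j\ge 2$ the eigenvalue may be degenerate, the minimizer of $F_e^j$ is then not unique, and one must instead extract a single subsequence along which the whole tuple $(\mu_1^n,\dots,\mu_k^n)$ converges to a fixed orthonormal family of Dirichlet eigenfunctions and read $F_e^k$ relative to that family. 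Carrying this bookkeeping consistently through the induction — simultaneously for the $L^2$- and the $\langle\cdot,\cdot\rangle_n$-normalized versions, and checking that the approximate-identity fact indeed suffices to transfer the $\langle\cdot,\cdot\rangle_n$-constraints across the limit — is the delicate part of the argument.
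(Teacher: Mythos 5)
Your proof follows the same overall strategy as the paper — induction on $k$, inheriting the $\varGamma$-convergence of the unconstrained $F_n$ from \cref{thm:gamma-con}, and using compactness (\cref{sec:compact}) together with \cref{lem:minimizer} to get $L^2$-convergence of the nonlocal eigenfunctions $\mu_j^n\to\mu_j$ so that the orthogonality constraints pass to the limit; the treatment of the normalized case via the approximate-identity property of $\langle\cdot,\cdot\rangle_n$ is also the same mechanism the paper uses (its justification is exactly \cref{lem:fou}). Where you genuinely diverge is in the limsup/recovery-sequence step. The paper must keep its recovery sequence $u_n$ smooth and supported in $\Omega_{\delta_n}$ (so that the boundary penalty vanishes identically and \cref{lem:taylor} applies), and therefore cannot subtract the nonsmooth $\mu_m^n$ directly; instead it introduces smooth, compactly supported approximations $\nu_m^n$ of $\mu_m^n$ and then restores orthogonality to $V_{k-1}^n$ through a Gram-matrix inversion $\alpha^n=G_n^{-1}b_n$, with $G_n\to I$. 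You instead subtract the exact $L^2$-projection $\sum_j c_j^n\mu_j^n$ (destroying smoothness and compact support), and absorb the resulting nonzero boundary penalty into the expansion $F_n(w_n)=F_n(v_n)-2\sum_j c_j^n Q_n(v_n,\mu_j^n)+\sum_{i,j}c_i^n c_j^n Q_n(\mu_i^n,\mu_j^n)$, controlling the cross terms via Cauchy--Schwarz against $\lambda_j^n=F_n(\mu_j^n)$, which are bounded by the inductive hypothesis. This is a clean bilinear-form argument that sidesteps the approximation-of-$\mu_m^n$ step entirely; its price is that it exploits the quadratic structure ($p=2$), but that is exactly the regime of \cref{thm:eigenvalue}. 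Both routes rely on the same inductive input ($\lambda_j^n\to\sigma_R\lambda_j$ and $\mu_j^n\to\mu_j$), and you correctly flag as the delicate point the possible degeneracy of the Dirichlet eigenvalues — a bookkeeping issue (one must fix a single subsequence along which the whole tuple of nonlocal eigenfunctions converges, and read $F_e^k$ relative to that limiting orthonormal family) that the paper's proof handles only implicitly.
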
}
	
	Proceeding to focus on the case where $a\equiv 0$ and $p=2$, we can consider a more general nonlocal functional $F_n$:
	\begin{equation}
		\label{eq:dis-fun-general}
		\begin{aligned}
			F_n(u)&=\frac{1}{\delta_n^2}\int_\Omega\int_\Omega R_{\delta_n}(|x-y|)|u(x)-u(y)|^2dxdy
			+E_n(u,0),
		\end{aligned}
	\end{equation}
where the more general formulation of the boundary term is given by
	\begin{equation}
		\label{eq:boudary-term}
		\begin{aligned}
			E_n(u,a)=\int_{\partial\Omega}\int_\Omega\int_\Omega\rho_{\delta_n,x}(y,z)(u(y)-a(x))(u(z)-a(x))dydzdx
		\end{aligned}
	\end{equation}
	with a kernel $\rho_{\delta_n,x}(y,z)$ 
 symmetric with respect to $y$ and $z$. \cref{thm:gamma-con} offers the $\varGamma$-convergence for the special choice
	\begin{equation}\label{eq:rho-nx}
		\begin{aligned}
			\rho_{\delta_n,x}(y,z)=\frac{1}{\delta_n^2}K_{\delta_n}(|x-y|)K_{\delta_n}(|x-z|).
		\end{aligned}
	\end{equation}
	To maintain this property in general situations, some restrictions on  $\rho_{\delta_n,x}(y,z)$ should be  
 imposed. Firstly, in order for $E_n(u,0)$ to capture information about the boundary, the kernel should rapidly decay or directly vanish when $|(y,z)-(x,x)|$ is large. Hence, we require $\rho_{\delta_n,x}$ to be compactly supported and the support set to be shrinking when $n$ tending to infinity, ensuring increasingly precise delineation for the boundary. Specifically, there should exist a sequence of positive constants $\{c_n\}$, $\lim_{n\rightarrow\infty}c_n=0$, such that $\rho_{\delta_n,x}$ is only nonzero when $|x-y|$, $|x-z|\leq c_n$. 
	
Moreover, as $n\to \infty$, we expect the convergence of the
minimizers 
 of $\{F_n\}$, which are confined only in  $L^2(\Omega)$, to a minimizer of $F$ in the $L^2$ norm. As the minimizer of $F$, we have $u\in H^1$ and $Tu\equiv 0$. Thus, intuitively speaking, the sequence of minimizers of $\{F_n\}$ should take small absolute value near the boundary. 
 To quantify this and to overcome the possible lack of regularity, 
 we pick a kernel $\hat{K}$ satisfying (K1)-(K3),  \gwy{$\hat{K}_\delta(s)=\frac{1}{\delta^d}\hat{K}(\frac{s^2}{\delta^2})$}, and
 a regularized form of  $\{u_n\}$:
		  \begin{equation}
			 \label{eq:tilde-u}
			 \tilde{u}_n (x)\coloneqq \frac{1}{\omega_{\delta_n}(x)}\int_\Omega \hat{K}_{\delta_n}(|x-y|)u_n(y)dy, \; \text{ and }\;\omega_{\delta_n}(x)\coloneqq \int_\Omega \hat{K}_{\delta_n}(|x-y|)dy.
		  \end{equation}
Note that the normalization coefficient 
$\omega_{\delta_n}(x)$ has uniformly positive lower and upper bounds \qd{with respect to $\delta_n$ as $\delta_n\to 0$}.  \qd{Should $\hat{K}_\delta(s)$  be $\frac{1}{\delta^d}
\hat{K}(\frac{s^2}{\delta^2}) $?}  Mollifications like \eqref{eq:tilde-u} have been  utilized in other works, for example, \cite{scott2023nonlocal}.
We then require a coercivity condition \[C_nE_n(u_n,0)\geq\lVert\tilde{u}_n\rVert_{L^2(\partial\Omega)}^2, \quad\forall u_n\in L^2(\Omega)\] for some 
$\hat{K}$  and positive constants $\{C_n\}$ satisfying $\lim_{n\rightarrow\infty}C_n=0$. 
Effectively, this means that the penalty  functional is placed on  matching with the boundary data, not directly by $u_n$, but a more regular $\tilde{u}_n$.
As a summary, we have the following theorem:
			 
		  \gwy{\begin{theorem}
			  \label{thm:gamma-con-general}
			  Suppose that $\Omega$ is a Lipschitz bounded domain in $\mathbb{R}^d$. $R$ is
     a kernel satisfying (K1)-(K3), \qd{and $\sigma_R$ is given by \eqref{eq:sigmaR}}.
     $\{\delta_n\}$, $\{c_n\}$, $\{C_n\}$ are sequences of positive constants tending to 0 as $n\rightarrow\infty$. $\rho_{\delta_n,x}$ is a kernel satisfying following two conditions:
			  \begin{enumerate}
				  \item[](compact support) $\rho_{\delta_n,x}$ is only nonzero when $|x-y|$, $|x-z|\leq c_n$.
				  \item[](coercivity) $C_nE_n(u_n,0)\geq \lVert\tilde{u}_n\rVert_{L^2(\partial\Omega)}^2$ for any $u_n\in L^2(\Omega)$,
			  \end{enumerate}
			  where $E_n$, $\{\tilde{u}_n\}$ are defined as \eqref{eq:boudary-term} and \eqref{eq:tilde-u} for some $\hat{K}$ satisfying (K1)-(K3).
			  Then, we have
			  \[F_n\stackrel{\varGamma}{\longrightarrow}\gwy{\sigma_RF}\quad\text{in }L^2(\Omega),\]
			  where $F_n,F$ are defined as \eqref{eq:dis-fun-general},\eqref{eq:con-fun-special}.
		  \end{theorem}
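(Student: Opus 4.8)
The plan is to deduce Theorem~\ref{thm:gamma-con-general} from Theorem~\ref{thm:gamma-con} (for $p=2$, $a\equiv 0$) by comparing the general boundary term $E_n(u,0)$ with the special one $B_2(u,0)$ built from a kernel $K$. Recall that $\Gamma$-convergence $F_n \xrightarrow{\Gamma} \sigma_R F$ in $L^2(\Omega)$ requires two things: (i) the $\liminf$ inequality — for every $u_n \to u$ in $L^2(\Omega)$, $\liminf_n F_n(u_n) \geq \sigma_R F(u)$; and (ii) the recovery sequence — for every $u \in L^2(\Omega)$ there exist $u_n \to u$ with $\limsup_n F_n(u_n) \leq \sigma_R F(u)$. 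Since the nonlocal Dirichlet part $\frac{1}{\delta_n^2}\int_\Omega\int_\Omega R_{\delta_n}(|x-y|)|u(x)-u(y)|^2\,dx\,dy$ is identical to the one in Theorem~\ref{thm:gamma-con} and $E_n \geq 0$, one half of the argument is essentially inherited.

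First I would prove the $\liminf$ inequality. Let $u_n \to u$ in $L^2(\Omega)$ with $\liminf_n F_n(u_n) =: L < \infty$ (else nothing to prove); pass to a subsequence attaining the liminf with $F_n(u_n)$ bounded. From the bound on the nonlocal Dirichlet part and the compactness result for nonlocal energies (the analogue of what is used in Section~\ref{sec:compact}, i.e. BBM-type compactness), we get $u \in H^1(\Omega)$ and $\liminf_n \frac{1}{\delta_n^2}\iint R_{\delta_n}|u_n(x)-u_n(y)|^2 \geq \sigma_R \int_\Omega |\nabla u|^2$. It remains to show $Tu = 0$ on $\partial\Omega$. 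Here the coercivity hypothesis enters: $\|\tilde u_n\|_{L^2(\partial\Omega)}^2 \leq C_n E_n(u_n,0) \leq C_n F_n(u_n) \leq C_n L' \to 0$. So $\tilde u_n \to 0$ in $L^2(\partial\Omega)$. Since $\hat K$ has shrinking support, $\tilde u_n$ is a mollification of $u_n$ at scale $\delta_n$, and because $u_n \to u$ in $L^2(\Omega)$ with $u \in H^1(\Omega)$, one shows $\tilde u_n \to u$ in $L^2_{loc}$ and, using the trace theorem together with the uniform bound on the nonlocal energy, that $\tilde u_n\big|_{\partial\Omega} \to Tu$ in $L^2(\partial\Omega)$ (this is the standard device: the trace of the mollified function converges to the trace of the $H^1$ limit when the energies are bounded, cf.\ \cite{scott2023nonlocal}). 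Combining, $Tu = 0$, hence $u \in H_0^1(\Omega)$ and $\liminf_n F_n(u_n) \geq \sigma_R \int_\Omega|\nabla u|^2 = \sigma_R F(u)$.

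Next I would construct the recovery sequence. Given $u \in H_0^1(\Omega)$ (for $u \notin H_0^1$ take $u_n = u$ constantly so $\limsup F_n = \infty = \sigma_R F(u)$), I want $u_n \to u$ in $L^2$ with $\limsup_n \big[\tfrac{1}{\delta_n^2}\iint R_{\delta_n}|u_n(x)-u_n(y)|^2 + E_n(u_n,0)\big] \leq \sigma_R\int_\Omega|\nabla u|^2$. The nonlocal Dirichlet part is handled exactly as in Theorem~\ref{thm:gamma-con}'s recovery construction, so the only new issue is forcing $E_n(u_n,0) \to 0$. The natural idea is to modify the recovery sequence near $\partial\Omega$: since $u \in H_0^1$, approximate $u$ by $u^{(\epsilon)}$ compactly supported in $\Omega$ with $\|u^{(\epsilon)} - u\|_{H^1} \to 0$, build the recovery sequence for each $u^{(\epsilon)}$, and note that once $c_n$ (the support radius of $\rho_{\delta_n,x}$) is smaller than $\mathrm{dist}(\mathrm{supp}\,u^{(\epsilon)}, \partial\Omega)$, the term $E_n(u_n^{(\epsilon)},0)$ vanishes identically because $u_n^{(\epsilon)}(y) = 0$ for all $y$ within $c_n$ of $\partial\Omega$ (taking the recovery sequence to also be supported away from the boundary — e.g.\ $u_n^{(\epsilon)} = u^{(\epsilon)}$ itself if the nonlocal energy of a fixed $H^1$ function already converges, which it does by the $\Gamma$-$\limsup$ for the interior energy). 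Then a diagonal argument over $\epsilon = \epsilon(n) \to 0$ slow enough produces the desired recovery sequence with $u_n \to u$ in $L^2$ and $\limsup F_n(u_n) \leq \sigma_R F(u)$.

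The main obstacle I anticipate is the trace-convergence step in the $\liminf$ part: showing that $\tilde u_n\big|_{\partial\Omega} \to Tu$ in $L^2(\partial\Omega)$ under only a uniform bound on the nonlocal energy and $L^2$ convergence $u_n \to u$. One must control the difference between the boundary values of the mollification $\tilde u_n$ and the trace of the $H^1$ limit, which requires a quantitative nonlocal-to-local trace estimate (bounding $\|\tilde u_n - \text{(mollification of }u)\|_{L^2(\partial\Omega)}$ by a constant times the square root of the nonlocal Dirichlet energy of $u_n - u$, times a factor going to zero) — this is where the hypotheses (K1)--(K3) on $\hat K$, the uniform bounds on $\omega_{\delta_n}$, and the Lipschitz regularity of $\partial\Omega$ all get used. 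A secondary technical point is ensuring the diagonal extraction in the recovery sequence keeps $u_n \to u$ in $L^2(\Omega)$ while simultaneously $E_n(u_n,0) = 0$ for $n$ large; this is routine once the rate at which $c_n \to 0$ is compared with how fast $\mathrm{supp}\,u^{(\epsilon(n))}$ is allowed to approach $\partial\Omega$.
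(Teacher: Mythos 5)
Your proposal tracks the paper's proof closely: coercivity forces $\|\tilde u_n\|_{L^2(\partial\Omega)}\to 0$, the uniform $H^1$ bound on $\tilde u_n$ (\cref{lem:L2-estimation}) together with $\tilde u_n\to u$ in $L^2$ (\cref{lem:fou}) then give $Tu=0$ via \cref{lem:trace}, and the recovery sequence is chosen to vanish within distance $c_n$ of $\partial\Omega$ so the compact-support hypothesis annihilates $E_n$. The trace step you flag as the main obstacle is in fact soft---no quantitative nonlocal-to-local trace estimate is needed, since \cref{lem:trace} derives $Tu=0$ directly from the weak $W^{1,p}$ convergence of $\{\tilde u_n\}$ and the compactness of the trace embedding $W^{1,p}(\Omega)\hookrightarrow L^p(\partial\Omega)$.
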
}
		\begin{remark}
		    For general $p>1$, similar conclusion can be derived in the same way as in \cref{subsec:proof3}. However, we do not yet have a unified formulation, like \eqref{eq:boudary-term} for $p=2$, that also works for more general $p\neq 2$, i.e., a formulation that covers both  
            \begin{equation*}
            \begin{aligned}
      E_n^1(u,a)=\int_{\partial\Omega}\left|\frac{1}{\delta_n}\int_\Omega K_{\delta_n}(|x-y|)(u(y)-a(x))dy\right|^pdx,
                 \end{aligned}
            \end{equation*}
      and
                \begin{equation*}
            \begin{aligned}
                E_n^2(u,a)&=\frac{1}{\delta_n^p}\int_{\partial\Omega}\int_\Omega K_{\delta_n}(|x-y|)|u(y)-a(x)|^pdydx.
            \end{aligned}
            \end{equation*}
   Hence, for the purpose of conciseness, we only discuss the $p=2$ case in \cref{thm:gamma-con-general}.
		\end{remark}
  \gwy{\begin{remark}
  \label{remark1}
    The main contribution of \autoref{thm:gamma-con-general}, \qd{in comparison with \autoref{thm:gamma-con}, is to formulate} a more general boundary term. This general form contains a large class of kernel $\rho_{\delta_n,x}$ such as $\rho_{\delta_n,x}(y,z)=\frac{1}{\delta_n^2}K_{\delta_n}(|x-y|)\delta(|y-z
    |)$, \qd{where} $\delta(\cdot)$ means \qd{the Dirac-delta} measure. See more details in \autoref{subsec:proof3}. The subscript $x$ in \qd{the} kernel $\rho_{\delta_n,x}(y,z)$  means not only that it is centered at $x$ but also that other parts of kernel expression can depend on $x$. For example, \cite{tao2017nonlocal,wang2023nonlocal}
    actually considered a special case covered by Theorem 2.3,
 \[\rho_{\delta_n,x}(y,z)=\frac{2}{\delta^2\bar{\bar{\omega}}_{\delta_n}(x)}\bar{R}_{\delta_n}(|x-y|)\bar{R}_{\delta_n}(|x-z|)\]
 where $\bar{\bar{\omega}}_{\delta_n}$ is a bounded function depending on $x$. And in~\cite{shi2023nonlocal}, another special case is considered, 
 \begin{equation*}
     \rho_{\delta_n,x}(y,z)=\frac{4}{\delta_n^2\mu_{\delta_n}(x)}\bar{R}_{\delta_n}(|x-y|)\delta_n(|y-z|)
 \end{equation*}
where $\mu_{\delta_n}(x)$ is also a bounded function. 
  \end{remark}}
With the $\varGamma$-convergence,  the convergence of their minimizers folows.
		  \gwy{\begin{theorem}
			  \label{thm:gamma-con-minimizers}
			  Suppose that $\Omega$ is a Lipschitz bounded domain in $\mathbb{R}^d$. $1<p<\infty$ is a constant. $K,R$ are two kernel functions satisfying (K1)-(K3),      \qd{with $W$ satisfying  additionally \eqref{eq:normalW} and $\sigma_R$ being given by \eqref{eq:sigmaR}}. 
     $\{\delta_n\}$ is a sequence of positive constants tending to 0 as $n\rightarrow\infty$. $F_n,F$ are defined as \eqref{eq:dis-fun},\eqref{eq:con-fun}. Then any sequence $\{u_n\}\subset L^p(\Omega)$ satisfying
			  \[\lim_{n\rightarrow\infty}(F_n(u_n)-\inf_{u\in L^p(\Omega)}F_n(u))=0\]
			  is relatively compact in $L^p(\Omega)$ and
			  \[\lim_{n\rightarrow\infty}F_n(u_n)=\min_{u\in L^p(\Omega)}\gwy{\sigma_RF(u)}.\]
			  Furthermore, every cluster point of $\{u_n\}$ is a minimizer of $F$.
		  \end{theorem}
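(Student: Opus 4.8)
The plan is to obtain \cref{thm:gamma-con-minimizers} as a direct consequence of the $\varGamma$-convergence in \cref{thm:gamma-con} and the compactness results of \cref{sec:compact}, via the fundamental theorem of $\varGamma$-convergence. The first step is to check that the infima $\inf_{u\in L^p(\Omega)}F_n(u)$ stay bounded. Since $a$ is the trace of some $W^{1,p}(\Omega)$ function, the limiting functional $F$ of \eqref{eq:con-fun} is proper and attains its minimum at the weak solution $u^\star$ of \eqref{eq:p-laplace}. Applying the $\limsup$ (recovery sequence) part of the $\varGamma$-convergence $F_n\stackrel{\varGamma}{\to}\sigma_R F$ at $u^\star$ produces $v_n\to u^\star$ in $L^p(\Omega)$ with $\limsup_n F_n(v_n)\le \sigma_R F(u^\star)=\sigma_R\min_u F(u)$; since $\inf_u F_n(u)\le F_n(v_n)$, this yields $\limsup_n \inf_u F_n(u)\le \sigma_R\min_u F(u)<\infty$. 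Hence any sequence $\{u_n\}$ with $F_n(u_n)-\inf_u F_n(u)\to 0$ satisfies $\sup_n F_n(u_n)<\infty$, so in particular both the nonlocal Dirichlet energy part and the boundary penalty part in \eqref{eq:dis-fun} are uniformly bounded along $\{u_n\}$.

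Next I would invoke the compactness statement established in \cref{sec:compact}: a sequence $\{u_n\}\subset L^p(\Omega)$ with uniformly bounded energies $\sup_n F_n(u_n)<\infty$ is relatively compact in $L^p(\Omega)$, and every $L^p$-cluster point belongs to the effective domain of $F$, i.e.\ lies in $W^{1,p}(\Omega)$ and has trace $a$ on $\partial\Omega$. Combined with the previous paragraph, this gives the asserted relative compactness of $\{u_n\}$ in $L^p(\Omega)$.

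Finally I would run the standard minimizing-sequence argument. Given any subsequence of $\{u_n\}$, compactness lets us extract a further subsequence $u_{n_k}\to u$ in $L^p(\Omega)$. The $\liminf$ inequality of \cref{thm:gamma-con} gives $\sigma_R F(u)\le \liminf_k F_{n_k}(u_{n_k})$, while from $F_{n_k}(u_{n_k})=\inf_u F_{n_k}(u)+o(1)\le F_{n_k}(v_{n_k})+o(1)$ and the first step we get $\limsup_k F_{n_k}(u_{n_k})\le \sigma_R\min_u F(u)$. Chaining these inequalities forces $\sigma_R F(u)=\sigma_R\min_u F(u)$, i.e.\ $u$ is a minimizer of $F$ (the positive factor $\sigma_R$ is irrelevant for minimizers), and $F_{n_k}(u_{n_k})\to \sigma_R\min_u F(u)=\min_u \sigma_R F(u)$. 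Because this limit is independent of the chosen subsequence, the whole sequence satisfies $\lim_n F_n(u_n)=\min_u\sigma_R F(u)$, and every cluster point of $\{u_n\}$ is a minimizer of $F$.

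The only substantive ingredient here is the compactness statement of \cref{sec:compact}; once that is in hand, everything else is routine $\varGamma$-convergence bookkeeping. Accordingly, the real obstacle — which I would not reprove in this proof — lies in \cref{sec:compact}: deriving $L^p$-precompactness of an energy-bounded sequence from the nonlocal Dirichlet energy (a Bourgain--Brezis--Mironescu / Fr\'echet--Kolmogorov type estimate) while simultaneously controlling the behaviour of $u_n$ near $\partial\Omega$ through the boundary penalty term, so that the limit has the correct trace and thus lies in the domain of $F$.
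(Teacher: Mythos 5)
Your argument follows exactly the route the paper takes: establish boundedness of the infima via the recovery sequence at a minimizer of $F$, invoke the compactness results of \cref{sec:compact} (the nonlocal Poincar\'e bound of \cref{lem:poincare} plus \cref{lem:compactness}) to get $L^p$-precompactness from the uniform energy bound, and then run the standard fundamental-theorem-of-$\varGamma$-convergence argument, which the paper packages as \cref{lem:minimizer}. One small point of attribution: \cref{lem:compactness} in the paper yields only $L^p$-precompactness of energy-bounded sequences and says nothing about the trace of limit points — the trace identification you mention is really supplied by the $\liminf$ inequality in \cref{thm:gamma-con}, which your third step already uses, so the argument is complete but the intermediate claim is slightly overstated.
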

		
		  \begin{theorem}
				\label{thm:eigenvalue-minimizers}
				Suppose that $\Omega$ is a Lipschitz bounded domain in $\mathbb{R}^d$.  $K,R,W$ are three kernel functions satisfying (K1)-(K3) 
         \qd{with $W$ satisfying  additionally \eqref{eq:normalW} and $\sigma_R$ being given by \eqref{eq:sigmaR}}.
    $\{\delta_n\}$ is a sequence of positive constants tending to 0 as $n\rightarrow\infty$. $F_{e,n}^k,\tilde{F}_{e,n}^k$ and $F_e^k$ are defined as \eqref{eq:eigen-dis-fun}, \eqref{eq:normal-dis-fun} and \eqref{eq:eigen-cons-func}. Then any sequence $\{u_n\}\subset L^2(\Omega)$ satisfying
				\[\lim_{n\rightarrow\infty}(F_{e,n}^k(u_n)-\inf_{u\in L^2(\Omega)}F_{e,n}^k(u))=0\]
				is relatively compact in $L^2(\Omega)$ and	\[\lim_{n\rightarrow\infty}F_{e,n}^k(u_n)=\min_{u\in L^2(\Omega)}\gwy{\sigma_RF_{e}^k(u)}.\]
				Furthermore, every cluster point of $\{u_n\}$ is a minimizer of $F_{e}^k$. The conclusions still hold if $\{F_{e,n}^k\}$ is replaced by $\{\tilde{F}_{e,n}^k\}$.
		  \end{theorem}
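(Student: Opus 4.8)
The plan is to deduce \cref{thm:eigenvalue-minimizers} as the standard consequence of $\Gamma$-convergence combined with equi-coercivity, in the same way that \cref{thm:gamma-con-minimizers} follows from \cref{thm:gamma-con}. Since \cref{thm:eigenvalue} already supplies $F_{e,n}^k\stackrel{\Gamma}{\longrightarrow}\sigma_R F_e^k$ and $\tilde{F}_{e,n}^k\stackrel{\Gamma}{\longrightarrow}\sigma_R F_e^k$ in $L^2(\Omega)$ --- and, as part of its proof, the construction of admissible recovery sequences for the constrained limit functionals --- the only genuinely new ingredient to check is equi-coercivity of the families $\{F_{e,n}^k\}$ and $\{\tilde{F}_{e,n}^k\}$: every sequence of uniformly bounded energy must be relatively compact in $L^2(\Omega)$. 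Granting this, the convergence of infima, the relative compactness of almost-minimizing sequences, and the identification of their cluster points as minimizers of $F_e^k$ all follow from the abstract $\Gamma$-convergence theory recalled in \cref{sec:preliminaries} (exactly as in the proof of \cref{thm:gamma-con-minimizers}).

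For equi-coercivity, suppose $F_{e,n}^k(u_n)\le M$ (resp.\ $\tilde{F}_{e,n}^k(u_n)\le M$). On the respective constraint sets these functionals coincide with $F_n$ taken with $a\equiv 0$, and the two summands of $F_n$ in \eqref{eq:dis-fun} are both nonnegative, so in particular
\[
\frac{1}{\delta_n^2}\int_\Omega\int_\Omega R_{\delta_n}(|x-y|)\,|u_n(x)-u_n(y)|^2\,dx\,dy\le M .
\]
In the unnormalized case the constraint gives $\lVert u_n\rVert_{L^2(\Omega)}=1$ directly. In the normalized case $\langle u_n,u_n\rangle_n=1$, and from the symmetrization identity
\[
\int_\Omega\Bigl(\int_\Omega W_{\delta_n}(|x-y|)\,dy\Bigr)u_n(x)^2\,dx=\langle u_n,u_n\rangle_n+\tfrac12\int_\Omega\int_\Omega W_{\delta_n}(|x-y|)\,|u_n(x)-u_n(y)|^2\,dx\,dy,
\]
the uniform positive lower bound of $\int_\Omega W_{\delta_n}(|x-y|)\,dy$ on a Lipschitz domain, and the comparison of the $W_{\delta_n}$-interaction term with the scaled $R_{\delta_n}$-energy supplied by the nonlocal estimates of \cref{sec:preliminaries} (valid since $R$ is bounded below near the origin, cf.\ (K1)-(K3)), one again obtains $\sup_n\lVert u_n\rVert_{L^2(\Omega)}<\infty$. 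In either case the $L^2$-bound together with the displayed nonlocal energy bound is exactly the hypothesis of the nonlocal compactness theorem established in \cref{sec:compact}, which gives relative compactness of $\{u_n\}$ in $L^2(\Omega)$ (and that every $L^2$-limit point lies in $H^1(\Omega)$). Hence both families are equi-coercive.

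It then remains to run the usual argument. The recovery sequence $v_n$ for the $k$-th Dirichlet eigenfunction $\mu_k$ produced in the proof of \cref{thm:eigenvalue} --- admissible for $F_{e,n}^k$, resp.\ $\tilde{F}_{e,n}^k$, because the nonlocal eigenspaces $V_{k-1}^n$, resp.\ $\tilde{V}_{k-1}^n$, are shown there to converge to $V_{k-1}$ --- gives $\limsup_n\inf_{L^2(\Omega)}F_{e,n}^k\le\limsup_n F_{e,n}^k(v_n)\le\sigma_R F_e^k(\mu_k)=\sigma_R\lambda_k=\min_{L^2(\Omega)}\sigma_R F_e^k<\infty$. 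Consequently any sequence $\{u_n\}$ with $F_{e,n}^k(u_n)-\inf_{L^2(\Omega)}F_{e,n}^k\to 0$ has $\sup_n F_{e,n}^k(u_n)<\infty$, hence is relatively compact in $L^2(\Omega)$ by the previous step. Along any subsequence $u_{n_j}\to u$ in $L^2(\Omega)$, the $\Gamma$-liminf inequality of \cref{thm:eigenvalue} yields
\[
\sigma_R F_e^k(u)\le\liminf_j F_{e,n_j}^k(u_{n_j})=\liminf_j\inf_{L^2(\Omega)}F_{e,n_j}^k\le\limsup_n\inf_{L^2(\Omega)}F_{e,n}^k\le\min_{L^2(\Omega)}\sigma_R F_e^k ,
\]
so $u$ is a minimizer of $F_e^k$ and $\lim_j F_{e,n_j}^k(u_{n_j})=\min_{L^2(\Omega)}\sigma_R F_e^k$. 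Since every subsequence admits a further subsequence with this property, the whole sequence converges, $\lim_n F_{e,n}^k(u_n)=\min_{L^2(\Omega)}\sigma_R F_e^k$, and every cluster point of $\{u_n\}$ minimizes $F_e^k$. The argument for $\{\tilde{F}_{e,n}^k\}$ is word for word the same, the only change being the normalization bookkeeping for $\langle\cdot,\cdot\rangle_n$ noted above.

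The main obstacle is the equi-coercivity step, and within it the normalized functionals, where one must recover a uniform $L^2$-bound from the weighted constraint $\langle u_n,u_n\rangle_n=1$; this forces the interplay between $\int_\Omega W_{\delta_n}(|x-y|)\,dy$ and the $R_{\delta_n}$-Dirichlet energy described above. The more delicate structural fact --- that admissible recovery sequences exist for the constrained problems at level $k$, which rests on the convergence $V_{k-1}^n\to V_{k-1}$ (resp.\ $\tilde{V}_{k-1}^n\to V_{k-1}$) obtained by an induction on $k$ beginning with the constraint-free case $k=1$, with subsequences used to accommodate possible eigenvalue multiplicity --- is already absorbed into \cref{thm:eigenvalue}; given that theorem, \cref{thm:eigenvalue-minimizers} is a routine application of the $\Gamma$-convergence formalism.
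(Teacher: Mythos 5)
Your proposal is correct and follows the paper's overall strategy: $\Gamma$-convergence from \cref{thm:eigenvalue} plus an equi-coercivity argument plus the abstract convergence-of-minimizers result (\cref{lem:minimizer}). The one place where you take a genuinely different route is the uniform $L^2$-bound needed for equi-coercivity. The paper obtains this bound for all three families of nonlocal functionals at once via the nonlocal Poincar\'e-type estimate of \cref{lem:poincare}: since $F_{e,n}^k(u_n)\ge F_n(u_n)$ and $\tilde F_{e,n}^k(u_n)\ge F_n(u_n)$, a bound on the eigenvalue functional gives a bound on $F_n(u_n)$, and \cref{lem:poincare} (which uses the mollified $\tilde u_n$, the classical Poincar\'e inequality with boundary term, and \cref{lem:L2-estimation}) then yields $\sup_n\lVert u_n\rVert_{L^2(\Omega)}<\infty$. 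You instead extract the $L^2$-bound directly from the normalization constraint --- trivially in the unnormalized case $\lVert u_n\rVert_{L^2}=1$, and via the symmetrization identity plus a comparison of the $W_{\delta_n}$-interaction term with the $R_{\delta_n}$-energy in the normalized case $\langle u_n,u_n\rangle_n=1$. Your route is more tailored to the eigenvalue structure and arguably more transparent for the weighted constraint; the paper's route is more uniform across all its minimizer theorems and avoids having to compare the $W$ and $R$ kernels. One small caveat in your normalized-case argument: \cref{lem:kernel_diff_scales} as stated compares the same kernel $R$ at different scales, not two distinct kernels $W$ and $R$; the comparison you invoke does hold (using boundedness of $W$ from above, positivity of $R$ near the origin, and then \cref{lem:kernel_diff_scales}), but it is a short extra step you should make explicit rather than attributing it directly to the lemma. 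Also worth noting is that you correctly supply the upper bound $\limsup_n\inf F_{e,n}^k<\infty$ via a recovery sequence, a point the paper treats implicitly.
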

		
		  \begin{theorem}
				\label{thm:gamma-con-general-minimizers}
				Suppose that $\Omega$ is a Lipschitz bounded domain in $\mathbb{R}^d$. $R$ is a kernel satisfying (K1)-(K3),
     \qd{and $\sigma_R$ is given by \eqref{eq:sigmaR}}.
    $\{\delta_n\}$, 
    $\{c_n\}$, $\{C_n\}$ are sequences of positive constants tending to 0 as $n\rightarrow\infty$. $\rho_{\delta_n,x}$ is a kernel satisfying following two conditions:
				\begin{enumerate}
					\item[](compact support) $\rho_{\delta_n,x}$ is only nonzero when $|x-y|$, $|x-z|\leq c_n$.
					\item[](coercivity) $C_nE_n(u_n,0)\geq \lVert\tilde{u}_n\rVert_{L^2(\partial\Omega)}^2$ 
   for any $u_n\in L^2(\Omega)$,
				\end{enumerate}
				where $E_n$ and $\{\tilde{u}_n\}$ are defined as \eqref{eq:boudary-term} and  \eqref{eq:tilde-u} for some $\hat{K}$ satisfying (K1)-(K3), $F_n$ and $F$ are defined as \eqref{eq:dis-fun-general},\eqref{eq:con-fun-special}.
				Then any sequence $\{u_n\}\subset L^2(\Omega)$ such that
				\[\lim_{n\rightarrow\infty}(F_n(u_n)-\inf_{u\in L^2(\Omega)}F_n(u))=0\]
				is relatively compact in $L^2(\Omega)$ and
				\[\lim_{n\rightarrow\infty}F_n(u_n)=\min_{u\in L^2(\Omega)}\gwy{\sigma_RF(u)}.\]
				Futhermore, every cluster point of $\{u_n\}$ is a minimizer of $F$.
		  \end{theorem}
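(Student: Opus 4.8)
The plan is to deduce \cref{thm:gamma-con-general-minimizers} from the $\varGamma$-convergence $F_n\stackrel{\varGamma}{\longrightarrow}\sigma_RF$ established in \cref{thm:gamma-con-general}, together with an equi-coercivity (compactness) property for the family $\{F_n\}$, via the fundamental theorem of $\varGamma$-convergence recalled in \cref{sec:preliminaries}. The argument runs in parallel with the proofs of \cref{thm:gamma-con-minimizers} and \cref{thm:eigenvalue-minimizers}; the only new point is that the compactness input must be supplied for the general boundary term $E_n$. A preliminary observation: each $F_n$ is nonnegative, since the nonlocal Dirichlet part is nonnegative and the coercivity hypothesis $C_nE_n(u,0)\ge\lVert\tilde u_n\rVert_{L^2(\partial\Omega)}^2\ge 0$ with $C_n>0$ forces $E_n(u,0)\ge 0$ for all $u\in L^2(\Omega)$; moreover, since $a\equiv 0$, the admissible function $u\equiv 0$ gives $F_n(0)=0$, so $\inf_{L^2(\Omega)}F_n=0$ for every $n$ and $\min_{L^2(\Omega)}\sigma_RF=\sigma_RF(0)=0$. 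Consequently, any sequence $\{u_n\}$ with $F_n(u_n)-\inf F_n\to 0$ satisfies $F_n(u_n)\to 0$; in particular $M:=\sup_n F_n(u_n)<\infty$.

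The substantive step is equi-coercivity: any $\{u_n\}\subset L^2(\Omega)$ with $\sup_n F_n(u_n)\le M<\infty$ is relatively compact in $L^2(\Omega)$. Here both structural hypotheses on $\rho_{\delta_n,x}$ are used. From the coercivity condition, $\lVert\tilde u_n\rVert_{L^2(\partial\Omega)}^2\le C_nE_n(u_n,0)\le C_nM\to 0$, so the mollified boundary values $\tilde u_n$ vanish in $L^2(\partial\Omega)$; combined with the uniform bound $\delta_n^{-2}\int_\Omega\int_\Omega R_{\delta_n}(|x-y|)|u_n(x)-u_n(y)|^2\,dx\,dy\le F_n(u_n)\le M$ on the nonlocal Dirichlet energy, a nonlocal Poincaré-type inequality (the one proved in \cref{sec:compact}, where the compact-support hypothesis $|x-y|,|x-z|\le c_n$ with $c_n\to 0$ is what makes $E_n$ a genuine boundary penalty) yields $\sup_n\lVert u_n\rVert_{L^2(\Omega)}<\infty$. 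Relative compactness in $L^2(\Omega)$ then follows from the Bourgain--Brezis--Mironescu/Ponce-type compactness criterion for families with uniformly bounded $L^2$ norm and uniformly bounded nonlocal Dirichlet energy, which is exactly the compactness result of \cref{sec:compact}; I would invoke it directly.

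With equi-coercivity in hand, the conclusion is the standard $\varGamma$-convergence argument. An almost-minimizing sequence $\{u_n\}$ is relatively compact in $L^2(\Omega)$ by the previous step; let $u^\ast$ be a cluster point, say $u_{n_k}\to u^\ast$ in $L^2(\Omega)$. The $\varGamma$-$\liminf$ inequality of \cref{thm:gamma-con-general} gives $\sigma_RF(u^\ast)\le\liminf_k F_{n_k}(u_{n_k})=0$, while $\sigma_RF\ge 0$, so $\sigma_RF(u^\ast)=0$; hence $u^\ast\in H_0^1(\Omega)$ with $\nabla u^\ast=0$, i.e. $u^\ast\equiv 0$, which is the minimizer of $F$ in \eqref{eq:con-fun-special}. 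Since moreover $F_n(u_n)\to 0=\min_{L^2(\Omega)}\sigma_RF$, all three assertions follow. The main obstacle is the equi-coercivity, and within it the nonlocal Poincaré inequality controlling $\lVert u_n\rVert_{L^2(\Omega)}$ from the nonlocal Dirichlet energy and the smallness of $\lVert\tilde u_n\rVert_{L^2(\partial\Omega)}$ for a boundary kernel $\rho_{\delta_n,x}$ constrained only by the compact-support and coercivity conditions; once the compactness results of \cref{sec:compact} are available, everything else is routine abstract $\varGamma$-convergence theory.
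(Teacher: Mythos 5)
Your proposal is correct and follows essentially the same route as the paper: \cref{thm:gamma-con-general} supplies the $\varGamma$-convergence, the mollification $\tilde u_n$ of \eqref{eq:tilde-u} together with \cref{lem:L2-estimation}, the coercivity hypothesis, and the classical Poincar\'e inequality give $\sup_n\lVert u_n\rVert_{L^2(\Omega)}<\infty$ (this is exactly the adaptation of \cref{lem:poincare} the paper sketches at the end of \cref{sec:compact}), \cref{lem:compactness} then gives relative compactness, and \cref{lem:minimizer} closes the argument. Your extra observation that $\inf_n F_n=0=\min\sigma_R F$ (since $a\equiv0$, $F_n(0)=0$, and coercivity forces $E_n\ge0$) is a legitimate shortcut that makes the final step concrete; one small imprecision is that the compact-support hypothesis on $\rho_{\delta_n,x}$ is not what drives the Poincar\'e-type bound — it enters only in the limsup construction of \cref{thm:gamma-con-general}, while the boundedness of $\{u_n\}$ uses only coercivity plus the interior-energy bound.
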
}

\gwy{
\section{Quantitative discussion}
\label{sec:quant-discussion}
    All results in this paper are based on $\Gamma$-\\
    convergence. As shown by \cref{lem:minimizer}, this concept can lead to the convergence of minimizers, but do not provide information about the convergence rate. Such weak conclusions allow us to consider a general class of nonlocal model. It is absolutely better if one can derive some quantitative results without strengthen our assumptions. But it may be difficult since the kernels of boundary and interior term do not need any connection. Nevertheless, in some special cases, if the kernels are carefully designed and have good linkage, one can obtain a specific nonlocal model with stronger quantitative properties. For example, as mentioned in \cref{remark1}, \cite{wang2023nonlocal} studies the nonlocal model \eqref{eq:dis-fun-general} with 
    \[\rho_{\delta_n,x}(y,z)=\frac{2}{\delta^2\bar{\bar{\omega}}_{\delta_n}(x)}\bar{R}_{\delta_n}(|x-y|)\bar{R}_{\delta_n}(|x-z|)\]
    where 
    \[\bar{\bar{\omega}}_{\delta_n}(x)\coloneqq \int_\Omega\bar{\bar{R}}_{\delta_n}(|x-y|)dy.\]
     And demonstrate the first order $H^1$ convergence of the minimizers (Theorem 3.2 in \cite{wang2023nonlocal}). Select
     \begin{equation*}
     \rho_{\delta_n,x}(y,z)=\frac{4}{\mu_{\delta_n}(x)}\bar{R}_{\delta_n}(|x-y|)\delta_n(|y-z|)
 \end{equation*}
where $\mu_{\delta_n}(x)\coloneqq\min\{2\delta_n,\max\{\delta_n^2,d(x)\}\}$ and $d(x)=\min_{y\in\partial\Omega}|x-y|$. It is shown in \cite{shi2023nonlocal} that \eqref{eq:dis-fun-general} with such $\rho_{\delta_n,x}$ fulfills maximun principle and second order convergence for the minimizers (Theorem 5.1 in \cite{shi2023nonlocal}).

Moreover, except for the convergence rate of minimizers, one may also directly explore the convergence rate of nonlocal functional. In~\cite{chambolle2020convergence}, the author prove that $\frac{\mathcal{F}_h-\mathcal{F}_0}{h^2}$ $\Gamma$-converges to a nonzero functional. $\mathcal{F}_0$, $\mathcal{F}_h$ are defined as follows,
    \begin{equation*}
        \begin{aligned}
            &\mathcal{F}_0(u)\coloneqq\int_{\mathbb{R}^d}\int_{\mathbb{R}^d}K(z)f\left(\left|\nabla u(x)\cdot\frac{z}{|z|}\right|\right)dzdx\\
            &\mathcal{F}_h(u)\coloneqq\int_{\mathbb{R}^d}\int_{\mathbb{R}^d}K_h(y-x)f\left(\frac{|u(y)-u(x)|}{|y-x|}\right)dydx
        \end{aligned}
    \end{equation*}
    where $f(\cdot)$ is a convex function fulfills some conditions. This conclusion can be understood as nonlocal energy $\mathcal{F}_h$ second order converges to $\mathcal{F}_0$. If we choose $f(x)=x^p$ with $p>1$, $\mathcal{F}_h$ is quite similar with the interior term in our nonlocal model. The differences are that we change $\frac{1}{|y-x|}$ into $\frac{1}{h}$ and $\mathbb{R}^d$ into a bounded domain $\Omega$. We believe that these differences are not essential and that similar conclusion can be demonstrated in the cases we consider. However, much of the proof of this conclusion may be devoted to the treatment of interior terms, and the specific form of kernel in the boundary term has little influence. This goes against our topic about boundary terms and may be more suitable as future work. Specifically, consider the nonlocal model (2.10) for Theorem 2.3,
    \begin{equation*}
		\begin{aligned}
			F_n(u)&=\frac{1}{\delta_n^2}\int_\Omega\int_\Omega R_{\delta_n}(|x-y|)|u(x)-u(y)|^2dxdy
			+E_n(u,0)\\
   &=:I_{\delta_n}(u)+E_n(u,0).
		\end{aligned}
    \end{equation*}
    Define
    \[I_0(u)=\int_{\mathbb{R}^d}\int_\Omega R(|z|)|\nabla u(x)\cdot z|^pdxdz.\]
    Suppose that with a similar method in~\cite{chambolle2020convergence}, one can prove that $\frac{I_{\delta_n}-I_0}{\delta_n^2}$ $\Gamma$-converges to a nonzero functional $\mathcal{I}(u)$. With the coercivity condition in Theorem 2.3, there exists a sequence of positive constants $\{C_n\}$ tending to zero such that
    \[C_nE_n(u,0)\geq\lVert\tilde{u}\rVert_{L^2(\partial\Omega)}\]
    for all $u\in L^2(\Omega).$ Combine with Lemma 4.3, it can be proved that $\frac{F_n-I_0}{\delta_n^2}=\frac{I_{\delta_n}-I_0}{\delta_n^2}+\frac{1}{\delta_n^2}E_n(u,0)$ $\Gamma$-converges to
    \begin{equation*}
		\tilde{\mathcal{I}}(u)=\left\{\begin{array}{cc}
			\mathcal{I}(u)&\text{if }Tu\equiv 0\text{ on }\partial\Omega,\\
			\infty&\text{otherwise}
		\end{array}\right.
    \end{equation*}
    when $\frac{C_n}{\delta_n^2}\rightarrow 0.$ So the specific form of boundary term $E_n$ does not matter. While proving the conclusion that $\frac{I_{\delta_n}-I_0}{\delta_n^2}$ $\Gamma$-converges to a nonzero functional 
    requires much more effort, especially for analyze near the boundary of $\Omega$.

    Even if we prove the $\Gamma$-convergence above, there is still a gap for the convergence of minimizers. However, such convergence of functionals may derive the convergence of minimizers with some stronger assumptions. For example, a simple situation is that $\frac{I_{\delta_n}-I_0}{\delta_n^2}$ uniformly converges to a bounded functional $\mathcal{I}(u)$ and $I_0$ is strongly convex. In this case, we can obtain the second order convergence of the minimizers. This gap is also one of the obstacles we hope to overcome in future work.
} 
	\section{Preliminaries}
	\label{sec:preliminaries}
For easy reference, we first recall
 the concept of $\varGamma$-\\
 convergence 
or functionals. We then present a technical lemma concerning the nonlocal energy with the kernel rescaled by different constants.
 \subsection{\texorpdfstring{$\varGamma$}--Convergence}
The $\varGamma$-convergence 
proposed by De Giorgi is often used to study 
 the convergence of minimizers of functionals under compactness assumptions. 
 More detailed overviews about the $\varGamma$-convergence can be found in~\cite{braides2002gamma,dal2012introduction}.
 
	\begin{definition}[$\varGamma$-convergence]
		\label{def:gamma}
		Let $X$ be a metric space and $F_n:X\rightarrow\mathbb{R}\cup\{-\infty,\infty\}$ be a sequence of functionals on $X$. We say that $F_n$ $\varGamma$-converges to $F:X\rightarrow\mathbb{R}\cup\{-\infty,\infty\}$,
  	which is also denoted by $F_n\stackrel{\Gamma}{\rightarrow}F(n\rightarrow\infty)$, if
		\begin{enumerate}
			\item[(1)](liminf inequality) for any sequence $\{x_n\}_{n\in\mathbb{N}}\subset X$ converging to $x\in X$, we have
		$\displaystyle \liminf_{n\rightarrow\infty}F_{n}(x_n)\geq F(x).$
			\item[(2)](limsup inequality) for any $x\in X$, there exists a sequence $\{x_n\}_{n\in\mathbb{N}}\subset X$ converging to $x$ such that
$\displaystyle \limsup_{n\rightarrow\infty}F_n(x_n)\leq F(x).$
		\end{enumerate}
	\end{definition}
   The following lemma reveals the connection between $\varGamma$-convergence and the convergence of minimizers, which is also applied in \cite{garcia2016continuum,slepcev2019analysis,roith2022continuum,gan2022non}.
We include here for completeness and easy reference.
	\begin{lemma}[Convergence of minimizers]
		\label{lem:minimizer}
		Let $X$ be a metric space and $F_n:X\rightarrow[0,\infty]$ $\varGamma$-converges to $F:X\rightarrow[0,\infty]$ which is not identically $\infty$. If there exists a relatively compact sequence $\{x_n\}_{n\in\mathbb{N}}\subset X$ such that
		\[\lim_{n\rightarrow\infty}(F_n(x_n)-\inf_{x\in X}F_n(x))=0\]
		then we have
		\[\lim_{n\rightarrow\infty}\inf_{x\in X}F_n(x)=\min_{x\in X}F(x)\]
		and any cluster point of $\{x_n\}_{n\in\mathbb{N}}$ is a minimizer of $F$.
	\end{lemma}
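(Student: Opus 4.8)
The plan is to run the classical two-sided comparison between the infima $m_n \coloneqq \inf_{x\in X} F_n(x)$ and $m\coloneqq \inf_{x\in X} F(x)$; note that since $F\colon X\to[0,\infty]$ is not identically $\infty$, we have $m\in[0,\infty)$, and the hypothesis $F_n(x_n)-m_n\to 0$ presupposes $m_n<\infty$ and $F_n(x_n)<\infty$ for all large $n$. First I would prove the cheap inequality $\limsup_{n\to\infty}m_n\le m$: fix an arbitrary $x\in X$; by the limsup inequality in \cref{def:gamma}(2) there is a recovery sequence $y_n\to x$ with $\limsup_{n\to\infty}F_n(y_n)\le F(x)$, and since $m_n\le F_n(y_n)$ for every $n$, we get $\limsup_{n\to\infty}m_n\le F(x)$; taking the infimum over $x\in X$ gives $\limsup_{n\to\infty}m_n\le m$.

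The substantive step is the reverse inequality $\liminf_{n\to\infty}m_n\ge m$, where the relatively compact sequence $\{x_n\}$ and the near-minimality hypothesis enter. Here I would argue subsequentially: let $\ell$ be the limit of $m_{n_j}$ along any convergent subsequence of $\{m_n\}$; by relative compactness of $\{x_n\}$ I can pass to a further subsequence with $x_{n_{j_l}}\to x^\ast$ in $X$, and then $F_{n_{j_l}}(x_{n_{j_l}})=m_{n_{j_l}}+\big(F_{n_{j_l}}(x_{n_{j_l}})-m_{n_{j_l}}\big)\to\ell$ because the bracketed term vanishes; the liminf inequality in \cref{def:gamma}(1) then yields $m\le F(x^\ast)\le\ell$. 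Since every subsequential limit of $\{m_n\}$ is therefore $\ge m$, we conclude $\liminf_{n\to\infty}m_n\ge m$, hence $\lim_{n\to\infty}m_n=m$; moreover $F(x^\ast)=m$ for the point $x^\ast$ produced above, so the infimum defining $m$ is attained and may be written $\min_{x\in X}F(x)$.

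It remains to treat an arbitrary cluster point $x^\ast$ of $\{x_n\}$: choosing $x_{n_k}\to x^\ast$, the liminf inequality together with $F_n(x_n)-m_n\to0$ and the just-established convergence $m_n\to m$ gives $F(x^\ast)\le\liminf_{k\to\infty}F_{n_k}(x_{n_k})=\lim_{k\to\infty}m_{n_k}=m$, so $F(x^\ast)=m$ and $x^\ast$ is a minimizer of $F$. I do not anticipate a real obstacle, as this is a textbook fact about $\varGamma$-convergence; the only point that warrants care is deducing convergence of the \emph{whole} sequence $\{m_n\}$ from the subsequential bounds, which is exactly where relative compactness (not mere boundedness) of $\{x_n\}$ is used.
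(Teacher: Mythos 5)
Your proof is correct and follows the same overall two-sided strategy as the paper: one inequality from the limsup/recovery-sequence half of $\varGamma$-convergence, the other from the liminf half combined with relative compactness, then squeezing. The one place where you are actually \emph{more} careful than the paper: the paper writes directly $\liminf_{n\to\infty}F_n(x_n)\ge F(\tilde{x})$ for an arbitrary cluster point $\tilde{x}$, but the liminf inequality in \cref{def:gamma} is stated only for \emph{convergent} sequences, so as written that step silently invokes a passage to subsequences (justified by the standard fact that $\varGamma$-convergence passes to subsequences, but not spelled out). Your version instead first extracts a subsequence of $\{m_n\}$ realizing a subsequential limit, then a further subsequence of $\{x_n\}$ that converges, and only then applies the liminf inequality along that subsequence — this fills the gap explicitly. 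The closing remark that relative compactness (as opposed to mere boundedness) is what supplies convergent subsequences in a general metric space, and that the convergence of the whole sequence $\{m_n\}$ is obtained because every subsequential limit is bounded below by $m$ and $\limsup m_n\le m$, is exactly the right thing to flag.
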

	\begin{proof}
		For any $y\in X$, we know that there exists a sequence $\{y_n\}_{n\in\mathbb{N}}\subset X$ satisfying the limsup inequality. So we have
		\[F(y)\geq\limsup_{n\rightarrow\infty}F_n(y_n)\geq\limsup_{n\rightarrow\infty}\inf_{x\in X}F_n(x)\]
		which yields
		\[\min_{x\in X}F(x)\geq\limsup_{n\rightarrow\infty}\inf_{x\in X}F_n(x)\]
		On the other hand, consider the sequence $\{x_n\}_{n\in\mathbb{N}}\subset X$ mentioned in the assumption, let $\tilde{x}$ be one of the cluster points of $\{x_n\}_{n\in\mathbb{N}}$, using the liminf ineuqality, we get
		\[\liminf_{n\rightarrow\infty}\inf_{x\in X}F_n(x)=\liminf_{n\rightarrow\infty}F_n(x_n)\geq F(\tilde{x})\geq\min_{x\in X}F(x)\]
		Therefore,
		\[\limsup_{n\rightarrow\infty}\inf_{x\in X}F_n(x)\leq\min_{x\in X}F(x)\leq F(\tilde{x})\leq\liminf_{n\rightarrow\infty}\inf_{x\in X}F_n(x)\]
		and we can get the conclusion.
	\end{proof}
	\subsection{Relation between kernels of different scales}
The following technical\\ lemma clarifies that when the kernel is rescaled by a constant factor, the 
 nonlocal energy remains
 uniformly controlled by the original one. This conclusion is also indispensable in \cite{shi2017convergence,gan2022non,du2022nonlocal}.
	\begin{lemma}
		\label{lem:kernel_diff_scales}
		\gwy{Let $R$ be a kernel satisfying (K1)-(K3). $p>1$, $m>0$ are finite constants and $\Omega$ is a Lipschitz bounded domain in $\mathbb{R}^d$. Then there exists a constant $C$ depending on $m$, such that
		for all $\delta>0$ and $u\in L^p(\Omega)$,}
		\begin{equation*}
			\begin{aligned}
		&		\int_\Omega\int_\Omega R_{\delta}(|x-y|)|u(x)-u(y)|^pdxdy\\
    &\qquad  \leq C \int_\Omega\int_\Omega R_{m\delta}(|x-y|)|u(x)-u(y)|^pdxdy.
			\end{aligned}
		\end{equation*}
  \end{lemma}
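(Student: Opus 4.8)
The plan is to reduce the claim to the case $m > 1$ (the case $m \le 1$ being trivial, since then $R_{m\delta} \ge c R_\delta$ pointwise after accounting for the normalization, or more carefully one simply swaps the roles of $\delta$ and $m\delta$ and applies the $m>1$ case with constant $1/m$). So assume $m>1$. The key idea is a \emph{chaining} argument: we want to bound increments $|u(x)-u(y)|^p$ with $|x-y|$ at scale $\delta$ by increments at scale $m\delta$, by inserting a chain of intermediate points. Since $R$ is monotonically decreasing (K2) and supported in $[0,r_R^2]$ (K3), the kernel $R_\delta(|x-y|)$ is comparable (up to the factor $\delta^{-d}$) to a constant multiple of $\mathbf{1}_{|x-y|\le c\delta}$ from below on a slightly smaller ball and bounded above by $R(0)\delta^{-d}\mathbf{1}_{|x-y|\le r_R\delta}$; so morally it suffices to prove
\[
\frac{1}{\delta^d}\int_\Omega\int_{\Omega\cap B(x,r_R\delta)}|u(x)-u(y)|^p\,dy\,dx
\le C\,\frac{1}{(m\delta)^d}\int_\Omega\int_{\Omega\cap B(x,c m\delta)}|u(x)-u(y)|^p\,dy\,dx,
\]
for suitable fixed $c>0$ depending only on $R$.

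First I would handle the geometry. Because $\Omega$ is a Lipschitz domain, it satisfies a uniform interior cone/corkscrew condition, so for $x,y\in\Omega$ with $|x-y|\le r_R\delta$ one can find a chain $x=z_0,z_1,\dots,z_N=y$ of points in $\Omega$ with $N$ bounded by a constant depending only on $m$, $r_R$ and the Lipschitz character of $\Omega$, such that consecutive points satisfy $|z_{i}-z_{i+1}|\le c\,m\delta$ and, crucially, such that each $z_{i+1}$ ranges over a set of measure $\gtrsim (m\delta)^d$ as we integrate. Using the triangle inequality and convexity of $t\mapsto t^p$,
\[
|u(x)-u(y)|^p \le N^{p-1}\sum_{i=0}^{N-1}|u(z_i)-u(z_{i+1})|^p.
\]
Then I would integrate this inequality over all admissible chains: integrating each term $|u(z_i)-u(z_{i+1})|^p$ against the product of the appropriate ``short-range'' kernels in the intermediate variables and using that each such kernel has total mass of order one (by (K3), $\int_\Omega R_{m\delta}(|z-w|)\,dw \lesssim 1$, and from below $\gtrsim 1$ on a ball away from $\partial\Omega$ of the correct size), every term on the right collapses to a constant multiple of $\int_\Omega\int_\Omega R_{m\delta}(|z-w|)|u(z)-u(w)|^p\,dw\,dz$, while the left side reproduces $\int_\Omega\int_\Omega R_\delta |u(x)-u(y)|^p$. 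The number $N$ and all the comparison constants depend only on $m$, $p$, $r_R$, $R(0)$, and $\Omega$, which is exactly the claimed dependence.

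The main obstacle is the behavior near $\partial\Omega$: for $x$ close to the boundary, a naive midpoint $z_1=(x+y)/2$ need not have a full ball of ``neighbors'' inside $\Omega$, so the lower bound $\int_\Omega R_{m\delta}(|z_1-w|)\,dw \gtrsim 1$ can fail if $z_1$ is within $O(\delta)$ of $\partial\Omega$. The fix is to push the intermediate points slightly into the interior along a cone direction guaranteed by the Lipschitz condition: replace the midpoint by $z_1 = \tfrac12(x+y) + \theta m\delta\, e$ for a fixed admissible cone direction $e$ and small fixed $\theta>0$, at the cost of lengthening the chain by a bounded number of steps. One must check that after this perturbation the consecutive distances are still $\le c\,m\delta$ and that each intermediate point lies at distance $\gtrsim m\delta$ from $\partial\Omega$, uniformly in $\delta$; this is where the uniform Lipschitz character of $\Omega$ is used and is the only delicate part of the argument. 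Everything else is the routine convexity-plus-Fubini bookkeeping sketched above.
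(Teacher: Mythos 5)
You have the two cases exactly backwards. The easy case is $m \geq 1$: since $R$ is monotone decreasing and $m\delta \geq \delta$, one has $R(s^2/\delta^2) \leq R(s^2/(m\delta)^2)$ for all $s$, hence $R_\delta(s) \leq m^d R_{m\delta}(s)$ pointwise, and the inequality holds immediately with $C = m^d$; this is precisely how the paper dispatches that case. The genuinely hard case is $m < 1$: then $\mathrm{supp}(R_{m\delta}) \subset [0,\, m\delta\, r_R]$ is strictly smaller than $\mathrm{supp}(R_\delta) \subset [0,\, \delta\, r_R]$, so for pairs $(x,y)$ with $m\delta\, r_R < |x-y| \leq \delta\, r_R$ the right-hand kernel vanishes while the left-hand one need not, and no pointwise bound $R_\delta \leq C R_{m\delta}$ can hold. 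Both of your reasons for dismissing $m \leq 1$ therefore fail: the claimed pointwise bound $R_{m\delta} \geq c\, R_\delta$ is false on exactly that intermediate range, and ``swapping the roles of $\delta$ and $m\delta$'' while invoking the $m>1$ case only yields $\int_\Omega\int_\Omega R_{m\delta}|u(x)-u(y)|^p\,dxdy \leq C\int_\Omega\int_\Omega R_{\delta}|u(x)-u(y)|^p\,dxdy$, which is the reverse of the inequality you need when $m<1$. Ironically, the chaining argument you then develop in detail is the right tool and is essentially what the paper refers to as a telescoping argument, but it belongs to the case $m<1$: there one needs on the order of $1/m$ intermediate steps, each of length at most $c\,m\delta$, to traverse a gap of order $\delta$, whereas for $m>1$ a single step already suffices and chaining is superfluous. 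If you simply relabel the cases, the substance of your sketch --- convexity of $t\mapsto t^p$, Fubini over chains, pushing intermediate points into $\Omega$ along a cone direction provided by the Lipschitz character of the boundary --- is the correct blueprint for the nontrivial half of the lemma.
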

		\begin{proof}
			When $m\geq 1$, 
  by the monotone descreasing property of $R$, 
  we have 
  \begin{equation*}
	R\left(\frac{|x-y|^2}{(m^{-1}\delta)^2}\right)\leq   R\left(\frac{|x-y|^2}{\delta^2}\right)	\leq R\left(\frac{|x-y|^2}{(m\delta)^2}\right),
			\end{equation*}
Thus, 
   			\begin{equation*}
   m^{-d}  R_{m^{-1}\delta}(|x-y|)	\leq    R_{\delta}(|x-y|)	\leq 
     m^d R_{m\delta}(|x-y|).
			\end{equation*}
The conclusion for $m\geq 1$ then follows easily and the case for $m<1$ can be obtained from a telescoping argument, similar to those presented in \cite{shi2017convergence,du2022nonlocal,scott2023nonlocal}.
 \end{proof}

    
	\section{\texorpdfstring{$\varGamma$}--Convergence of nonlocal functionals}
	\label{sec:proof}
		\subsection{Nonlocal model for Dirichlet problem}
		\label{subsec:proof1}
To prepare for
  the proof of \cref{thm:gamma-con}, we first present
  some lemmas.
   The first one is about the property of the convolution between a kernel and a sequence of $L^p$ functions which has a limit. It is well known that for a sequence of positive constants $\delta_n\rightarrow 0$, a $L^1$ kernel function $R$ and a $L^p$ function $u$, the equality
		\begin{equation}
  \label{eq:conv-limit}
		\lim_{n\rightarrow\infty}\lVert R_{\delta_n}*u-C_R u\rVert_{L^p}=0,
		\end{equation}
		holds for any $1\leq p<\infty$, where $R_{\delta_n}$ is the scaled kernel of $R$ and 
  $C_R\coloneqq \int_{\mathbb{R}^d}R(|y|^2)dy$ 
  is a constant only 
dependent on $R$.  \qd{Replacing $u$ in the above by 
  a sequence $\{u_n\}$ that converges to $u$}, we have a similar conclusion: 
		\begin{lemma}
			\label{lem:fou}
			Suppose that $\Omega$ is a domain in $\mathbb{R}^d$. $R$ is a kernel function satisfying $(K1)\text{-}(K3)$. $1< p<\infty$. For a positive constant $\delta$, $R_\delta(x)\coloneqq\frac{1}{\delta^d}R(\frac{x^2}{\delta^2})$. $\delta_n\rightarrow 0$ and $u_n\rightarrow u$ in $L^p(\Omega)$ as $n\rightarrow\infty$. Then,
			\[\left\lVert\int_\Omega R_{\delta_n}(|x-y|)(u_n(x)-u_n(y))dy\right\rVert_{L^p(\Omega)}\longrightarrow 0,\text{ as }n\rightarrow \infty.\]
		\end{lemma}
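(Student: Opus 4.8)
\emph{Proof proposal.} The plan is to rewrite the integral as the difference of a ``diagonal'' term and a convolution term, and then to compare each with its local counterpart $C_Ru$, where $C_R=\int_{\mathbb{R}^d}R(|z|^2)\,dz$. Setting $\omega_{\delta_n}(x)\coloneqq\int_\Omega R_{\delta_n}(|x-y|)\,dy$, one has
\[
\int_\Omega R_{\delta_n}(|x-y|)(u_n(x)-u_n(y))\,dy = u_n(x)\,\omega_{\delta_n}(x) - \int_\Omega R_{\delta_n}(|x-y|)u_n(y)\,dy .
\]
Extending every function in $L^p(\Omega)$ by zero to $\mathbb{R}^d$, the second term is $(R_{\delta_n}\ast u_n)(x)$ restricted to $\Omega$; moreover $\|R_{\delta_n}\|_{L^1(\mathbb{R}^d)}=\int_{\mathbb{R}^d}R(|z|^2)\,dz=C_R$, so Young's inequality yields the uniform bound $\|R_{\delta_n}\ast v\|_{L^p}\le C_R\|v\|_{L^p}$, and clearly $0\le\omega_{\delta_n}(x)\le C_R$ for all $x$.

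Next I would decompose
\[
u_n\omega_{\delta_n} - R_{\delta_n}\ast u_n = \underbrace{(u_n-u)\,\omega_{\delta_n}}_{(\mathrm{I})} + \underbrace{u\,(\omega_{\delta_n}-C_R)}_{(\mathrm{II})} + \underbrace{(C_Ru - R_{\delta_n}\ast u)}_{(\mathrm{III})} + \underbrace{R_{\delta_n}\ast(u-u_n)}_{(\mathrm{IV})},
\]
and estimate each piece in $L^p(\Omega)$. Terms $(\mathrm{I})$ and $(\mathrm{IV})$ are controlled by the uniform bounds above: $\|(\mathrm{I})\|_{L^p}\le C_R\|u_n-u\|_{L^p}\to0$ and $\|(\mathrm{IV})\|_{L^p}\le C_R\|u-u_n\|_{L^p}\to0$. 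Term $(\mathrm{III})$ tends to $0$ by the classical approximate-identity fact \eqref{eq:conv-limit} applied to the zero-extension of $u$ (restricting the $L^p(\mathbb{R}^d)$ bound to $\Omega$ only decreases the norm).

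The one point needing a little care is $(\mathrm{II})$, which also isolates the only genuine subtlety of the lemma: $\omega_{\delta_n}(x)$ is \emph{not} equal to $C_R$ near $\partial\Omega$. However, since $\Omega$ is open, a.e.\ $x\in\Omega$ has positive distance to $\partial\Omega$, so by (K3) we have $B(x,r_R\delta_n)\subset\Omega$ for all large $n$, whence $\omega_{\delta_n}(x)=\int_{\mathbb{R}^d}R_{\delta_n}(|x-y|)\,dy=C_R$; thus $\omega_{\delta_n}(x)\to C_R$ for a.e.\ $x$. Combined with the domination $|u(x)(\omega_{\delta_n}(x)-C_R)|^p\le(2C_R)^p|u(x)|^p\in L^1(\Omega)$, the dominated convergence theorem gives $\|(\mathrm{II})\|_{L^p}\to0$. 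Assembling the four estimates via the triangle inequality yields the claim. I expect the ``obstacle'' to be entirely this boundary effect of $\omega_{\delta_n}$, which is harmless because the bad set shrinks to the measure-zero boundary; everything else is the standard uniform-bound-plus-density argument requiring no input beyond \eqref{eq:conv-limit} and Young's inequality.
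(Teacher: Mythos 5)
Your proposal is correct and follows essentially the same route as the paper: the same splitting into the four pieces $R_{\delta_n}*(u_n-u)$, $\omega_{\delta_n}(u-u_n)$, $(\omega_{\delta_n}-C_R)u$, and $R_{\delta_n}*u-C_Ru$, with Young's inequality and the classical approximate-identity limit handling the routine terms. The only cosmetic difference is in term (II): the paper observes directly that $\omega_{\delta_n}=C_R$ off a boundary layer of vanishing measure, whereas you invoke a.e.\ pointwise convergence plus dominated convergence --- both are standard and equivalent here.
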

		\begin{proof}
Let  $C_{R,n}(x)\coloneqq 
  \int_{\Omega} R_{\delta_n}(|x-y|)dy$ for $x\in \Omega$. Note that
  \gwy{\begin{equation*}
      \begin{aligned}
& \|R_{\delta_n}*u_n-C_{R,n} u_n\|_{L^p(\Omega)}
 \leq \|R_{\delta_n}* (u_n-u)\|_{L^p(\Omega)}  +\|C_{R,n} (u- u_n)\|_{L^p(\Omega)}\\
&\qquad +\| (C_{R,n}-C_R )u \|_{L^p(\Omega)}
+\|R_{\delta_n}* u - C_R u\|_{L^p(\Omega)}.    
\end{aligned}
  \end{equation*}}

The first two terms go to $0$ by the convergence of $u_n$ to $u$ and boundedness of convolution with $R_{\delta_n}$ and the uniform bound of the function $\|C_{R,n}\|_{L^\infty(\Omega)}\leq C_R$, \gwy{while the third term goes to zero since $C_{R,n}(x)=C_R$ except for $x$ in the layer $\{x\in \Omega, \text{dist}(x,\Omega^c)< r_R \delta_n\}$ whose measure goes to $0$.} The last term follows from \eqref{eq:conv-limit}.
  \end{proof}

  		In the functionals \eqref{eq:dis-fun} under consideration, the $\varGamma$-convergence of the first term has been studied (\cite{slepcev2019analysis} Lemma 4.6). We will directly use the liminf part, which is listed as follows:
		\begin{lemma}
			\label{lem:gamma-con}
			Let $\Omega,R,\sigma_R$ satisfy the same conditions as \cref{thm:gamma-con}. $1<p<\infty$. $u_n\rightarrow u$ in $L^p(\Omega)$,  
   $\delta_n\rightarrow 0$, then 
			\[\liminf_{n\rightarrow\infty}\frac{1}{\delta_n^p}\int_\Omega\int_\Omega R_{\delta_n}(|x-y|)|u_n(x)-u_n(y)|^pdxdy\geq \gwy{\sigma_R}E(u),\]
			where
			\gwy{\begin{equation*}
				E(u)=\left\{\begin{array}{cc}
					\int_\Omega|\nabla u(x)|^pdx&\text{if }u\in W^{1,p},\\
					\infty&\text{otherwise.}
				\end{array}\right.
			\end{equation*}}
		\end{lemma}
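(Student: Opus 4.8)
\textbf{Proof proposal for Lemma~\ref{lem:gamma-con}.}
The statement is essentially the liminf half of a known $\varGamma$-convergence result (the nonlocal-to-local convergence of $p$-Dirichlet energies), so the plan is to reduce to that known statement rather than reprove it from scratch. First I would dispose of the trivial case: if $\liminf_{n\to\infty}\frac{1}{\delta_n^p}\int_\Omega\int_\Omega R_{\delta_n}(|x-y|)|u_n(x)-u_n(y)|^p\,dxdy=+\infty$, there is nothing to prove. Otherwise, pass to a subsequence (not relabeled) along which the liminf is achieved as a finite limit and stays bounded by some constant $M$.

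The key step is to invoke the compactness-and-lower-bound machinery for nonlocal functionals of this type. Along the bounded subsequence, standard compactness results for these nonlocal energies (see \cite{BBM01,slepcev2019analysis,ponce2004new}) guarantee that any $L^p$-convergent sequence with uniformly bounded nonlocal energy has its limit in $W^{1,p}(\Omega)$; since $u_n\to u$ in $L^p(\Omega)$ by hypothesis, this forces $u\in W^{1,p}(\Omega)$, so that $E(u)=\int_\Omega|\nabla u|^p\,dx$ and the ``otherwise'' branch is never the operative one when the left side is finite. Then the liminf inequality
\[
\liminf_{n\to\infty}\frac{1}{\delta_n^p}\int_\Omega\int_\Omega R_{\delta_n}(|x-y|)|u_n(x)-u_n(y)|^p\,dxdy\;\geq\;\sigma_R\int_\Omega|\nabla u(x)|^p\,dx
\]
is exactly the content of \cite[Lemma~4.6]{slepcev2019analysis} (with the identification that their normalization constant equals our $\sigma_R=\int_{\mathbb{R}^d}R(|z|^2)|z\cdot e_1|^p\,dz$, obtained by averaging $|\nabla u(x)\cdot z|^p$ over directions against the radial profile $R$). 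Restoring the original sequence from the subsequence argument completes the proof, since the bound holds for the liminf-realizing subsequence and hence for the full liminf.

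For completeness I would also sketch the self-contained route in case one does not wish to cite the result as a black box: localize and freeze coefficients by covering $\Omega$ with small balls, use that on each ball $u_n$ is close in $L^p$ to $u$, and on a ball where $u$ is smooth (by density, mollifying $u$) estimate $|u_n(x)-u_n(y)|\geq |u(x)-u(y)|-|u_n(x)-u(x)|-|u_n(y)-u(y)|$, expand $u(x)-u(y)\approx \nabla u(z)\cdot(x-y)$ after the change of variables $y=x+\delta_n z$, and recognize $\frac{1}{\delta_n^p}R_{\delta_n}(|x-y|)|x-y|^p\,dy = R(|z|^2)|\delta_n z|^p/\delta_n^p\,dz\,\delta_n^d\cdot\delta_n^{-d}=R(|z|^2)|z|^p\,dz$ up to the directional factor, whose integral over $z$ gives $\sigma_R$; lower semicontinuity of convex integrands (Jensen plus Fatou) then yields the bound. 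The main obstacle in this self-contained route is controlling the error terms involving $u_n-u$ uniformly near $\partial\Omega$ and at the transition between the frozen-coefficient balls — precisely the delicate boundary-layer analysis that the cited lemma already handles — which is why the cleanest plan is to cite \cite[Lemma~4.6]{slepcev2019analysis} directly and only verify the constant $\sigma_R$ matches.
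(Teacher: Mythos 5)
Your proposal is correct and matches the paper's approach: the paper also treats this lemma as a direct citation of Lemma~4.6 in \cite{slepcev2019analysis} (the liminf half of the nonlocal-to-local $\varGamma$-convergence of $p$-Dirichlet energies) and provides no further proof. Your additional remarks on the subsequence reduction, the constant $\sigma_R$, and the self-contained sketch are reasonable but go beyond what the paper does.
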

		The main difference between \cref{thm:gamma-con} and the Lemma 4.6 in \cite{slepcev2019analysis} is the additional term about the boundary $\partial\Omega$. To resolve it, the trace theorem that the $L^p(\partial\Omega)$ norm of the trace of a $W^{1,p}(\Omega)$ function can be controlled by the $W^{1,p}$ norm is useful. While the strong $W^{1,p}$ convergence from $u_n$ to $u$ does not follow directly from the derivation of the liminf inequality, it turns out that, with the following lemma, the weak $W^{1,p}$ convergence is enough.
		\begin{lemma}
			\label{lem:trace} 
			Suppose that $\Omega$ is a Lipschitz bounded domain and $1<p<\infty$. Let $\{u_n\}\subset W^{1,p}(\Omega)$ 
   satisfy that $\sup_n\lVert u_n\rVert_{W^{1,p}(\Omega)}<\infty,$ and $u_n\rightarrow u$ in $L^p(\Omega)$ for some $u\in W^{1,p}(\Omega)$ with $\lVert Tu_n\rVert_{L^p(\partial\Omega)}\rightarrow 0$ as $n\rightarrow\infty$. Then we have 
   $Tu=0$ on $\partial\Omega$, in the sense of trace space.
		\end{lemma}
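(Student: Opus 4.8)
The plan is to establish that the weak limit (in $W^{1,p}(\Omega)$) of the sequence $\{u_n\}$ has vanishing trace, using the $L^p$-strong and $W^{1,p}$-bounded hypotheses together with weak lower semicontinuity of the trace. First I would note that since $\sup_n\|u_n\|_{W^{1,p}(\Omega)}<\infty$ and $1<p<\infty$, the space $W^{1,p}(\Omega)$ is reflexive, so a subsequence $\{u_{n_k}\}$ converges weakly in $W^{1,p}(\Omega)$ to some $v\in W^{1,p}(\Omega)$. Because $u_n\to u$ strongly in $L^p(\Omega)$, the weak limit must agree with $u$, i.e.\ $v=u$; hence the full sequence (every subsequence has a further subsequence converging weakly to the same $u$) converges weakly in $W^{1,p}(\Omega)$ to $u$, and in particular $u\in W^{1,p}(\Omega)$ as assumed.

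Next I would invoke continuity of the trace operator $T\colon W^{1,p}(\Omega)\to L^p(\partial\Omega)$, which holds for Lipschitz bounded $\Omega$. A bounded linear operator between Banach spaces is weak-to-weak continuous, so $Tu_n\rightharpoonup Tu$ weakly in $L^p(\partial\Omega)$. Combined with the hypothesis $\|Tu_n\|_{L^p(\partial\Omega)}\to 0$, which says $Tu_n\to 0$ strongly (hence weakly) in $L^p(\partial\Omega)$, uniqueness of weak limits gives $Tu=0$. Alternatively, and perhaps more transparently, one can use weak lower semicontinuity of the norm: $\|Tu\|_{L^p(\partial\Omega)}\le\liminf_{n\to\infty}\|Tu_n\|_{L^p(\partial\Omega)}=0$, so $Tu=0$ in $L^p(\partial\Omega)$, which is exactly the claim that $u=0$ on $\partial\Omega$ in the trace sense.

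The one point requiring slight care — and the closest thing to an obstacle — is justifying that the weak $W^{1,p}$ limit coincides with the strong $L^p$ limit $u$. This follows because weak convergence in $W^{1,p}(\Omega)$ implies weak convergence in $L^p(\Omega)$ (the inclusion $W^{1,p}\hookrightarrow L^p$ is continuous, hence weak-to-weak continuous), and a sequence cannot converge weakly in $L^p$ to two different limits; since $u_n\to u$ strongly in $L^p$, it converges weakly to $u$, forcing the weak $W^{1,p}$ limit to be $u$ as well. A standard subsequence-of-subsequences argument then upgrades this from a subsequence to the whole sequence. No compact embedding or special structure of $\partial\Omega$ beyond the Lipschitz trace theorem is needed, so the argument is short; the main content is simply assembling reflexivity, trace continuity, and weak lower semicontinuity in the right order.
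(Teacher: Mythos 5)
Your proof is correct, and it follows the same overall route as the paper: use reflexivity of $W^{1,p}(\Omega)$ to extract a weakly convergent subsequence, identify the weak $W^{1,p}$ limit with the strong $L^p$ limit $u$, and then pass the vanishing of $Tu_n$ to $Tu$ via the trace theorem. The one place you deviate is the last step. The paper's proof obtains \emph{strong} $L^p(\partial\Omega)$ convergence $Tu_n\to Tu$ by first getting weak convergence in the trace space $W^{1-1/p,p}(\partial\Omega)$ and then invoking the compact embedding $W^{1-1/p,p}(\partial\Omega)\hookrightarrow\hookrightarrow L^p(\partial\Omega)$, after which $\|Tu_n\|_{L^p(\partial\Omega)}\to 0$ forces $Tu=0$. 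You bypass the compact embedding entirely: boundedness of $T\colon W^{1,p}(\Omega)\to L^p(\partial\Omega)$ already gives weak-to-weak continuity, so $Tu_n\rightharpoonup Tu$ in $L^p(\partial\Omega)$, and since $Tu_n\to 0$ strongly (hence weakly) by hypothesis, uniqueness of weak limits (equivalently, weak lower semicontinuity of the norm) yields $Tu=0$. Your version uses strictly less machinery — no compactness of the trace embedding is needed — which is a small but genuine simplification; the paper's route has the side benefit of producing strong $L^p(\partial\Omega)$ convergence of the traces, which is not required for the stated conclusion. Both are sound.
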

		\begin{proof}
 By the reflexivity of  $W^{1,p}(\Omega)$, the trace theorem(see, for example , in \cite{ern2004theory}) and compact imbeddings of Sobolev spaces, we can see that $u$ is the weak limit
of $u_n$ in $W^{1,p}(\Omega)$, and
$Tu$ is both the weak limit of $Tu_n$ in $W^{1-1/p,p}(\partial\Omega)$ and the strong limit in $L^p(\partial\Omega)$. Thus $Tu=0$.
\end{proof}

		When processing the boundary term, as stated in \cref{sec: assump and result}, we actually transform $u_n$ into a more regular form $\{\tilde{u}_n\}$ defined as \eqref{eq:tilde-u}. For the gradient of $\tilde{u}_n$, we have the following $L^p$-estimate, see similar results presented in \cite{scott2023nonlocal}: 
		\begin{lemma}
			\label{lem:L2-estimation}
			\gwy{Let $\Omega,R$ satisfy the same conditions as \cref{thm:gamma-con}. $1<p<\infty$. $\tilde{u}_n$ is defined as \eqref{eq:tilde-u}. We have the following estimate about $\nabla\tilde{u}_n$,}
			\[\lVert\nabla \tilde{u}_n\rVert_{L^p(\Omega)}\leq \frac{C}{\delta_n}\left(\int_\Omega\int_\Omega R_{\delta_n}(|x-y|)|u_n(x)-u_n(y)|^pdxdy\right)^\frac{1}{p},\]
			\gwy{where $C$ is a constant depending only on $\hat{K},R$ and $\Omega$}.  
		\end{lemma}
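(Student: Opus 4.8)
The plan is to differentiate the mollification \eqref{eq:tilde-u}, rewrite the derivative so that it exhibits differences $u_n(z)-u_n(y)$ over $O(\delta_n)$-close points, and then dominate the resulting integral by the nonlocal energy with the kernel $R_{\delta_n}$. First, since $\hat{K}\in C^1$ and $\hat{K}_{\delta_n}(|x-y|)=\delta_n^{-d}\hat{K}(|x-y|^2/\delta_n^2)$ depends smoothly on $x$ (the squared distance removes any singularity at $x=y$), both $\omega_{\delta_n}$ and $\hat{K}_{\delta_n}*u_n$ are $C^1$, so $\nabla\tilde{u}_n$ exists classically on $\Omega$. Differentiating the quotient in \eqref{eq:tilde-u}, inserting $\omega_{\delta_n}(x)=\int_\Omega\hat{K}_{\delta_n}(|x-z|)\,dz$ and $\nabla\omega_{\delta_n}(x)=\int_\Omega\nabla_x\hat{K}_{\delta_n}(|x-z|)\,dz$, and relabelling $y\leftrightarrow z$ in one of the resulting terms, one obtains
\[
\nabla\tilde{u}_n(x)=\frac{1}{\omega_{\delta_n}(x)^2}\int_\Omega\int_\Omega \hat{K}_{\delta_n}(|x-y|)\,\nabla_x\hat{K}_{\delta_n}(|x-z|)\,(u_n(z)-u_n(y))\,dy\,dz.
\]
This is the crucial step: the quotient rule by itself leaves a term containing $u_n(y)$ rather than a difference, and it is exactly the two identities above that turn $u_n(y)$ into the antisymmetrised quantity $u_n(z)-u_n(y)$.

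Next I would estimate pointwise. Using the uniform positive lower bound on $\omega_{\delta_n}$ (noted just after \eqref{eq:tilde-u}; it holds because $\Omega$ is Lipschitz and $\hat{K}$ is bounded below near $0$), together with the crude bounds $\hat{K}_{\delta_n}(|x-y|)\le C\,\Phi_{\delta_n}(|x-y|)$ and $|\nabla_x\hat{K}_{\delta_n}(|x-z|)|\le C\,\delta_n^{-1}\,\Phi_{\delta_n}(|x-z|)$, where $\Phi_\delta(s):=\delta^{-d}\mathbf{1}_{\{s\le r_{\hat{K}}\delta\}}$ (the extra factor $\delta_n^{-1}$ being exactly the one appearing in the claimed estimate), one gets
\[
|\nabla\tilde{u}_n(x)|\le\frac{C}{\delta_n}\int_\Omega\int_\Omega\Phi_{\delta_n}(|x-y|)\,\Phi_{\delta_n}(|x-z|)\,|u_n(z)-u_n(y)|\,dy\,dz.
\]
Since $\int_\Omega\Phi_{\delta_n}(|x-w|)\,dw\le C$ uniformly in $x$, Jensen's inequality applied to the finite measure $\Phi_{\delta_n}(|x-y|)\Phi_{\delta_n}(|x-z|)\,dy\,dz$ moves the $p$-th power inside; then, after fixing a Borel representative of $u_n$, the bound $|u_n(z)-u_n(y)|^p\le 2^{p-1}\big(|u_n(z)-u_n(x)|^p+|u_n(x)-u_n(y)|^p\big)$ combined with Fubini, integrating out the variable absent from each difference against $\Phi_{\delta_n}$ (which costs only an $O(1)$ factor), gives
\[
\int_\Omega|\nabla\tilde{u}_n(x)|^p\,dx\le\frac{C}{\delta_n^p}\int_\Omega\int_\Omega\Phi_{\delta_n}(|x-y|)\,|u_n(x)-u_n(y)|^p\,dx\,dy.
\]

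Finally, I would replace $\Phi_{\delta_n}$ by $R_{\delta_n}$. As $R\not\equiv 0$, its monotonicity gives $R\ge c_0>0$ on some interval $[0,s_0]$, whence $\Phi_\delta(s)\le C\,R_{\beta\delta}(s)$ pointwise with $\beta:=r_{\hat{K}}/\sqrt{s_0}$; applying \cref{lem:kernel_diff_scales} with ratio $1/\beta$ then converts $R_{\beta\delta_n}$ back into $R_{\delta_n}$ at the cost of one further constant. Collecting all constants, $C$ depends only on $\hat{K}$, $R$, $\Omega$ (and $p$), and taking $p$-th roots yields the asserted inequality. I expect the only genuinely delicate points to be the symmetrisation identity and keeping the kernel comparison quantitative; the measure-theoretic bookkeeping (a fixed Borel representative of $u_n$, Fubini) and the tracking of constants are routine.
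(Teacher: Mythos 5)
Your proposal is correct and follows essentially the same strategy as the paper: differentiate the quotient \eqref{eq:tilde-u}, use $\nabla\omega_{\delta_n}(x)=\int_\Omega\nabla_x\hat{K}_{\delta_n}(|x-y|)\,dy$ to produce the symmetrised difference $u_n(y)-u_n(z)$, move the $p$-th power inside via Jensen/H\"older, and convert the resulting compactly-supported kernel to $R_{\delta_n}$ via \cref{lem:kernel_diff_scales}. The only cosmetic difference is that the paper integrates first in $x$ to form $\tilde{K}_{\delta_n}(y,z)=\int_\Omega|\nabla_x\hat{K}_{\delta_n}(|x-y|)|\hat{K}_{\delta_n}(|x-z|)\,dx$ and then exploits its support in $\{|y-z|\le 2r_{\hat{K}}\delta_n\}$ directly, whereas you first split $|u_n(z)-u_n(y)|^p$ by the triangle inequality and then integrate out the auxiliary variable; both routes yield the same bound.
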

		\begin{proof}
			By the definitions,
			\begin{equation*}
				\begin{aligned}				 &\nabla\tilde{u}_n(x) =\frac{1}{\omega_{\delta_n}(x)^2}\bigg(\int_\Omega\int_\Omega \hat{K}_{\delta_n}(|x-y|)\nabla_x \hat{K}_{\delta_n}(|x-z|)u_n(z)\\
					&\qquad\quad - \nabla_x\hat{K}_{\delta_n}(|x-y|) \hat{K}_{\delta_n}(|x-z|)u_n(z)dydz\bigg)\\
				&\quad	=\frac{1}{\omega_{\delta_n}(x)^2}\left(\int_\Omega\int_\Omega  \nabla_x\hat{K}_{\delta_n}(|x-y|) \hat{K}_{\delta_n}(|x-z|)(u_n(y)-u_n(z))dydz\right).
				\end{aligned}
			\end{equation*}
			Therefore, using the H\"older inequality,
			\gwy{\begin{equation*}
				\begin{aligned}			&\lVert\nabla\tilde{u}_n\rVert_{L^p}\leq C_1\left\lVert\int_\Omega\int_\Omega  \nabla_x\hat{K}_{\delta_n}(|x-y|) \hat{K}_{\delta_n}(|x-z|)(u_n(y)-u_n(z))dydz\right\rVert_{L^p}\\
     &\; \leq\frac{C_2}{\delta_n^\frac{1}{p^*}}\left\{\int_\Omega\int_\Omega\left(\int_\Omega|\nabla_x\hat{K}_{\delta_n}(|x-y|)| \hat{K}_{\delta_n}(|x-z|)dx\right)|u_n(y)-u_n(z)|^pdydz\right\}^\frac{1}{p}.
				\end{aligned}
			\end{equation*}
			where $\frac{1}{p^*}=1-\frac{1}{p}$}. Denote \[
   \tilde{K}_{\delta_n}(y,z) \coloneqq
   \int_\Omega\nabla_x\hat{K}_{\delta_n}(|x-y|) \hat{K}_{\delta_n}(|x-z|)dx.\] Recalling the condition $\mathrm{supp}(\hat{K})\subset[0,r_{\hat{K}}]$, it is obvious that $\tilde{K}_{\delta_n}(y,z)=0$ when $|y-z|>2\delta_nr_{\hat{K}}$. According to the regularity of kernel $\hat{K}$ and $R$, we may assume that there exists some constants $k_1,k_2,r_1>0$ such that
			\[\hat{K}(x)\leq k_1,\ |\hat{K}'(x)|\leq k_2,\ \forall x\geq 0, \quad\text{and}\quad
			R(x)\geq r_1,\ \forall x\in[0,\frac{r_R}{2}].\]
			For any $y,z$ with $|y-z|\leq 2\delta_nr_{\hat{K}}$,
			\begin{equation*}
				\begin{aligned}
					\tilde{K}_{\delta_n}(y,z)&\leq \frac{k_2}{\delta_n^{d+1}}\int_\Omega \hat{K}_{\delta_n}(|x-z|)dx
					\leq \frac{k_2}{\delta_n^{d+1}}\int_{\mathbb{R}^d} \hat{K}_{\delta_n}(|x-z|)dx\\
					&\leq \frac{C_{\hat{K}}}{\delta_n^{d+1}}
				\leq \frac{C_{\hat{K},R}}{\delta_n}R_{\frac{4r_{\hat{K}}}{r_R}\delta_n}(|y-z|).
				\end{aligned}
			\end{equation*}
			Hence,
			\begin{equation*}
				\begin{aligned}
					\lVert\nabla\tilde{u}_n\rVert_{L^p}&\leq\frac{C_2}{\delta_n^\frac{1}{p^*}}\left\{\int_\Omega\int_\Omega \tilde{K}_{\delta_n}(y,z)|u_n(y)-u_n(z)|^pdydz\right\}^\frac{1}{p}\\
					&\leq\frac{C_3}{\delta_n}\left\{\int_\Omega\int_\Omega R_{\frac{4r_{\hat{K}}}{r_R}\delta_n}(|y-z|)|u_n(y)-u_n(z)|^pdydz\right\}^\frac{1}{p}\\
					&\leq \frac{C}{\delta_n}\left\{\int_\Omega\int_\Omega R_{\delta_n}(|y-z|)|u_n(y)-u_n(z)|^pdydz\right\}^\frac{1}{p}.\\
				\end{aligned}
			\end{equation*}
			The last inequality holds owing to \cref{lem:kernel_diff_scales}.
		\end{proof}
		With the preparation above, we can start to prove \cref{thm:gamma-con}. Firstly, we simplify the problem into the situation of $a\equiv 0$. To do this, we consider a $W^{1,p}$ function $v$ whose trace is $a$. 
  Then we can transform 
  the functionals by translation,
		\begin{equation*}
			F^v(u)\coloneqq F(v+u),\ F_n^v\coloneqq F_n(v+u).
		\end{equation*}
		Specifically,
		\begin{equation}
			\label{eq:dis-fun2}
			\begin{aligned}
				F_n^v(u)=&\frac{1}{\delta_n^p}\int_\Omega\int_\Omega R_{\delta_n}(|x-y|)|u(x)+v(x)-u(y)-v(y)|^pdxdy\\
				&+\int_{\partial\Omega}\left|\frac{1}{\delta_n}\int_\Omega K_{\delta_n}(|x-y|)(a(x)-u(y)-v(y))dy\right|^pdx
			\end{aligned}
		\end{equation}
  and
		\begin{equation}
			\label{eq:con-fun2}
			F^v(u)=\left\{\begin{array}{cc}
				\int_\Omega|\nabla u(x)+\nabla v(x)|^pdx&\text{if }u\in W^{1,p},Tu\equiv 0\text{ on }\partial\Omega,\\
				\infty&\text{otherwise.}
			\end{array}\right.
		\end{equation}
		It is obvious that the $\varGamma$-convergence from $F_n$ to $F$ is equivalent to the one from $F_n^v$ to $F^v$. And we have the following lemma to show that the latter
		is similar to \cref{thm:gamma-con} when $a\equiv 0$. 
		\begin{lemma}
			\label{lem:approx}
			Let $F^v$, $F_n^v$ be defined as \eqref{eq:dis-fun2},\eqref{eq:con-fun2}. Then it is sufficient for \cref{thm:gamma-con} to prove the $\varGamma$-convergence from $\tilde{F}_n^v$ to $\sigma_RF^v,$ where
			\begin{equation}
				\label{eq:dis-fun3}
				\begin{aligned}
					\tilde{F}_n^v(u)=&\frac{1}{\delta_n^p}\int_\Omega\int_\Omega R_{\delta_n}(|x-y|)|u(x)+v(x)-u(y)-v(y)|^pdxdy\\
					&+\int_{\partial\Omega}\left|\frac{1}{\delta_n}\int_\Omega K_{\delta_n}(|x-y|)u(y)dy\right|^pdx.
				\end{aligned}
			\end{equation}
		\end{lemma}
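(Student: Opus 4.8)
The plan rests on the observation that $F_n^v$ and $\tilde F_n^v$ carry \emph{exactly the same} bulk term, namely $I_n(w):=\frac1{\delta_n^p}\int_\Omega\int_\Omega R_{\delta_n}(|x-y|)|w(x)-w(y)|^p\,dx\,dy$ evaluated at $w=u+v$, so everything reduces to controlling the difference of the two boundary penalties. Writing, for $x\in\partial\Omega$, the average $\Phi_n(w)(x):=\frac1{\delta_n}\int_\Omega K_{\delta_n}(|x-y|)w(y)\,dy$ (linear in $w$) and the $u$-independent function $g_n(x):=\frac1{\delta_n}\int_\Omega K_{\delta_n}(|x-y|)(a(x)-v(y))\,dy$, one has $F_n^v(u)=I_n(u+v)+\|g_n-\Phi_n(u)\|_{L^p(\partial\Omega)}^p$ and $\tilde F_n^v(u)=I_n(u+v)+\|\Phi_n(u)\|_{L^p(\partial\Omega)}^p$, so that $\bigl|\,\|g_n-\Phi_n(u)\|_{L^p(\partial\Omega)}-\|\Phi_n(u)\|_{L^p(\partial\Omega)}\,\bigr|\le\|g_n\|_{L^p(\partial\Omega)}$. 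The first thing I would record is a uniform bound $M:=\sup_n\|g_n\|_{L^p(\partial\Omega)}<\infty$: Jensen's inequality together with $|x-y|\le r_K\delta_n$ on $\operatorname{supp}K_{\delta_n}$ gives $\|g_n\|_{L^p(\partial\Omega)}^p\le C\,\delta_n^{-p-d}\int_{\partial\Omega}\int_{\Omega\cap B_{r_K\delta_n}(x)}|a(x)-v(y)|^p\,dy\,dx$, and the right-hand side is controlled by $C\|v\|_{W^{1,p}(\Omega)}^p$ uniformly in $\delta_n$ — a nonlocal trace inequality, proved first for $v\in C^\infty(\overline\Omega)$ (where $|a(x)-v(y)|\le\|\nabla v\|_{L^\infty}|x-y|$ makes it immediate) and then extended by density, comparing rescaled kernels via \cref{lem:kernel_diff_scales}.

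For the $\liminf$ inequality, take $u_n\to u$ in $L^p(\Omega)$ with $\liminf_n F_n^v(u_n)<\infty$ and pass to a subsequence along which $F_n^v(u_n)\le C$. Then $I_n(u_n+v)\le C$ and $\|g_n-\Phi_n(u_n)\|_{L^p(\partial\Omega)}\le C^{1/p}$, hence $\|\Phi_n(u_n)\|_{L^p(\partial\Omega)}\le C^{1/p}+M$, i.e. $\tilde F_n^v(u_n)\le C+(C^{1/p}+M)^p$ along the subsequence. The assumed $\liminf$ inequality for $\tilde F_n^v$ then forces $\sigma_RF^v(u)\le\liminf_n\tilde F_n^v(u_n)<\infty$, so $u\in W^{1,p}(\Omega)$ and $Tu\equiv0$; since $u_n+v\to u+v$ in $L^p$, \cref{lem:gamma-con} gives $\liminf_n F_n^v(u_n)\ge\liminf_n I_n(u_n+v)\ge\sigma_R\int_\Omega|\nabla(u+v)|^p\,dx=\sigma_RF^v(u)$.

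For the $\limsup$ inequality, fix $u$ with $\sigma_RF^v(u)<\infty$ (so $u\in W^{1,p}$, $Tu\equiv0$). The assumed $\Gamma$-convergence supplies $\hat u_n\to u$ in $L^p$ with $\tilde F_n^v(\hat u_n)\to\sigma_RF^v(u)$; since $\liminf_n I_n(\hat u_n+v)\ge\sigma_RF^v(u)$ by \cref{lem:gamma-con}, necessarily $I_n(\hat u_n+v)\to\sigma_RF^v(u)$ and $\|\Phi_n(\hat u_n)\|_{L^p(\partial\Omega)}\to0$. It then remains to build a boundary corrector $\psi_n$, supported in the collar $\{x:\operatorname{dist}(x,\partial\Omega)\le c_0\delta_n\}$ and of amplitude $O(\delta_n)$, such that $\Phi_n(\psi_n)-g_n\to0$ in $L^p(\partial\Omega)$; the amplitude–width scaling then yields $\|\psi_n\|_{L^p(\Omega)}\to0$ and, using \cref{lem:kernel_diff_scales} together with the bound $M$, $I_n(\psi_n)\to0$. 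Setting $u_n:=\hat u_n+\psi_n$ we get $u_n\to u$ in $L^p$, $I_n(u_n+v)\to\sigma_RF^v(u)$ (Minkowski's inequality for the bulk energy applied to $\hat u_n+v$ and $\psi_n$, plus \cref{lem:gamma-con} for the matching lower bound), and $\|g_n-\Phi_n(u_n)\|_{L^p(\partial\Omega)}\le\|\Phi_n(\hat u_n)\|_{L^p(\partial\Omega)}+\|g_n-\Phi_n(\psi_n)\|_{L^p(\partial\Omega)}\to0$, whence $F_n^v(u_n)\to\sigma_RF^v(u)$. Combined with the translation equivalence ``$F_n\stackrel{\varGamma}{\longrightarrow}\sigma_RF$ iff $F_n^v\stackrel{\varGamma}{\longrightarrow}\sigma_RF^v$'' recalled before the lemma, this proves \cref{thm:gamma-con}.

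The step I expect to be the crux is the construction of the corrector $\psi_n$: it must be essentially flat in the normal direction at the scale $\delta_n$ and tuned so that the nonlocal average $\Phi_n$ reproduces $g_n$ on $\partial\Omega$ up to an $L^p(\partial\Omega)$-null error, while its own bulk nonlocal energy $I_n(\psi_n)$ stays negligible — which forces a careful parametrization of a thin collar of the Lipschitz boundary and precise bookkeeping of how $\Phi_n$ sees profiles concentrated there. The uniform trace bound $M$ on $g_n$ is exactly what keeps the corrector's amplitude, hence its energy, under control, and it is also the ingredient that makes the $\liminf$ transfer above work; accordingly I would treat the nonlocal trace inequality for $\|g_n\|_{L^p(\partial\Omega)}$ and the corrector construction as the two technical hearts of the proof.
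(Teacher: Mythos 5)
Your framing is the right one: the bulk terms of $F_n^v$ and $\tilde F_n^v$ coincide, so everything reduces to comparing the two boundary penalties, i.e. $\lVert g_n-\Phi_n(u)\rVert_{L^p(\partial\Omega)}^p$ versus $\lVert\Phi_n(u)\rVert_{L^p(\partial\Omega)}^p$ where $g_n(x)=\frac1{\delta_n}\int_\Omega K_{\delta_n}(|x-y|)(a(x)-v(y))\,dy$. But here your argument diverges from what is actually needed, and the gap is real. You only establish $M:=\sup_n\lVert g_n\rVert_{L^p(\partial\Omega)}<\infty$; the paper's proof establishes the much stronger $\lVert g_n\rVert_{L^p(\partial\Omega)}\to 0$. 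The distinction matters: with only a uniform bound $M$, the reverse triangle inequality gives $\bigl|\lVert g_n-\Phi_n(u)\rVert_{L^p}-\lVert\Phi_n(u)\rVert_{L^p}\bigr|\le M$, which is a gap that does not shrink; thus $|\,\lVert g_n-\Phi_n(u)\rVert_{L^p}^p-\lVert\Phi_n(u)\rVert_{L^p}^p\,|$ does not vanish and the limsup inequality does not transfer. You are forced into the boundary-corrector construction, which you correctly identify as the crux but never carry out. That construction is nontrivial: you need a $\psi_n$ supported in an $O(\delta_n)$ collar of a merely Lipschitz boundary whose nonlocal average $\Phi_n(\psi_n)$ reproduces a prescribed, $\delta_n$-scale oscillating profile $g_n$ on $\partial\Omega$ in $L^p$, while keeping $I_n(\psi_n)\to0$; the amplitude/width bookkeeping is delicate and nothing in your sketch actually produces such $\psi_n$. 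As written, the limsup half of the lemma is unproved.

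The reason $g_n\to 0$ is available (and is the key fact your estimate discards) is that $a$ is the \emph{trace} of $v$ on $\partial\Omega$, so $a(x)-v(y)$ has a built-in cancellation as $y\to x\in\partial\Omega$. Your Jensen/nonlocal-trace bound treats $a(x)$ and $v(y)$ as independent data and therefore only sees $\lVert v\rVert_{W^{1,p}}$, missing the cancellation. The paper exploits it by extending $v$ to the convex hull, taking a smooth approximation $v_n$ with $\lVert v_n-\bar v\rVert_{W^{1,p}}=o(\delta_n^{1+d/p})$, and writing $a(x)-v(y)=(a(x)-v_n(x))+(v_n(x)-v_n(y))+(v_n(y)-v(y))$; the first two pieces are small by the trace theorem and by the vanishing measure of the $\delta_n$-collar, the third by the approximation rate. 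Once $\lVert g_n\rVert_{L^p(\partial\Omega)}\to0$ is in hand, the same recovery sequence for $\tilde F_n^v$ works verbatim for $F_n^v$, and no corrector is needed. Your liminf transfer, by contrast, does go through with only $M<\infty$ (bounded $F_n^v(u_n)$ implies bounded $\tilde F_n^v(u_n)$, so $Tu=0$, and then \cref{lem:gamma-con} applied to the common bulk term finishes), so that half is sound; it is the limsup direction where your route, as presented, has a missing construction that the paper's stronger estimate on $g_n$ makes unnecessary.
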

		\begin{proof}
			According to the definitions, we only need to demonstrate that
			\[\left\lVert\frac{1}{\delta_n}\int_\Omega K_{\delta_n}(|x-y|)(v(y)-a(x))dy\right\rVert_{L^p(\partial\Omega)}\rightarrow 0\;\text{ as }\; n\rightarrow \infty.\] 
            Consider the extension $\bar{v}$ of $v$ on the convex hull $\bar{\Omega}$ of $\Omega$ and a sequence of $C^1$ approximation $\{v_n\}$ of $\bar{v}$, such that
            $\lVert v_n-\bar{v}\rVert_{W^{1,p}(\bar{\Omega})}=o(\delta_n^{1+\frac{d}{p}})$.
            By the trace theorem, this gives 
            $\lVert v_n-a\rVert_{L^p(\partial\Omega)}=o(\delta_n^{1+\frac{d}{p}})=o(\delta_n)$.
            Note that
            \begin{equation*}
               \begin{aligned}
                &\left\lVert\frac{1}{\delta_n}\int_\Omega K_{\delta_n}(|x-y|)(v(y)-a(x))dy\right\rVert_{L^p(\partial\Omega)}\\
                &\quad \leq\left\lVert\frac{1}{\delta_n}\int_\Omega K_{\delta_n}(|x-y|)(v_n(x)-a(x))dy\right\rVert_{L^p(\partial\Omega)}\\
                &\quad +\left\lVert\frac{1}{\delta_n}\int_\Omega K_{\delta_n}(|x-y|)(v_n(y)-v(y))dy\right\rVert_{L^p(\partial\Omega)}\\
                &\quad +\left\lVert\frac{1}{\delta_n}\int_\Omega K_{\delta_n}(|x-y|)(v_n(y)-v_n(x))dy\right\rVert_{L^p(\partial\Omega)}\\
                &\quad=:I_1+I_2+I_3.
            \end{aligned} 
            \end{equation*}
            For the first and the second term, they tend to zero since $\{v_n\}$ converges to $v$ at a sufficiently rapid rate.
            \begin{equation*}
             \begin{aligned}
                I_1&=\left\lVert\frac{1}{\delta_n}\int_\Omega K_{\delta_n}(|x-y|)dy(v_n(x)-a(x))\right\rVert_{L^p(\partial\Omega)}\\
                &=O(\frac{1}{\delta_n}\lVert v_n(x)-a(x)\rVert_{L^p(\partial\Omega)})\\
                &=o(1).
            \end{aligned}   
            \end{equation*} and
            \begin{equation*}
            \begin{aligned}
                I_2^p&=\int_{\partial\Omega}\left|\frac{1}{\delta_n}\int_\Omega K_{\delta_n}(|x-y|)(v_n(y)-v(y))dy\right|^pdx\\
                &=O\left(\int_{\partial\Omega}\int_\Omega \frac{1}{\delta_n^p}K_{\delta_n}(|x-y|)|v_n(y)-v(y)|^pdydx\right)\\
                &=O\left(\frac{1}{\delta_n^{p+d}}\lVert v_n-v\rVert_{L^p(\Omega)}^p\right)\\
                &=o(1).
            \end{aligned}
            \end{equation*}  
            For the last term, using the Taylor expansion,
    we have
        \begin{equation*}
            \begin{aligned}
                I_3^p
                &=O\left(\int_{\partial\Omega}\int_\Omega K_{\delta_n}(|x-y|)\frac{1}{\delta_n^p}|v_n(x)-v_n(y)|^pdydx\right)\\
                &= O\left(\int_{\partial\Omega}\int_{\bar{\Omega}} K_{\delta_n}(|x-y|)\frac{1}{\delta_n^p}|v_n(x)-v_n(y)|^pdydx\right)\\
&=O\left(\int_{\partial\Omega}\int_{\bar{\Omega}}\int_0^1 K_{\delta_n}(|x-y|)\frac{1}{\delta_n^p}|(y-x)\cdot\nabla v_n(x+t(y-x))|^pdtdydx\right)\\
&=O\left(\int_{\partial\Omega}\int_{|z|\leq r_K, x+\delta_n z\in{\bar{\Omega}}}\int_0^1 K(|z|^2)|z\cdot\nabla v_n(x+t\delta_n z)|^pdtdzdx\right)\\
&=O\left(\int_{\partial\Omega}\int_0^1\int_{|z|\leq r_K, x+\delta_n z\in\bar{\Omega}} |\nabla v_n(x+t\delta_n z)|^pdzdtdx\right)\\
        &=O(\lVert\nabla v_n\rVert_{L^p(\tilde{\Omega}_{\delta_n})}),
            \end{aligned}
        \end{equation*}
            where $\tilde{\Omega}_{\delta_n}\coloneqq\{x\in{\bar{\Omega}}\big|dist(x,\partial\Omega)\leq\delta_n r_K\}$ and the last equation holds since 
            \[\int_{|z|\leq r_K, x+\delta_n z\in{\bar{\Omega}}} |\nabla v_n(x+t\delta_n z)|^pdz\leq \lVert\nabla v_n\rVert_{L^p(\tilde{\Omega}_{\delta_n})}\]
            for all fixed $x\in\partial\Omega,\ t\in(0,1]$.
            Note that the measure of $\tilde{\Omega}_{\delta_n}$ tends to zero.  Hence,
            \[\lVert\nabla v_n\rVert_{L^p(\tilde{\Omega}_{\delta_n})}\leq \lVert\nabla \bar{v}\rVert_{L^p(\tilde{\Omega}_{\delta_n})}+\lVert \bar{v}-v_n\rVert_{W^{1,p}({\bar{\Omega}})}=o(1)\]
		\end{proof}
To complete the proof of \cref{thm:gamma-con}, we show the $\varGamma$-convergence 
of  $\tilde{F}_n^v$ to $F^v$. The proof 
is divided into two parts according to \cref{def:gamma}. 
		\begin{lemma}[the liminf inequality]
			\label{lem:liminf}
			Suppose that $\Omega$ is a Lipschitz bounded domain in $\mathbb{R}^d$. $K,R$ are two kernel functions satisfying (K1)-(K3). $1<p<\infty$. $\delta_n\rightarrow 0$ as $n\rightarrow\infty$. Then for any $u_n\rightarrow u$ in $L^p(\Omega)$,
			\[\liminf_{n\rightarrow\infty}\tilde{F}_n^v(u_n)\geq \sigma_RF^v(u),\]
			where $\tilde{F}_n^v,F^v$ are defined as \eqref{eq:dis-fun3},\eqref{eq:con-fun2}.
		\end{lemma}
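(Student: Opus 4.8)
The plan is to split $\tilde F_n^v$ into its interior nonlocal Dirichlet energy and its boundary penalty, handle the interior part with the already-available liminf estimate \cref{lem:gamma-con}, and use the boundary penalty to force the trace of the limit to vanish. If $\liminf_{n\to\infty}\tilde F_n^v(u_n)=\infty$ there is nothing to prove, so I assume this liminf is finite and pass to a subsequence (not relabelled) realising it, so that $\sup_n\tilde F_n^v(u_n)<\infty$. Writing $w_n\coloneqq u_n+v$ and $w\coloneqq u+v$, we have $w_n\to w$ in $L^p(\Omega)$, and the interior term of $\tilde F_n^v(u_n)$ is exactly $\frac{1}{\delta_n^p}\int_\Omega\int_\Omega R_{\delta_n}(|x-y|)|w_n(x)-w_n(y)|^p\,dx\,dy$, which is bounded above by $\tilde F_n^v(u_n)$. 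Hence \cref{lem:gamma-con} applied to $\{w_n\}$ forces $w\in W^{1,p}(\Omega)$ (otherwise the liminf would be infinite) together with
\[
\liminf_{n\to\infty}\frac{1}{\delta_n^p}\int_\Omega\int_\Omega R_{\delta_n}(|x-y|)|w_n(x)-w_n(y)|^p\,dx\,dy\ \ge\ \sigma_R\int_\Omega|\nabla u(x)+\nabla v(x)|^p\,dx ,
\]
and in particular $u=w-v\in W^{1,p}(\Omega)$.

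It remains to show $Tu\equiv 0$ on $\partial\Omega$: once this is known, $F^v(u)=\int_\Omega|\nabla u+\nabla v|^p\,dx$, and since the boundary penalty is nonnegative the displayed inequality already yields $\liminf_n\tilde F_n^v(u_n)\ge\sigma_RF^v(u)$. To reach $Tu=0$ I would work with the mollification \eqref{eq:tilde-u} for the specific choice $\hat K=K$, writing $\tilde u_n$ for the regularisation of $u_n$. Because $v$ is a fixed $W^{1,p}(\Omega)$ function, its nonlocal energy $\frac{1}{\delta_n^p}\int_\Omega\int_\Omega R_{\delta_n}(|x-y|)|v(x)-v(y)|^p\,dx\,dy$ stays bounded, so by $|u_n(x)-u_n(y)|^p\le 2^{p-1}\big(|w_n(x)-w_n(y)|^p+|v(x)-v(y)|^p\big)$ the quantity $\frac{1}{\delta_n^p}\int_\Omega\int_\Omega R_{\delta_n}(|x-y|)|u_n(x)-u_n(y)|^p\,dx\,dy$ is bounded; hence $\|\nabla\tilde u_n\|_{L^p(\Omega)}$ is uniformly bounded by \cref{lem:L2-estimation}. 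Together with the standard fact that $\tilde u_n\to u$ in $L^p(\Omega)$ (uniform boundedness of the normalised mollification on $L^p$, plus convergence of the $\delta_n$-averages of $u$), this shows $\{\tilde u_n\}$ is bounded in $W^{1,p}(\Omega)$ and converges to $u$ in $L^p(\Omega)$.

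The second ingredient is the algebraic identity, valid for $x\in\partial\Omega$ when $\hat K=K$,
\[
\frac{1}{\delta_n}\int_\Omega K_{\delta_n}(|x-y|)u_n(y)\,dy\ =\ \frac{\omega_{\delta_n}(x)}{\delta_n}\,\tilde u_n(x),
\]
together with the uniform lower bound $\omega_{\delta_n}(x)=\int_\Omega K_{\delta_n}(|x-y|)\,dy\ge c_0>0$ for all $x\in\partial\Omega$ and all small $\delta_n$, which follows from the uniform interior cone condition of the Lipschitz domain $\Omega$ (and from $K$ being positive near the origin, as $K\not\equiv 0$). Pointwise on $\partial\Omega$ the integrand of the boundary penalty is therefore at least $\frac{c_0^p}{\delta_n^p}|T\tilde u_n(x)|^p$, so
\[
\|T\tilde u_n\|_{L^p(\partial\Omega)}^p\ \le\ \frac{\delta_n^p}{c_0^p}\int_{\partial\Omega}\Big|\frac{1}{\delta_n}\int_\Omega K_{\delta_n}(|x-y|)u_n(y)\,dy\Big|^p dx\ \le\ \frac{\delta_n^p}{c_0^p}\,\tilde F_n^v(u_n)\ \longrightarrow\ 0 .
\]
Since $\{\tilde u_n\}$ is bounded in $W^{1,p}(\Omega)$, converges to $u$ in $L^p(\Omega)$, and satisfies $\|T\tilde u_n\|_{L^p(\partial\Omega)}\to0$, \cref{lem:trace} gives $Tu=0$, which completes the proof.

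I expect the delicate point to be exactly this boundary step. Because $u_n$ converges only in $L^p(\Omega)$ it has no well-defined trace, so the estimate must be transferred to the regularised functions $\tilde u_n$; what makes this possible is the exact rewriting of the $K$-penalty through $T\tilde u_n$ (the reason for taking $\hat K=K$), the uniform positivity of $\omega_{\delta_n}$ on $\partial\Omega$ coming from the Lipschitz cone condition, and the uniform $W^{1,p}$ bound on $\tilde u_n$ from \cref{lem:L2-estimation}, whose use in turn obliges one first to absorb the bounded nonlocal energy of the fixed function $v$. The interior estimate, the mollification convergence, and the trace limit are all standard or already available in the preceding lemmas.
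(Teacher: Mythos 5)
Your argument follows the paper's own proof almost step by step: reduce to the finite-liminf case, apply \cref{lem:gamma-con} to $u_n+v$ for the interior part and membership in $W^{1,p}$, control the boundary term via the mollification $\tilde u_n$ (with $\hat K=K$) using the uniform positivity of $\omega_{\delta_n}$ on $\partial\Omega$ so that $\|T\tilde u_n\|_{L^p(\partial\Omega)}\to 0$, obtain the uniform $W^{1,p}$ bound from \cref{lem:L2-estimation} after absorbing the bounded nonlocal energy of $v$, use \cref{lem:fou} to get $\tilde u_n\to u$ in $L^p$, and conclude $Tu=0$ via \cref{lem:trace}. This is the same decomposition and the same chain of lemmas as in the paper, and the proof is correct.
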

		\begin{proof}
			\gwy{Without loss of generality, we can suppose that $\liminf_{n\rightarrow\infty}\tilde{F}_n^v(u_n)<\infty$.} From \cref{lem:gamma-con}, we have  
			\[\liminf_{n\rightarrow\infty}\frac{1}{\delta_n^p}\int_\Omega\int_\Omega R_{\delta_n}(|x-y|)|u_n(x)+v(x)-u_n(y)-v(y)|^pdxdy\geq \sigma_RE(u+v).\]
			which also yields that $u\in W^{1,p}$.
			We then claim that $Tu\equiv 0$ on $\partial\Omega$. 
   Once the claim is verified, we get
		\begin{equation*}
		\begin{aligned}
					&\liminf_{n\rightarrow\infty}\tilde{F}_n^v(u_n)\\
					\geq&\liminf_{n\rightarrow\infty}\frac{1}{\delta_n^p}\int_\Omega\int_\Omega R_{\delta_n}(|x-y|)|u_n(x)+v(x)-u_n(y)-v(y)|^pdxdy\\
					\geq &\sigma_RE(u+v)\\
					= &\sigma_RF^v(u)
				\end{aligned}
			\end{equation*}
			\gwy{and the proof is complete.} To 
  show the claim, note that we also have 
 \[  \liminf_{n\rightarrow\infty}\int_{\partial\Omega}\left|\frac{1}{\delta_n}\int_\Omega K_{\delta_n}(|x-y|)u_n(y)dy\right|^pdx<\infty.\]
			\gwy{Define
   \begin{equation*}
			 \tilde{u}_n (x)\coloneqq \frac{1}{\omega_{\delta_n}(x)}\int_\Omega K_{\delta_n}(|x-y|)u_n(y)dy, \; \text{ and }\;\omega_{\delta_n}(x)\coloneqq \int_\Omega K_{\delta_n}(|x-y|)dy.
    \end{equation*}
The above property implies
   the fact that there exists a subsequence (still denoted by $\{\tilde{u}_n\}$ for simplification of notations) of $\{\tilde{u}_n\}$ satisfying}
			\[\lim_{n\rightarrow\infty} \lVert T\tilde{u}_n\rVert_{L^p(\partial\Omega)}=0\]
			owing to the positive lower bound of $\omega_{\delta_n}(x)$. For the $W^{1,p}$ estimation, firstly,
			\begin{equation*}
				\begin{aligned}
					\lVert\tilde{u}_n\rVert_{L^p(\Omega)}&\leq C_1\left(\int_\Omega\int_\Omega K_{\delta_n}(|x-y|)u_n^p(y)dydx\right)^\frac{1}{p}
					\leq C_2\lVert u_n\rVert_{L^p(\Omega)},
				\end{aligned}
			\end{equation*}
			which yields that the $L^p(\Omega)$ norm of $\{\tilde{u}_n\}$ is uniformly bounded. Moreover, 
			\begin{equation*}
				\begin{aligned}
					&\limsup_{n\rightarrow\infty}\left\lVert\frac{1}{\delta_n}R_{\delta_n}^\frac{1}{p}(|x-y|)(v(x)-v(y))\right\rVert_{L^p(\Omega\times\Omega)}\\
				&\quad	=\limsup_{n\rightarrow\infty}\left\lVert\frac{1}{\delta_n}R_{\delta_n}^\frac{1}{p}(|x-y|)\nabla v(x)\cdot(x-y)\right\rVert_{L^p(\Omega\times\Omega)}\\
				&\quad	=\limsup_{n\rightarrow\infty}\left(\int_{\Omega}\int_{\Omega}R_{\delta_n}(|x-y|)|\nabla v(x)\cdot\frac{x-y}{\delta_n}|^pdxdy\right)^\frac{1}{p}		<\infty.
				\end{aligned}
			\end{equation*}

So the assumption $\liminf_{n\rightarrow\infty}\tilde{F}_n^v(u_n)<\infty$ also yields that
    \begin{equation*}
        \begin{aligned}
      &	\liminf_{n\rightarrow\infty}\frac{1}{\delta_n}\left(\int_\Omega\int_\Omega R_{\delta_n}(|x-y|)|u_n(x)-u_n(y)|^pdxdy\right)^\frac{1}{p}
 \\
 & \leq\left\lVert\frac{1}{\delta_n}R_{\delta_n}^\frac{1}{p}(|x-y|)(u(x)-u(y)+v(x)-v(y))\right\rVert_{L^p(\Omega\times\Omega)} \\
 & \quad + 
 \left\lVert\frac{1}{\delta_n}R_{\delta_n}^\frac{1}{p}(|x-y|)(v(x)-v(y))\right\rVert_{L^p(\Omega\times\Omega)}
 <\infty. 
   \end{aligned}
    \end{equation*}
   
			Therefore, with \cref{lem:L2-estimation}, we get the conclusion that the $W^{1,p}$ norm of $\{\tilde{u}_n\}$ is uniformly bounded
			\[\sup_n \lVert\tilde{u}_n\rVert_{W^{1,p}(\Omega)}<\infty.\]
			Finally, the transform form $u_n$ to $\tilde{u}_n$ preserves the $L^p$ convergence due to \cref{lem:fou}:
			\begin{equation*}
				\begin{aligned}
					\lVert\tilde{u}_n-u\rVert_{L^p(\Omega)}&=\left\lVert\frac{1}{\omega_{\delta_n}(x)}\int_\Omega K_{\delta_n}(|x-y|)(u_n(y)-u(x))dy\right\rVert_{L^p(\Omega)}\\
					&\leq C_3\left\lVert\int_\Omega K_{\delta_n}(|x-y|)(u_n(y)-u(x))dy\right\rVert_{L^p(\Omega)}\\
					&\leq C_3\bigg(\left\lVert\int_\Omega K_{\delta_n}(|x-y|)(u_n(y)-u_n(x))dy\right\rVert_{L^p(\Omega)}\\
					&+\left\lVert\int_\Omega K_{\delta_n}(|x-y|)(u_n(x)-u(x))dy\right\rVert_{L^p(\Omega)}\bigg)\\
					&\leq C_3\left\lVert\int_\Omega K_{\delta_n}(|x-y|)(u_n(y)-u_n(x))dy\right\rVert_{L^p(\Omega)}\\
					&+C_4\lVert u_n-u\rVert_{L^p(\Omega)}\longrightarrow 0, \quad (\text{as } \, n\rightarrow\infty).
				\end{aligned}
			\end{equation*}
			Hence, with \cref{lem:trace}, $\lVert Tu\rVert_{L^p(\partial\Omega)}=0$.
		\end{proof}
  
	For the limsup inequality, the following lemma is useful. It provides the connection between the nonlocal  
  Dirichlet energy and the local version defined on $W^{1,p}(\Omega)$, The proof can be done using the Taylor expansion, see proofs, in for example, \cite{BBM01}.
  
        \begin{lemma}
            \label{lem:taylor}
            Suppose that $\Omega$ is a Lipschitz bounded domain in $\mathbb{R}^d$. $R$ is a  kernel satisfying (K1)-(K3). Then, for all $u\in 
            W^{1,p}(\Omega)
            $, $\delta>0$,
            \begin{equation*}
                \begin{aligned}
                \frac{1}{\delta^p}\int_\Omega\int_\Omega R_{\delta}(|x-y|)|u(x)-u(y)|^pdxdy\leq\sigma_R\lVert\nabla u\rVert_{L^p(\Omega)}^p.
                \end{aligned}
            \end{equation*}
            
        \end{lemma}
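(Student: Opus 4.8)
The plan is to reduce to smooth integrands and then run a ``one line at a time'' estimate that uses only the fundamental theorem of calculus and Jensen's inequality; since no Taylor remainder is discarded, this produces the sharp constant $\sigma_R$ for \emph{every} fixed $\delta>0$ rather than only asymptotically as $\delta\to0$. The single extra ingredient, needed only when $\Omega$ is not convex, is the treatment of pairs $(x,y)$ whose connecting segment leaves $\Omega$, and that is where I expect the real difficulty to lie.

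First I would reduce to $u\in C^1(\overline\Omega)$. For any $v\in L^p(\Omega)$, using $|a-b|^p\le 2^{p-1}(|a|^p+|b|^p)$ and $\int_\Omega R_\delta(|x-y|)\,dy\le C_R=\int_{\mathbb R^d}R(|z|^2)\,dz$, one gets
\begin{equation*}
\int_\Omega\int_\Omega R_\delta(|x-y|)\,|v(x)-v(y)|^p\,dx\,dy\le 2^p C_R\,\|v\|_{L^p(\Omega)}^p ,
\end{equation*}
so the left-hand side of the lemma depends continuously on $u\in L^p(\Omega)$, while the right-hand side is continuous on $W^{1,p}(\Omega)$. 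Since $\Omega$ is Lipschitz, $C^\infty(\overline\Omega)$ is dense in $W^{1,p}(\Omega)$, so it suffices to treat $u\in C^1(\overline\Omega)$.

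Next, for such $u$ I would pass to the variables $y-x=r\omega$ with $r>0$, $\omega\in S^{d-1}$, and foliate $\Omega$ by lines parallel to $\omega$; along each line the slice of $\Omega$ is a disjoint union of open intervals. Whenever $x$ and $x+r\omega$ lie in the \emph{same} interval the segment $[x,x+r\omega]$ stays in $\Omega$, so $u(x+r\omega)-u(x)=r\int_0^1\nabla u(x+tr\omega)\cdot\omega\,dt$, and Jensen's inequality gives $|u(x+r\omega)-u(x)|^p\le r^p\int_0^1|\nabla u(x+tr\omega)\cdot\omega|^p\,dt$. Integrating over the line, summing over the intervals, applying Fubini and the translation $x\mapsto x+tr\omega$ (which remains inside $\Omega$ precisely by the same-interval property), and then integrating against $r^{d-1}R_\delta(r)\,dr\,d\omega$ yields
\begin{equation*}
\int_{\{(x,y)\in\Omega\times\Omega:\,[x,y]\subset\Omega\}}R_\delta(|x-y|)\,|u(x)-u(y)|^p\,dx\,dy\le\int_\Omega\Big(\int_{\mathbb R^d}R_\delta(|h|)\,|\nabla u(x)\cdot h|^p\,dh\Big)\,dx .
\end{equation*}
The substitution $h=\delta z$ and the rotational invariance of $z\mapsto R(|z|^2)$ give $\int_{\mathbb R^d}R_\delta(|h|)|v\cdot h|^p\,dh=\delta^p|v|^p\int_{\mathbb R^d}R(|z|^2)|z\cdot e_1|^p\,dz=\delta^p\sigma_R|v|^p$, so the right-hand side above equals exactly $\delta^p\sigma_R\|\nabla u\|_{L^p(\Omega)}^p$. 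When $\Omega$ is convex the constraint $[x,y]\subset\Omega$ is automatic, and this finishes the proof for every $\delta>0$.

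The hard part will be the last step for a general Lipschitz domain: controlling the ``bad'' pairs in $\{(x,y)\in\Omega\times\Omega:[x,y]\not\subset\Omega\}$. My plan is to reduce to the convex case by extending $u$ to $\widehat u\in C^1_c(\mathbb R^d)$, using $\int_\Omega\int_\Omega\le\int_{\widehat\Omega}\int_{\widehat\Omega}$ for the convex hull $\widehat\Omega\supset\Omega$ and invoking the estimate just proved on $\widehat\Omega$; equivalently, one estimates the bad pairs directly, noting that each such pair satisfies $|x-y|\le r_R\delta$ with its segment crossing $\partial\Omega$, which by the cone property confines $x$ and $y$ to a boundary layer of width $O(\delta)$ where $|u(x)-u(y)|^p$ can be bounded through $\widehat u$ and a rescaled kernel (here \cref{lem:kernel_diff_scales} handles the rescaling). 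The genuine technical point is then to absorb this boundary-layer correction into $\sigma_R\|\nabla u\|_{L^p(\Omega)}^p$ while keeping the sharp constant uniformly in $\delta$, and this is the step on which I would concentrate the detailed work.
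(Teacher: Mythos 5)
Your core argument is exactly the ``Taylor expansion'' proof the paper has in mind (it gives no details beyond citing \cite{BBM01}): density of $C^1(\overline\Omega)$ in $W^{1,p}(\Omega)$ plus continuity of both sides, the fundamental theorem of calculus along the segment $[x,y]$, Jensen's inequality, Fubini and the translation $x\mapsto x+tr\omega$, and the change of variables $h=\delta z$ identifying the constant $\sigma_R$. All of these steps are correct, and for convex $\Omega$ (where $[x,y]\subset\Omega$ is automatic) your proof is complete with the sharp constant for every $\delta>0$.

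The gap you flag at the end is, however, genuine and cannot be closed along either of the routes you sketch. Passing to an extension $\widehat u$ on the convex hull $\widehat\Omega$ produces the bound $\sigma_R\lVert\nabla\widehat u\rVert_{L^p(\widehat\Omega)}^p$, which is in general strictly larger than $\sigma_R\lVert\nabla u\rVert_{L^p(\Omega)}^p$ and not controlled by it; and a boundary-layer estimate of the bad pairs via the cone property or quasi-convexity of $\Omega$ (geodesic distance $\le C_\Omega|x-y|$) inevitably produces a constant $C_\Omega^p\sigma_R$ reflecting the Lipschitz character of $\partial\Omega$. Indeed, the inequality with the exact constant $\sigma_R$ for \emph{every} fixed $\delta>0$ fails on some Lipschitz domains: take $\Omega$ a square with a thin rectangular notch of width $\varepsilon$ removed, $\delta\approx\varepsilon$, and $u$ varying by $O(1)$ across the notch with $\lVert\nabla u\rVert_{L^p}^p=O(1)$; the pairs facing each other across the notch contribute on the order of $\delta^{1-p}$ to the left-hand side. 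So the ``detailed work'' you defer cannot succeed as stated. The honest resolutions are to assume $\Omega$ convex, to accept a constant $C(\Omega)\sigma_R$, or to prove only the asymptotic form $\limsup_{\delta\to0}(\cdot)\le\sigma_R\lVert\nabla u\rVert_{L^p(\Omega)}^p$; note that in \cref{lem:limsup} the lemma is applied to functions $u_n^v$ and followed by a $\limsup$, and the recovery sequence there is built from smooth functions supported away from $\partial\Omega$, so a version of \cref{lem:taylor} restricted to such functions (for which the bad pairs can be localized near $\partial\Omega$ and shown to vanish) would suffice for the paper's purposes. You should state explicitly which of these weaker statements you are proving.
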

   
  
		\begin{lemma}[the limsup inequality]
			\label{lem:limsup}
			Suppose that $\Omega$ is a Lipschitz bounded domain in $\mathbb{R}^d$. $K,R$ are two kernel functions satisfying (K1)-(K3). $\delta_n\rightarrow 0$ as $n\rightarrow\infty$. Then for any $u$ in $L^p(\Omega)$, there exists a sequence $\{u_n\}$ converging to $u$ in $L^p(\Omega)$ and
			\[\limsup_{n\rightarrow\infty}\tilde{F}_n^v(u_n)\leq \sigma_RF^v(u),\]
			where $\tilde{F}_n^v,F^v$ are defined as \eqref{eq:dis-fun3},\eqref{eq:con-fun2}.
		\end{lemma}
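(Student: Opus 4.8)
The plan is to build an explicit recovery sequence for part (2) of \cref{def:gamma}. If $F^v(u)=\infty$ there is nothing to do: the constant sequence $u_n\equiv u$ converges to $u$ in $L^p(\Omega)$ and trivially satisfies $\limsup_{n\to\infty}\tilde F_n^v(u_n)\le\infty=\sigma_RF^v(u)$. So assume $F^v(u)<\infty$, which means $u\in W^{1,p}(\Omega)$ with $Tu\equiv 0$ on $\partial\Omega$; since $\Omega$ is Lipschitz, this is equivalent to $u\in W_0^{1,p}(\Omega)$, so there is a sequence $\{\phi_k\}\subset C_c^\infty(\Omega)$ with $\phi_k\to u$ in $W^{1,p}(\Omega)$. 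The key point is that a compactly supported smooth function automatically kills the boundary penalty once $\delta_n$ is small enough, while \cref{lem:taylor} controls the interior term uniformly in $\delta_n$.

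First I would fix $k$ and analyze $\tilde F_n^v(\phi_k)$. Since $\mathrm{supp}(K)\subset[0,r_K^2]$, the kernel $K_{\delta_n}(|x-y|)$ vanishes whenever $|x-y|>r_K\delta_n$; hence, as soon as $r_K\delta_n<\mathrm{dist}(\mathrm{supp}(\phi_k),\partial\Omega)$, we have $\int_\Omega K_{\delta_n}(|x-y|)\phi_k(y)\,dy=0$ for every $x\in\partial\Omega$, so the boundary term of $\tilde F_n^v(\phi_k)$ is exactly zero for all large $n$. For the interior term, $\phi_k+v\in W^{1,p}(\Omega)$, so \cref{lem:taylor} gives, for every $\delta_n>0$,
\[
\frac{1}{\delta_n^p}\int_\Omega\int_\Omega R_{\delta_n}(|x-y|)\,|(\phi_k+v)(x)-(\phi_k+v)(y)|^p\,dx\,dy\le\sigma_R\|\nabla(\phi_k+v)\|_{L^p(\Omega)}^p.
\]
Combining the two, $\limsup_{n\to\infty}\tilde F_n^v(\phi_k)\le\sigma_R\|\nabla(\phi_k+v)\|_{L^p(\Omega)}^p$ for each fixed $k$.

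Next I would let $k\to\infty$ and diagonalize. Since $\phi_k\to u$ in $W^{1,p}(\Omega)$, we have $\|\nabla(\phi_k+v)\|_{L^p(\Omega)}^p\to\|\nabla(u+v)\|_{L^p(\Omega)}^p=F^v(u)$ and $\|\phi_k-u\|_{L^p(\Omega)}\to 0$. For each $k$ pick $N_k$, strictly increasing in $k$, so that $\tilde F_n^v(\phi_k)\le\sigma_R\|\nabla(\phi_k+v)\|_{L^p(\Omega)}^p+\tfrac1k$ for all $n\ge N_k$, and set $u_n:=\phi_k$ for $N_k\le n<N_{k+1}$ (and $u_n:=\phi_1$ for $n<N_1$). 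Then the block index tends to infinity, so $u_n\to u$ in $L^p(\Omega)$, and $\limsup_{n\to\infty}\tilde F_n^v(u_n)\le\limsup_{k\to\infty}\bigl(\sigma_R\|\nabla(\phi_k+v)\|_{L^p(\Omega)}^p+\tfrac1k\bigr)=\sigma_RF^v(u)$, which is the claim.

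I do not expect a serious obstacle here: the only substantive inputs are the density of $C_c^\infty(\Omega)$ in the zero-trace subspace of $W^{1,p}(\Omega)$ for Lipschitz $\Omega$ (classical) and the pointwise-in-$\delta$ bound of \cref{lem:taylor}, which together let the smooth compactly supported approximants simultaneously annihilate the boundary penalty and keep the interior energy at most $\sigma_RF^v(u)$. The one place where care is needed is the diagonalization, ensuring the constructed $u_n$ both converges in $L^p(\Omega)$ and inherits the energy bound. Were one instead to perturb $u$ itself by a boundary cutoff $\chi_n$ of width $\sim\delta_n$ rather than approximating by $C_c^\infty$ functions, the delicate point would become controlling the term $u\,\nabla\chi_n$, which would require a Hardy-type inequality $\int_\Omega|u(x)|^p/\mathrm{dist}(x,\partial\Omega)^p\,dx\le C\|\nabla u\|_{L^p(\Omega)}^p$; the $C_c^\infty$ route avoids this entirely.
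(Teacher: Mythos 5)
Your proof is correct and rests on the same two pillars as the paper's: density of compactly supported smooth functions in $W_0^{1,p}(\Omega)$ to annihilate the boundary penalty, and \cref{lem:taylor} to bound the interior term by $\sigma_R\lVert\nabla(\cdot)\rVert_{L^p}^p$. The one place you genuinely diverge is in how the recovery sequence is assembled. The paper takes a single sequence $\{u_n\}$ smooth with $u_n\to u$ in $W^{1,p}$ and then imposes $u_n\equiv 0$ on $\Omega_{\delta_n}^c$ "by multiplying a smooth approximation of $\mathbf{1}_{\Omega_{\delta_n}}$"; as you correctly observe, a literal reading of that sentence would produce a term $u_n\nabla\chi_n$ of size $\sim 1/\delta_n$ whose $L^p$-control would require a Hardy-type estimate, and the paper offers no such argument. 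Your version avoids that entirely: fix a $C_c^\infty$ approximant $\phi_k$, note the boundary term is identically zero once $r_K\delta_n$ is smaller than $\mathrm{dist}(\mathrm{supp}\,\phi_k,\partial\Omega)$, then diagonalize in $(k,n)$. This is the rigorous implementation of what the paper sketches and is the cleaner way to write it; the two strategies buy the same conclusion, but yours makes the interchange of limits explicit and sidesteps the cutoff issue.
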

		\begin{proof}
			We can suppose that $F^v(u)<\infty$, which yields $u\in W^{1,p}$ and $Tu\equiv 0$. With the density of $C^\infty$ functions in $W^{1,p}$ space, we can choose $\{u_n\}$
to be a sequence of smooth functions and converges to $u$ with respect to the $W^{1,p}(\Omega)$ norm. We additionally require $u_n\equiv 0$ in $\Omega_{\delta_n}^c$, where $\Omega_{\delta_n}\coloneqq\{x\in\Omega\big| \rm{dist}(x,\Omega^c)\geq r_K \delta_n\}$. This condition can be attained by mutiplying  a smooth approximation of the indicator function $\mathbf{1}_{\Omega_{\delta_n}}$. 	

			For such $\{u_n\}$, it is obvious that
			\[\left(\int_{\partial\Omega}\left|\frac{1}{\delta_n}\int_\Omega K_{\delta_n}(|x-y|)u_n(y)dy\right|^pdx\right)^\frac{1}{p}=0.\]
			To simplify the notation, we denote $u_n+v,u+v$ by $u_n^v,u^v$. With \cref{lem:taylor},
		\begin{equation*}
		\begin{aligned}
				\frac{1}{\delta_n^p}\int_\Omega\int_\Omega R_{\delta_n}(|x-y|)|u_n^v(x)-u_n^v(y)|^pdxdy		\leq\sigma_R\lVert\nabla u_n^v(z)\rVert_{L^p(\Omega)}^p, \;  \forall n>o.
			\end{aligned}
		\end{equation*}
			As a result, with $u_n^v\rightarrow u^v$ in $W^{1,p}(\Omega)$,
			\begin{equation*}
				\begin{aligned}			\limsup_{n\rightarrow\infty}\tilde{F}_n^v(u_n)
	& =\limsup_{n\rightarrow\infty}\frac{1}{\delta_n^p}\int_\Omega\int_\Omega R_{\delta_n}(|x-y|)(u_n^v(x)-u_n^v(y)|^pdxdy\\
&\leq\sigma_R\limsup_{n\rightarrow\infty}\lVert\nabla u_n^v\rVert_{L^p(\Omega)}^p					=\sigma_R\lVert\nabla u^v\rVert_{L^p(\Omega)}^p	=\sigma_RF^v(u).			\end{aligned}
		\end{equation*}
		\end{proof}
  
	Finally, with	 \cref{lem:approx,lem:liminf,lem:limsup}, the proof  of \cref{thm:gamma-con} is complete. 
		\subsection{Nonlocal model for eigenvalue problem}
\label{subsec:proof2}
		Recall that $F_{e,n}^k$, $\tilde{F}_{e,n}^k$ and $F_e^k$ defined as \eqref{eq:eigen-dis-fun},\eqref{eq:normal-dis-fun} and \eqref{eq:eigen-cons-func} actually equal to $F_n$ or $F$ if $u$ satisfies the constraints. With the $\varGamma$-convergence of $F_n$ to $F$ proved in \cref{thm:gamma-con}, we only need to complete the remaining proof.
		\begin{proof}[Proof of \cref{thm:eigenvalue}] we apply the induction with respect to $k$.
  
			$Unnormalized\ case,\ liminf\ inequality$: 
   \gwy{Without loss of generality, we can assume that $\liminf_{n\rightarrow\infty}F_{e,n}^k(u_n)<\infty$.} Then there exists a subsequence $\{n_l\}$ such that $\lVert u_{n_l}\rVert_{L^2(\Omega)}=1$ and $\ u_{n_l}\perp V_{k-1}^n,\ \forall l.$ Note that $u_{n_l}\rightarrow u$ in $L^2(\Omega)$, so we have $\lVert u\rVert_{L^2(\Omega)}=1$. For $k=1$, $u\perp V_0$ is trivial since $V_0=\emptyset$. For $k>1$, suppose that $F_{e,n}^l$ $\varGamma-$converge to $F_e^l$ for all $l<k$. Then with the compactness result in \cref{sec:compact} and the property of $\varGamma-$convergence($\cref{lem:minimizer}$), we have $\mu_l^n\rightarrow\mu_l$ in $L^2$ norm for all $l<k$. Hence,
   \[\langle u,\mu_l\rangle=\lim_{n\rightarrow\infty}\langle u_n,\mu_l^n\rangle=0\]
   for all $l<k$, which means that $u\perp V_{k-1}$ and 		\begin{equation*}
		\begin{aligned}
\liminf_{n\rightarrow\infty}F_{e,n}^k(u_n)&=\liminf_{n\rightarrow\infty}F_n(u_n)
				\leq \sigma_RF(u)
			=\sigma_RF_e^k(u).	\end{aligned}
		\end{equation*}
  
		$Unnormalized\ case,\ limsup\ inequality$: Supposing that $F(u)<\infty$, from\\ \cref{lem:limsup} with $v\equiv 0$, we get a sequence $\{u_n\}$ converging to $u$ in $L^2$ norm. Meanwhile, $u_n$ is smooth and supported in $\Omega_{\delta_n}$. For $k=1$, replacing $u_n$ with $\frac{u_n}{\lVert u_n\rVert_{L^2}}$ and using the same progress in \cref{lem:limsup}, we can prove the limsup inequality in unnormalized case. For $k>1$, we consider a smooth approximation $\nu_m^n$ for a basis $\mu_m^n$ of $V_{k-1}^n$. $\nu_m^n$ is also supported in $\Omega_{\delta_n}$ and $\lVert\mu_m^n-\nu_m^n\rVert_{L^2}\rightarrow 0$. Let
			\[\tilde{u}_n=u_n-\sum_{m=1}^{k-1}\alpha_m^n\nu_m^n,\;
		\text{ where }\;
\alpha^n\coloneqq\left(\begin{array}{c}
			\alpha_1^n\\
				\vdots\\
				\alpha_{k-1}^n
			\end{array}\right)=G_n^{-1}b_n,\]
			\[G_n=\left(\begin{array}{ccc}
				\langle\mu_1^n,\nu_1^n\rangle&\dots&\langle\mu_1^n,\nu_{k-1}^n\rangle\\			\vdots&\ddots&\vdots\\			\langle\mu_{k-1}^n,\nu_1^n\rangle&\dots&\langle\mu_{k-1}^n,\nu_{k-1}^n\rangle\\			\end{array}\right),\ b_n=\left(\begin{array}{c}		\langle\mu_1^n,u_n\rangle\\
				\vdots\\
				\langle\mu_{k-1}^n,u_n\rangle		\end{array}\right)\]
			and $\displaystyle \langle u,v\rangle=\int_\Omega uvdx$ is the inner product of $L^2(\Omega)$. $G_n$ is invertible
   for large enough $n$ because $G_n\rightarrow I$ as $n\rightarrow\infty$. It can be verified that with such modification, $\tilde{u}_n$ is still smooth, supported in $\Omega_{\delta_n}$ and becomes perpendicular to $V_{k-1}^n$. Furthermore, with the induction assumption, $b_n\rightarrow\mathbf{0}$ as $n\rightarrow\infty$ so the $L^2$ convergence is also retained. Finally, by replacing $\tilde{u}_n$ with $\frac{\tilde{u}_n}{\lVert\tilde{u}_n\rVert_{L^2}}$ and the conclusion can be proved in the same way as in \cref{lem:limsup}.
   
   $Normalized\ case$: The proof in normalized case is similar to the one in unnormalized case with the fact that
   \[\lim_{n\rightarrow\infty}|\langle u,v_n\rangle_n-\langle u,v\rangle|=0\]
   for all $u\in L^2(\Omega)$ and $v_n\rightarrow v$ in $L^2$ norm. Actually, by extending the domain of $v$, $v_n$ and $u$ to $\mathbb{R}^d$ with zero-value outside $\Omega$. Then, with \cref{lem:fou},		\begin{equation*}
		\begin{aligned}
			&|\langle u,v_n\rangle_n-\langle u,v\rangle|\\	=&\left|\int_{\Omega}\int_{\Omega}W_{\delta_n}(|x-y|)u(x)v_n(y)dxdy-\int_{\Omega}u(x)v(x)dx\right|\\
=&\left|\int_{\mathbb{R}^d}\int_{\mathbb{R}^d}W_{\delta_n}(|x-y|)u(x)v_n(y)dxdy-\int_{\mathbb{R}^d}u(x)v(x)dx\right|\\
\leq&\int_{\mathbb{R}^d}\int_{\mathbb{R}^d}W_{\delta_n}(|x-y|)|v(x)-v_n(y)|dy|u(x)|dx\\	\leq&\left\lVert\int_{\Omega}W_{\delta_n}(|x-y|)|v(x)-v_n(y)|dy\right\rVert_{L^2(\mathbb{R}^d)}\lVert u\rVert_{L^2(\mathbb{R}^d)}\to 0, \; \text{as } n\rightarrow\infty.
		\end{aligned}
		\end{equation*}
The proof is complete.
		\end{proof}

		\subsection{Nonlocal model with general boudary term}
		\label{subsec:proof3}
		The proof of \cref{thm:gamma-con} allows us to extract the requirements for the boundary terms in the nonlocal model. To fulfill the liminf inequality, we need $\{\tilde{u}_n\}$ satisfy the condition of \cref{lem:trace}. Among them, the part involved in the boundary term is that $\lVert\tilde{u}_n\rVert_{L^2(\partial\Omega)}$ should tend to $0$ as $n\rightarrow\infty$. This is the reason why we require coercivity presented in \autoref{thm:gamma-con-general}. Regarding the limsup inequality, the construction in \cref{lem:limsup} can be employed to maintain the boundary term at $0$ as long as the kernel has a gradually shrinking compact support.
		\begin{proof}[Proof of \cref{thm:gamma-con-general}]
{\em The liminf inequality}. From \cref{lem:gamma-con}, we only need to demonstrate that $Tu\equiv 0$ on $\partial\Omega$ with $u_n\rightarrow u$ in $L^2$ norm and
			\begin{equation*}
				\begin{aligned}
					\liminf_{n\rightarrow\infty} F_n(u_n)<\infty.
				\end{aligned}
			\end{equation*}
			The coercivity of boundary term $E(u,0)$ derives that there exists a kernel $K$ satisfying (K1)-(K3) such that
			\begin{equation*}
				\begin{aligned}
					\lim_{n\rightarrow\infty}\lVert\tilde{u}_n\rVert_{L^2(\partial\Omega)}=0
				\end{aligned}
			\end{equation*}
			up to a subsequence. $\tilde{u}_n\rightarrow u$ in $L^2$ norm with \cref{lem:fou}. And \cref{lem:L2-estimation} gives the uniform boundedness of $\lVert\tilde{u}_n\rVert_{H^1(\Omega)}$. Therefore, using \cref{lem:trace}, the proof of liminf inequality is completed.\\
	
 {\em The limsup inequality}. With the similar method in \cref{lem:limsup}, we can suppose $F(u)<\infty$ and select $\{u_n\}$ as a sequence of $C^\infty$ approximations converging to $u$ with respect to $H^1$ norm. Moreover we require $u_n\equiv 0$ in $\Omega_{c_n}^c$, where $\Omega_{c_n}\coloneqq\{x\in\Omega\big|\text{dist}(x,\Omega^c)\geq c_n\}$. Such property of $\{u_n\}$ leads to the vanishing of boudary term $E_n(u,0)$ because the intersection of the support sets of the kernel $\rho_{\delta_n,x}$ and $u_n$ is empty. As for the first term, using \cref{lem:taylor}, we have
			\begin{equation*}
				\begin{aligned}
					\frac{1}{\delta_n^2}\int_\Omega\int_\Omega R_{\delta_n}(|x-y|)|u_n(x)-u_n(y)|^2dxdy\leq\sigma_R\lVert\nabla u_n(z)\rVert_{L^2(\Omega)}^2
				\end{aligned}
			\end{equation*}
   for all $n$.
			Hence,
			\begin{equation*}
				\begin{aligned}
					&\limsup_{n\rightarrow\infty}F_n(u_n)\\
					&\quad				=\limsup_{n\rightarrow\infty}\frac{1}{\delta_n^2}\int_\Omega\int_\Omega R_{\delta_n}(|x-y|)|u_n(x)-u_n(y)|^2dxdy\\
&\quad					\leq\limsup_{n\rightarrow\infty}\sigma_R\lVert\nabla u_n\rVert_{L^2(\Omega)}^2	=\sigma_R\lVert\nabla u\rVert_{L^2(\Omega)}^2
					=F(u).
				\end{aligned}
			\end{equation*}
		\end{proof}
		
		The construction in \cref{thm:gamma-con} is not the only one that fulfills our sufficient conditions. Another 
 example is, as we have mentioned in \cref{remark1}, selecting $\rho_{\delta_n,x}(y,z)=\frac{1}{\delta_n^2} K_{\delta_n}(|x-y|)\delta(|x-y|)$. In this case, our nonlocal functional is formulated as
		\begin{equation}
			\label{eq:dis-fun-cor}
			\begin{aligned}
				F_n(u)&=\frac{1}{\delta_n^2}\int_\Omega\int_\Omega R_{\delta_n}(|x-y|)|u(x)-u(y)|^2dxdy\\
				&+\frac{1}{\delta_n^2}\int_{\partial\Omega}\int_\Omega K_{\delta_n}(|x-y|)u(y)^2dydx.\\
			\end{aligned}
		\end{equation}
		and we have the following corollary due to \cref{thm:gamma-con-general}:
		\begin{corollary}
			Suppose that $\Omega$ is a Lipschitz bounded domain in $\mathbb{R}^d$. $R$ is a kernel satisfying (K1)-(K3). $K$ is a non-negative compactly supported kernel with a uniform positive lower bound in a neighborhood of the origin. Then, we have
			\[F_n\stackrel{\varGamma}{\longrightarrow}\sigma_RF\quad\text{in }L^2(\Omega),\]
			where $F_n,F$ are defined as \eqref{eq:dis-fun-cor},\eqref{eq:con-fun-special}.
		\end{corollary}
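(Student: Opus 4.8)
The plan is to read off the Corollary from \cref{thm:gamma-con-general}. The boundary term of \eqref{eq:dis-fun-cor} is exactly the functional $E_n(\cdot,0)$ of \eqref{eq:boudary-term} associated with the singular kernel
\[
\rho_{\delta_n,x}(y,z)=\frac{1}{\delta_n^2}K_{\delta_n}(|x-y|)\,\delta(|y-z|),
\]
where $\delta(|y-z|)$ is the Dirac mass on the diagonal $\{y=z\}$, so that $\int_\Omega\int_\Omega\rho_{\delta_n,x}(y,z)u(y)u(z)\,dy\,dz=\frac{1}{\delta_n^2}\int_\Omega K_{\delta_n}(|x-y|)u(y)^2\,dy$. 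This $\rho_{\delta_n,x}$ is symmetric in $y,z$, so it remains only to check the two hypotheses of \cref{thm:gamma-con-general}, namely the compact-support condition and the coercivity condition; everything else (in particular the $R$-term) is already covered by that theorem.

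The compact-support condition is immediate. Since $K$ is compactly supported, say $\mathrm{supp}(K)\subset[0,\tau]$ with $\tau>0$, the scaled kernel $K_{\delta_n}(|x-y|)=\delta_n^{-d}K(|x-y|^2/\delta_n^2)$ vanishes unless $|x-y|\le\sqrt{\tau}\,\delta_n$, while the Dirac factor forces $z=y$ and hence $|x-z|=|x-y|$. Thus $\rho_{\delta_n,x}$ is nonzero only when $|x-y|,|x-z|\le c_n$ with $c_n:=\sqrt{\tau}\,\delta_n\to 0$.

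The coercivity condition is the heart of the matter: we must exhibit a kernel $\hat K$ satisfying (K1)-(K3) and constants $C_n\to 0$ with $C_nE_n(u_n,0)\ge\lVert\tilde u_n\rVert_{L^2(\partial\Omega)}^2$ for all $u_n\in L^2(\Omega)$, where $\tilde u_n$ is the mollification \eqref{eq:tilde-u} built from $\hat K$. By hypothesis there are $\varepsilon>0$ and $k_0>0$ with $K\ge k_0$ on $[0,\varepsilon]$; fix any nonzero $\hat K$ satisfying (K1)-(K3) with $\mathrm{supp}(\hat K)\subset[0,\varepsilon]$ (for instance a $C^1$, nonincreasing cutoff supported in $[0,\varepsilon]$). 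Two pointwise facts then drive the estimate. First, $\omega_{\delta_n}(x)=\int_\Omega\hat K_{\delta_n}(|x-y|)\,dy\ge\omega_0>0$ uniformly in $x\in\overline\Omega$ and $n$, by the uniform interior cone (measure-density) property of Lipschitz domains together with the positive lower bound of the continuous nonincreasing $\hat K$ near the origin. Second, whenever $\hat K_{\delta_n}(s)\ne 0$ we have $s^2/\delta_n^2\le\varepsilon$, hence $K_{\delta_n}(s)\ge k_0\delta_n^{-d}$, so
\[
\hat K_{\delta_n}(s)\le\frac{\lVert\hat K\rVert_{L^\infty}}{\delta_n^d}\le\frac{\lVert\hat K\rVert_{L^\infty}}{k_0}K_{\delta_n}(s)\qquad\text{for all }s\ge 0.
\]
Applying Jensen's inequality to the probability measure $\omega_{\delta_n}(x)^{-1}\hat K_{\delta_n}(|x-\cdot|)\mathbf 1_\Omega\,dy$ gives $|\tilde u_n(x)|^2\le\omega_{\delta_n}(x)^{-1}\int_\Omega\hat K_{\delta_n}(|x-y|)u_n(y)^2\,dy$; integrating over $\partial\Omega$ and inserting the two bounds above yields
\[
\lVert\tilde u_n\rVert_{L^2(\partial\Omega)}^2\le\frac{\lVert\hat K\rVert_{L^\infty}}{k_0\omega_0}\int_{\partial\Omega}\int_\Omega K_{\delta_n}(|x-y|)u_n(y)^2\,dy\,dx=\frac{\lVert\hat K\rVert_{L^\infty}}{k_0\omega_0}\,\delta_n^2\,E_n(u_n,0).
\]
Hence the coercivity holds with $C_n:=\frac{\lVert\hat K\rVert_{L^\infty}}{k_0\omega_0}\delta_n^2\to 0$, and \cref{thm:gamma-con-general} gives $F_n\stackrel{\varGamma}{\longrightarrow}\sigma_RF$ in $L^2(\Omega)$.

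I expect the coercivity step to be the only delicate part, and within it the key idea is to choose the mollifier $\hat K$ supported inside the region $\{s\le\varepsilon\}$ where $K$ stays bounded below: this is precisely what turns the penalty $E_n$ (which controls $u_n$ only on the set where $K_{\delta_n}\ne 0$) into control of the boundary values of the averaged function $\tilde u_n$. The uniform lower bound of $\omega_{\delta_n}$ on $\partial\Omega$ is standard for Lipschitz domains and was already noted in the text. With these two ingredients the Corollary is a direct specialization of \cref{thm:gamma-con-general}.
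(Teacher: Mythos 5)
Your proof is correct and follows essentially the same route as the paper: reduce the Corollary to \cref{thm:gamma-con-general} by exhibiting $\rho_{\delta_n,x}(y,z)=\frac{1}{\delta_n^2}K_{\delta_n}(|x-y|)\delta(|y-z|)$, check the compact-support condition, and establish coercivity by constructing a mollifier $\hat K$ supported inside the region where $K$ is bounded below. The only cosmetic difference is that the paper builds an explicit quadratic cutoff with $\hat K\le K$ pointwise and applies Cauchy--Schwarz, whereas you take an arbitrary $C^1$ nonincreasing cutoff supported in $[0,\varepsilon]$, derive the scaled bound $\hat K_{\delta_n}\le \frac{\lVert\hat K\rVert_\infty}{k_0}K_{\delta_n}$, and apply Jensen; the two are equivalent, and your version has the minor advantage of tracking the normalization factor $\omega_{\delta_n}$ explicitly via its uniform lower bound, which the paper glosses over.
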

		\begin{proof}
			$\rho_{\delta_n,x}$ has a shrinking compact support since 
			\gwy{\begin{equation*}
				\begin{aligned}
					\rho_{\delta_n,x}(y,z)=\frac{1}{\delta_n^2} K_{\delta_n}(|x-y|)\delta(|y-z|)
				\end{aligned}
			\end{equation*}}
			and the kernel $K$ is compactly supported.
			
			Note that $K$ is non-negative and there exists constants $c_1,\ c_2>0$ such that $K(s)<c_1$ for all $s\in[0,c_2]$. Define a new kernel $\hat{K}:[0,\infty)\rightarrow[0,\infty)$,
			\begin{equation*}
				\begin{aligned}
					\hat{K}(s)\coloneqq\left\{\begin{array}{cc}
						\frac{c_1}{c_2^2}(s-c_2)^2&\text{if }s\leq c_2,\\
						0&\text{if }s> c_2.
					\end{array}\right.
				\end{aligned}
			\end{equation*}
			It can be easily verified that $\hat{K}$ satisfies (K1)-(K3) and $K(s)\geq\hat{K}(s)$ for all $s\geq 0$. Hence, 
  with $\hat{K}_\delta$ being the rescaled kernel defined by $\hat{K}$, we have
\begin{equation*}
				\begin{aligned}
					E_n(u,0)&=\frac{1}{\delta_n^2}\int_{\partial\Omega}\int_\Omega K_{\delta_n}(|x-y|)u(y)^2dydx\\
					&\geq\frac{1}{\delta_n^2}\int_{\partial\Omega}\int_\Omega \hat{K}_{\delta_n}(|x-y|)u(y)^2dydx\\
					&\geq\frac{C}{\delta_n^2}\int_{\partial\Omega}\int_\Omega \hat{K}_{\delta_n}(|x-y|)dy\int_\Omega \hat{K}_{\delta_n}(|x-y|)u(y)^2dydx\\
					&\geq \frac{C}{\delta_n^2}\int_{\partial\Omega}\left(\int_\Omega \hat{K}_{\delta_n}(|x-y|)
     u(y)dy\right)^2dx
				\end{aligned}
			\end{equation*}
			which means that the functional $E_n$ associated with $\rho_{\delta_n,x}$ is coercive as defined in \cref{thm:gamma-con-general} for $C_n=\frac{\delta_n^2}{C}$ and
   \[\tilde{u}_n=\frac{1}{w_{\delta_n}(x)}\int_\Omega \hat{K}_{\delta_n}(|x-y|)
     u(y)dy.\]
     Therefore, with \cref{thm:gamma-con-general}, $\{F_n\}$ $\varGamma$-converges to $F$.
		\end{proof}

	\section{Compactness}
	In this section, we demonstrate that any minimizing sequence of the nonlocal functionals $\{F_n\}$ defined as \eqref{eq:dis-fun} or \eqref{eq:dis-fun-general}, $\{F_{e,n}^k\}$ defined as \eqref{eq:eigen-dis-fun} and $\{\tilde{F}_{e,n}^k\}$ defined as \eqref{eq:normal-dis-fun} is relatively compact in $L^p(\Omega)$. 
In the literature,  such kind of compactness result has been studied for nonlocal functions using the techniques developed in \cite{BBM01,ponce2004new}. For smooth kernels, one can also directly work the compactness of the mollified sequences, see  for example \cite{scott2023nonlocal}. In our case, this corresponds to the use of mollification given by
 \eqref{eq:tilde-u}.
 Note that if $\{u_n\}$ is a minimizing sequence of $\{F_n\}$, then $\sup_n\{F_n(u_n)\}<\infty$. Hence it is sufficient to show that $\{u_n\}$ is relatively compact if one of the following three conditions holds: $\sup_n\{F_n(u_n)\}<\infty$, $\sup_n\{F_{e,n}^k(u_n)\}<\infty$ and $\sup_n\{\tilde{F}_{e,n}^k(u_n)\}<\infty$. Recall that $F_{e,n}^k$ and $\tilde{F}_{e,n}^k$ are $F_n$ with additional constraints added, which means that $F_{e,n}^k(u),\ \tilde{F}_{e,n}^k(u)\geq F_n(u)$ for all $u\in L^2(\Omega)$. As for $F_n$, the relative compactness can be derived mainly by the interior term of the nonlocal functional defined as follow:
	\label{sec:compact}
	\begin{equation}
		\label{eq:func-interior}
		\begin{aligned}
			F_n^i=\frac{1}{\delta_n}\left(\int_\Omega\int_\Omega R_{\delta_n}(|x-y|)|u(x)-u(y)|^pdxdy\right)^\frac{1}{p}
		\end{aligned}
	\end{equation}
	\begin{lemma}
	    \label{lem:compactness}
		Suppose that $\Omega$ is a Lipschitz bounded domain in $\mathbb{R}^d$. $1<p<\infty$. $R$ is a kernel satisfying (K1)-(K3). $\{\delta_n\}$ is a sequence of positive constants tending to $0$ as $n\rightarrow\infty$. $\{u_n\}$ is a bounded sequence in $L^p(\Omega)$ and satisfies
		\[\sup_nF_n^i(u_n)<\infty\]
		where $F_n^i$ is defined as \eqref{eq:func-interior}. Then $\{u_n\}$ is a relatively compact sequence in $L^p(\Omega)$.
	\end{lemma}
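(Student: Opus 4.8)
The plan is to follow the mollification strategy already signalled in the text: replace $\{u_n\}$ by the regularized sequence $\{\tilde u_n\}$ of the form \eqref{eq:tilde-u}, show that $\{\tilde u_n\}$ is bounded in $W^{1,p}(\Omega)$ while $\|\tilde u_n - u_n\|_{L^p(\Omega)}\to 0$, and then invoke the Rellich--Kondrachov compact embedding $W^{1,p}(\Omega)\hookrightarrow L^p(\Omega)$, which is available because $\Omega$ is a bounded Lipschitz domain. Since relative compactness of $\{\tilde u_n\}$ in $L^p$ together with $u_n-\tilde u_n\to 0$ in $L^p$ forces relative compactness of $\{u_n\}$, this yields the lemma. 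Concretely, I would run the construction \eqref{eq:tilde-u} with the choice $\hat K = R$ (which satisfies (K1)--(K3)), so that no new kernel must be introduced and $\omega_{\delta_n}$ has uniform positive upper and lower bounds as noted after \eqref{eq:tilde-u}.

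The first step is the $W^{1,p}$ bound on $\{\tilde u_n\}$. The $L^p$ part is immediate from Jensen's inequality and the uniform bound on $\|u_n\|_{L^p(\Omega)}$, giving $\|\tilde u_n\|_{L^p(\Omega)}\le C\|u_n\|_{L^p(\Omega)}\le C$. For the gradient, \cref{lem:L2-estimation} (whose statement and proof are valid for every $1<p<\infty$) gives
\[
\|\nabla\tilde u_n\|_{L^p(\Omega)}\le \frac{C}{\delta_n}\left(\int_\Omega\int_\Omega R_{\delta_n}(|x-y|)|u_n(x)-u_n(y)|^p\,dx\,dy\right)^{1/p}=C\,F_n^i(u_n)\le C,
\]
using the hypothesis $\sup_n F_n^i(u_n)<\infty$. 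Hence $\sup_n\|\tilde u_n\|_{W^{1,p}(\Omega)}<\infty$, and $\{\tilde u_n\}$ is relatively compact in $L^p(\Omega)$ by Rellich--Kondrachov.

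The second step is the $L^p$-closeness estimate. From \eqref{eq:tilde-u} with $\hat K=R$,
\[
\tilde u_n(x)-u_n(x)=\frac{1}{\omega_{\delta_n}(x)}\int_\Omega R_{\delta_n}(|x-y|)\bigl(u_n(y)-u_n(x)\bigr)\,dy,
\]
so applying Jensen's inequality to the probability measure $R_{\delta_n}(|x-y|)\,dy/\omega_{\delta_n}(x)$ and then using the uniform positive lower bound of $\omega_{\delta_n}$,
\[
\|\tilde u_n-u_n\|_{L^p(\Omega)}^p\le C\int_\Omega\int_\Omega R_{\delta_n}(|x-y|)|u_n(x)-u_n(y)|^p\,dx\,dy=C\,\delta_n^p\,F_n^i(u_n)^p\longrightarrow 0,
\]
since $\delta_n\to 0$ and $F_n^i(u_n)$ stays bounded. (If one wishes to keep $\hat K$ general rather than $\hat K=R$, this last line instead uses the pointwise comparison $\hat K_{\delta_n}(s)\le C\,R_{c\delta_n}(s)$ for a fixed $c$, obtained exactly as in the proof of \cref{lem:L2-estimation} from the boundedness of $\hat K$ and a positive lower bound of $R$ near the origin, followed by \cref{lem:kernel_diff_scales}.) Combining the two steps: given any subsequence of $\{u_n\}$, extract a further subsequence along which $\tilde u_{n_k}\to w$ in $L^p(\Omega)$; then $u_{n_k}=\tilde u_{n_k}+(u_{n_k}-\tilde u_{n_k})\to w$ in $L^p(\Omega)$. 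Hence $\{u_n\}$ is relatively compact in $L^p(\Omega)$.

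The only genuinely delicate point is the closeness estimate $\|\tilde u_n-u_n\|_{L^p}\to 0$: it is precisely here that one must use both $\delta_n\to 0$ and the compatibility between the mollifying kernel and $R$, so that the mollification error is controlled by the very energy $F_n^i$ that is assumed bounded. Once $\hat K=R$ is chosen this becomes transparent; with a general $\hat K$ the kernel-rescaling lemma \cref{lem:kernel_diff_scales} does the bookkeeping. The $W^{1,p}$ bound is a direct consequence of \cref{lem:L2-estimation}, and the final passage to compactness is then routine.
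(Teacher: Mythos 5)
Your proof follows the same overall strategy as the paper: mollify $u_n$ to $\tilde u_n$ via \eqref{eq:tilde-u}, obtain a uniform $W^{1,p}$ bound from \cref{lem:L2-estimation}, invoke Rellich--Kondrachov, and transfer compactness back to $\{u_n\}$ by showing $\|\tilde u_n-u_n\|_{L^p(\Omega)}\to 0$. The one place where you genuinely diverge is the closeness estimate. The paper cites \cref{lem:fou} for $\|\tilde u_n-u_n\|_{L^p(\Omega)}\to 0$, but \cref{lem:fou} as stated assumes $u_n\to u$ in $L^p(\Omega)$, which is not among the hypotheses of this lemma (it is precisely what compactness is supposed to produce). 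Your direct Jensen-type bound
\[
\|\tilde u_n-u_n\|_{L^p(\Omega)}^p\le C\int_\Omega\int_\Omega R_{\delta_n}(|x-y|)\,|u_n(x)-u_n(y)|^p\,dx\,dy=C\,\delta_n^p\,F_n^i(u_n)^p\to 0
\]
uses only $\delta_n\to 0$ and $\sup_n F_n^i(u_n)<\infty$, so it is self-contained and removes that circularity; taking $\hat K=R$ (or the kernel comparison via \cref{lem:kernel_diff_scales}) keeps the bookkeeping clean. In short, your proof is correct, matches the paper's route, and tightens the one step whose justification in the paper is imprecise.
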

	\begin{proof}
		Recalling $\tilde{u}_n$ defined as \eqref{eq:tilde-u}, we have shown that $\lVert\tilde{u}_n-u_n\rVert_{L^p(\Omega)}\rightarrow 0$ as $n\rightarrow\infty$ in \cref{lem:fou}. Hence, $\{\tilde{u}_n\}$ is also bounded in $L^p(\Omega)$. With the condition $\sup_nF_n^i(u_n)<\infty$ and \cref{lem:L2-estimation}, $\{\tilde{u}_n\}$ is actually bounded in $W^{1,p}(\Omega)$. Hence, $\{\tilde{u}_n\}$ is relatively compact in $L^p(\Omega)$ due to the Rellich-Kondrachov theorem (see, for example,  theorem 6.3 in \cite{adams2003sobolev}). So is $\{u_n\}$, because $\{\tilde{u}_n\}$ and $\{u_n\}$ are asymptotically approximated in $L^p(\Omega)$. 
	\end{proof}
	
	Note that $\{u_n\}$ is required to be bounded in \cref{lem:compactness}. We claim that this requirement can be deduced by the boundedness of $\{F_n(u_n)\}$ and no additional assumptions are needed. Recall the Poincar\'e inequality (see, for example, in section 5.8.1 of \cite{Evans}). Let $\Omega$ be a Lipschitz bounded domain in $\mathbb{R}^d$, $1\leq p\leq\infty$. Then there exist a constant $C$, depending only on $d,p$ and $\Omega$, such that
	\begin{equation}
	    \label{eq:poincare}
	    \lVert u-(u)_\Omega\rVert_{L^p(\Omega)}\leq C\lVert \nabla u\rVert_{L^p(\Omega)}
	\end{equation}
	for each function $u\in W^{1,p}(\Omega)$ and $(u)_\Omega\coloneqq\frac{1}{|\Omega|}\int_\Omega u dx$ is the average of u over $\Omega$. In the nonloacal model $F_n$ defined as \eqref{eq:dis-fun}, the interior term and the boundary term can be consider as the approximation of $\lVert\nabla u\rVert_{L^p(\Omega)}$ and $\lVert u\rVert_{L^p(\partial\Omega)}^p$ respectively. Hence, with the aid of Poincar\'e inequality, we can establish its nonlocal counterpart as the following lemma.
    \begin{lemma}
        \label{lem:poincare}
        Suppose that $\Omega$ is a Lipschitz bounded domain in $\mathbb{R}^d$. $1<p<\infty$. $K,R$ are two kernel functions satisfying (K1)-(K3). $\{\delta_n\}$ is a sequence of positive constants tending to 0 as $n\rightarrow\infty$. $\{u_n\}\subset L^p(\Omega)$ is a sequence satisfying $\sup_n F_n(u
        _n)<\infty$ where $F_n$ is defined as \eqref{eq:dis-fun}. Then
        \gwy{\[\sup_n \lVert u_n\rVert_{L^p(\Omega)}<\infty.\]}
    \end{lemma}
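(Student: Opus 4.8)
The plan is to transfer the bound $\sup_n F_n(u_n)<\infty$ to the mollified sequence $\{\tilde u_n\}$ obtained from \eqref{eq:tilde-u} with the choice $\hat K=K$ (the boundary kernel), then to show that $\{\tilde u_n\}$ is bounded in $W^{1,p}(\Omega)$ by combining the interior estimate of \cref{lem:L2-estimation} with a Poincar\'e inequality in which the boundary trace plays the role of the mean value, and finally to pass back from $\tilde u_n$ to $u_n$.

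First, from $\sup_n F_n(u_n)<\infty$ we record that both
\begin{equation*}
\sup_n\frac{1}{\delta_n^p}\int_\Omega\int_\Omega R_{\delta_n}(|x-y|)|u_n(x)-u_n(y)|^p\,dx\,dy<\infty
\end{equation*}
and $\sup_n\int_{\partial\Omega}\bigl|\frac{1}{\delta_n}\int_\Omega K_{\delta_n}(|x-y|)(a(x)-u_n(y))\,dy\bigr|^p\,dx<\infty$. Set $\tilde u_n(x):=\frac{1}{\omega_{\delta_n}(x)}\int_\Omega K_{\delta_n}(|x-y|)u_n(y)\,dy$ with $\omega_{\delta_n}(x):=\int_\Omega K_{\delta_n}(|x-y|)\,dy$, so that $\tilde u_n\in W^{1,p}(\Omega)\cap C(\bar\Omega)$ for each fixed $n$ (the denominator is continuous and bounded below on $\bar\Omega$ since $\Omega$ is Lipschitz). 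Applying \cref{lem:L2-estimation} with $\hat K=K$, the first bound gives $\sup_n\|\nabla\tilde u_n\|_{L^p(\Omega)}<\infty$; note this step uses no control on $\|u_n\|_{L^p(\Omega)}$.

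Next comes the trace estimate, which carries the real content. For $x\in\partial\Omega$ one computes
\begin{equation*}
\frac{1}{\delta_n}\int_\Omega K_{\delta_n}(|x-y|)(a(x)-u_n(y))\,dy=\frac{\omega_{\delta_n}(x)}{\delta_n}\bigl(a(x)-\tilde u_n(x)\bigr),
\end{equation*}
and since $\omega_{\delta_n}$ has a uniform positive lower bound on $\partial\Omega$ (as noted after \eqref{eq:tilde-u}), the boundedness of the boundary term yields $\|a-\tilde u_n\|_{L^p(\partial\Omega)}\le C\delta_n$. As $\tilde u_n$ is continuous up to $\partial\Omega$, its trace equals its boundary restriction, whence $\|T\tilde u_n\|_{L^p(\partial\Omega)}\le\|a\|_{L^p(\partial\Omega)}+C\delta_n$, which is bounded because $a$ is the trace of a $W^{1,p}(\Omega)$ function and hence lies in $L^p(\partial\Omega)$.

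Then I invoke the Poincar\'e inequality with boundary data: for a Lipschitz bounded $\Omega$ and $1<p<\infty$ there is a constant $C$ with $\|w\|_{L^p(\Omega)}\le C(\|\nabla w\|_{L^p(\Omega)}+\|Tw\|_{L^p(\partial\Omega)})$ for all $w\in W^{1,p}(\Omega)$; this follows from \eqref{eq:poincare} by writing $w=(w-(w)_\Omega)+(w)_\Omega$, estimating $w-(w)_\Omega$ by its gradient and controlling $|(w)_\Omega|$ via continuity of the trace operator. Applied to $w=\tilde u_n$ it gives $\sup_n\|\tilde u_n\|_{L^p(\Omega)}<\infty$. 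Finally, from $u_n(x)-\tilde u_n(x)=\frac{1}{\omega_{\delta_n}(x)}\int_\Omega K_{\delta_n}(|x-y|)(u_n(x)-u_n(y))\,dy$, Jensen's inequality and the lower bound on $\omega_{\delta_n}$ give
\begin{equation*}
\|u_n-\tilde u_n\|_{L^p(\Omega)}^p\le C\int_\Omega\int_\Omega K_{\delta_n}(|x-y|)|u_n(x)-u_n(y)|^p\,dx\,dy,
\end{equation*}
and dominating $K_{\delta_n}$ by a constant multiple of a rescaled $R_{\delta_n}$ (as in the proof of \cref{lem:L2-estimation}, using that $R$ has a positive lower bound near the origin) together with \cref{lem:kernel_diff_scales} bounds the right-hand side by $C\delta_n^p\cdot O(1)\to0$. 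Hence $\|u_n\|_{L^p(\Omega)}\le\|\tilde u_n\|_{L^p(\Omega)}+\|u_n-\tilde u_n\|_{L^p(\Omega)}$ is uniformly bounded, which is the claim. The main obstacle is precisely the trace estimate above: converting the $\delta_n^{-1}$-weighted penalty into a uniform $L^p(\partial\Omega)$ bound on the mollified trace, which is where the particular $\delta_n^{-1}$ scaling of the boundary term in \eqref{eq:dis-fun} and the positivity of $\omega_{\delta_n}$ on $\partial\Omega$ are essential; the remaining ingredients are standard or already available in the paper.
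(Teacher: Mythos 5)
Your proof is correct and follows the same strategy as the paper: mollify $u_n$ to $\tilde u_n$, bound $\|\nabla\tilde u_n\|_{L^p(\Omega)}$ via \cref{lem:L2-estimation}, bound the boundary trace via the penalty term, apply a Poincar\'e inequality of the form $\|w\|_{L^p(\Omega)}\le C(\|\nabla w\|_{L^p(\Omega)}+\|Tw\|_{L^p(\partial\Omega)})$, and pass back from $\tilde u_n$ to $u_n$. You make two points more explicit than the paper: first, that one must take $\hat K=K$ so that the boundary penalty literally becomes $\frac{\omega_{\delta_n}}{\delta_n}(a-\tilde u_n)$ on $\partial\Omega$ and hence controls the trace of $\tilde u_n$; second, and more substantively, you justify $\|u_n-\tilde u_n\|_{L^p(\Omega)}\to 0$ directly from the uniform bound on the nonlocal interior energy (Jensen plus domination of $K_{\delta_n}$ by a rescaled $R_{\delta_n}$ and \cref{lem:kernel_diff_scales}), whereas the paper cites ``the proof of \cref{lem:fou}'' for this step even though that lemma assumes $u_n\to u$ in $L^p$ --- information not yet available when proving boundedness. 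Your direct argument closes that gap cleanly; otherwise the two proofs are the same.
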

	\begin{proof}
	    \gwy{Define \begin{equation*}
			 \tilde{u}_n (x)\coloneqq \frac{1}{\omega_{\delta_n}(x)}\int_\Omega \hat{K}_{\delta_n}(|x-y|)u_n(y)dy, \; \text{ and }\;\omega_{\delta_n}(x)\coloneqq \int_\Omega \hat{K}_{\delta_n}(|x-y|)dy.
		  \end{equation*}}  
     With \cref{lem:L2-estimation} and the condition that $\sup_n F_n(u_n)<\infty$, 
     we have
   $\sup_n 
     \lVert \nabla \tilde{u}_n\rVert_{L^p(\Omega)}$\\
     $<\infty$
     and $\sup_n 
     \lVert \tilde{u}_n\rVert_{L^p(\partial\Omega)}<\infty$.
     By the classical Poincar\'e inequality,
    there is a constant
$C_1(p,d,\Omega)>0$ such that
    \begin{equation*}
	            \lVert u\rVert_{L^p(\Omega)}\leq  C_1(p,d,\Omega)(\lVert \nabla u\rVert_{L^p(\Omega)}+ \lVert u\rVert_{L^p(\partial\Omega)}
             ),\;
             \forall u\in W^{1,p}(\Omega).
	    \end{equation*}
    So 
     $\sup_n 
     \lVert \tilde{u}_n \rVert_{L^p(\Omega)} <\infty$. Meanwhile, as $n\to \infty$, we have $
       \lVert \tilde{u}_n -u_n \rVert_{L^p(\Omega)}\to 0$  by the proof  of \cref{lem:fou},. Hence, we have the desired uniform bound of 
     $\{\lVert u_n \rVert_{L^p(\Omega)}\}$.
	\end{proof}

	For $F_n$ defined as \eqref{eq:dis-fun-general}, we can obtain the boundedness of $\{u_n\}$ with the same method in \cref{lem:poincare} under the coercivity assumption. With the compactness result above, \cref{thm:gamma-con-minimizers,thm:eigenvalue-minimizers,thm:gamma-con-general-minimizers} can be demonstrated with the $\varGamma$-convergence \cref{thm:gamma-con,thm:eigenvalue,thm:gamma-con-general} and its property \cref{lem:minimizer}.
	
	\section{Conclusion}
	\label{sec:conclusion}
	In this paper, we propose a penalty formulation for some variational nonlocal Dirichlet problems.
Sufficient conditions for the boundary terms of these models to ensure $\varGamma$-convergence are presented. Based on this work, there are several aspects for future research. Firstly, the coercivity proposed in \cref{thm:gamma-con-general}
 may be difficult to verify in certain situations. Alternative conditions that are 
more intuitive, albeit stronger, may be explored. The 
conditions studied in this work
are only sufficient, and it would be valuable to investigate necessary conditions as well. Furthermore, it is also worthy consideration to extend the study here to models such as biharmonic equations, Stokes systems, and other linear and nonlinear equations of broad interests.

	\bibliographystyle{siamplain} 
	\bibliography{references}

\end{document}